\DeclareFontFamily{U}{BOONDOX-calo}{\skewchar\font=45 }
\DeclareFontShape{U}{BOONDOX-calo}{m}{n}{
  <-> s*[1.05] BOONDOX-r-calo}{}
\DeclareFontShape{U}{BOONDOX-calo}{b}{n}{
  <-> s*[1.05] BOONDOX-b-calo}{}
\DeclareMathAlphabet{\mathcalb}{U}{BOONDOX-calo}{m}{n}
\SetMathAlphabet{\mathcalb}{bold}{U}{BOONDOX-calo}{b}{n}
\DeclareMathAlphabet{\mathbcalb}{U}{BOONDOX-calo}{b}{n}
\DeclareFontFamily{OT1}{pzc}{}
\DeclareFontShape{OT1}{pzc}{m}{it}{<-> s * [1.10] pzcmi7t}{}
\DeclareMathAlphabet{\mathcalc}{OT1}{pzc}{m}{it}
\numberwithin{equation}{section}
\newtheorem{theorem}{Theorem}[section]
\newtheorem{lemma}[theorem]{Lemma}
\newtheorem{proposition}[theorem]{Proposition}
\newtheorem{rem}[theorem]{Remark}
\DeclareMathOperator{\p}{\mathbb{P}}
\DeclareMathOperator{\sign}{\mathrm{sign}}
\newcommand{\ind}{\mathbf{1}}
\newcommand{\R}{\mathbb{R}}
\newcommand{\Z}{\mathbb{Z}}
\newcommand{\N}{\mathbb{N}}
\newcommand{\lbra}{\llbracket}
\newcommand{\rbra}{\rrbracket}
\newcommand{\pp}{\mathtt{p}_\gG}
\newcommand{\cB}{{\ensuremath{\mathcal B}} }
\newcommand{\bP}{{\ensuremath{\mathbf P}} }
\newcommand{\bE}{{\ensuremath{\mathbf E}} }
\DeclareMathSymbol{\leqslant}{\mathalpha}{AMSa}{"36} 
\DeclareMathSymbol{\geqslant}{\mathalpha}{AMSa}{"3E} 
\DeclareMathSymbol{\eset}{\mathalpha}{AMSb}{"3F}     
\renewcommand{\leq}{\;\leqslant\;}                   
\newcommand{\dd}{\,\text{\rm d}}             
\newcommand{\bbE}{{\ensuremath{\mathbb E}} }
\newcommand{\bbL}{{\ensuremath{\mathbb L}} }
\newcommand{\bbN}{{\ensuremath{\mathbb N}} }
\newcommand{\bbP}{{\ensuremath{\mathbb P}} }
\newcommand{\bbR}{{\ensuremath{\mathbb R}} }
\newcommand{\bbZ}{{\ensuremath{\mathbb Z}} }
\newcommand{\gb}{\beta}
\newcommand{\gd}{\delta}
\newcommand{\gep}{\varepsilon}       
\newcommand{\gG}{\Gamma}
\newcommand{\gD}{\Delta}
\newcommand{\gk}{\kappa}
\newcommand{\gO}{\Omega}
\newcommand{\gs}{\sigma}
\def\captionfont@{\footnotesize}
\def\captionheadfont@{\scshape}
\long\def\@makecaption#1#2{%
  \vspace{2mm}
  \setbox\@tempboxa\vbox{\color@setgroup
    \advance\hsize-6pc\noindent
    \captionfont@\captionheadfont@#1\@xp\@ifnotempty\@xp
        {\@cdr#2\@nil}{.\captionfont@\upshape\enspace#2}%
    \unskip\kern-6pc\par
    \global\setbox\@ne\lastbox\color@endgroup}%
  \ifhbox\@ne 
    \setbox\@ne\hbox{\unhbox\@ne\unskip\unskip\unpenalty\unkern}%
  \fi
  \ifdim\wd\@tempboxa=\z@ 
    \setbox\@ne\hbox to\columnwidth{\hss\kern-6pc\box\@ne\hss}%
  \else 
    \setbox\@ne\vbox{\unvbox\@tempboxa\parskip\z@skip
        \noindent\unhbox\@ne\advance\hsize-6pc\par}%
\fi
  \ifnum\@tempcnta<64 
    \addvspace\abovecaptionskip
    \moveright 3pc\box\@ne
  \else 
    \moveright 3pc\box\@ne
    \nobreak
    \vskip\belowcaptionskip
  \fi
\relax
}
\def\writefig#1 #2 #3 {\rlap{\kern #1 truecm
\raise #2 truecm \hbox{#3}}}
\def\e{{\mathbb E}}
\def\p{{\mathbb P}}
\def\N{{\mathbb N}}
\def\ss{{\tt u}}
\def\td{{\tt t}^\downarrow}
\def\tup{{\tt t}^\uparrow}
\newcommand{\ttt}{\mathtt{t}}
\newcommand{\ttdown}{\mathtt{t}^\downarrow}
\newcommand{\ttup}{\mathtt{t}^\uparrow}
\newcommand{\tu}{\mathtt{u}}
\newcommand{\tuplus}{\mathtt{u}^+}
\newcommand{\tU}{\mathtt{U}}
\newcommand{\tudown}{\mathtt{u}^\downarrow}
\newcommand{\tuup}{\mathtt{u}^\uparrow}
\newcommand{\ts}{\mathtt{s}}
\newcommand{\tsdown}{\mathtt{s}^\downarrow}
\newcommand{\tsup}{\mathtt{s}^\uparrow}
\newcommand{\tvdown}{\mathtt{v}^\downarrow}
\newcommand{\Srev}{S\mystrut^{\scaleto{\mathtt{rv}\mathstrut}{5pt}}}
\newcommand{\tf}{\textsc{f}}
\newcommand{\ls}{\widehat{l}}
\newcommand{\rs}{\widehat{r}}
\newcommand{\supa}{s^\uparrow}
\newcommand{\sd}{s^\downarrow}
\newcommand{\ms}{\widehat{m}}
\newcommand{\loglog}{\log_{\circ 2}}
\newcommand\mystrut{\rule{0pt}{6pt}}
\newcommand{\tsp}{\mathtt{t}}
\renewcommand{\ell}{\mathcalb{l}}
\newcommand{\err}{\mathcalc{r}}
\begin{document}

\title[]{The random field Ising chain  domain-wall structure in the large interaction limit }

\author[O. Collin, G. Giacomin and Y. Hu]{Orph\'ee Collin, Giambattista Giacomin and Yueyun Hu}
\address[OC, GG]{Universit\'e  Paris Cit\'e,  Laboratoire de Probabilit{\'e}s, Statistiques  et Mod\'elisation, UMR 8001,
            F-75205 Paris, France}
    \address[YH]{Universit\'e Paris XIII, LAGA,  UMR 7539, Institut Galil\'ee, F-93430 Villetaneuse, France}

\begin{abstract}
We study the configurations of the nearest neighbor Ising ferromagnetic chain with IID centered and square integrable external random field in the limit 
in which the pairwise  interaction tends to infinity. The available free energy estimates for this model show a strong form of disorder relevance -- i.e., a strong effect of disorder on the free energy behavior -- and our aim is to make explicit how the disorder affects the spin configurations. 
We give a quantitative estimate that shows that the infinite volume spin configurations are 
 close to one explicit  disorder dependent configuration  when the interaction is large. Our results confirm predictions 
on this model obtained by D. S. Fisher and coauthors by applying the renormalization group method.
\bigskip

\noindent  \emph{AMS  subject classification (2010 MSC)}:
60K37,  
82B44, 
60K35, 
82B27  

\smallskip
\noindent
\emph{Keywords}: disordered systems,  transfer matrix method, random matrix products,  infinite disorder RG fixed point, random walk excursion theory

\bigskip

\centerline{This version:  \today}

\end{abstract}

\maketitle

\section{The model and the results}
\label{sec:intro}

\subsection{The Ising chain with disordered external field}
\label{sec:model}
For $N\in \bbN:=\{1,2, \ldots\}$ we introduce the partition function of the Ising chain of length $N$, interaction $J$ and external magnetization $h=(h_n)$
\begin{equation}
\label{eq:ZRFIC}
Z_{N, J, h}^{ab}\, =\, 
\sum_{\gs \in \{-1, +1\}^{\{1,2,\ldots, N\}}} \exp\left( H^{ab}_{N, J, h}(\gs)\right) \, ,
\end{equation}
where
\begin{equation}
\label{eq:HRFIC}
H^{ab}_{N, J, h}(\gs) \, :=\, 
J\sum_{n=1}^{N+1} \gs _{n-1} \gs_{n} + \sum_{n=1}^{N} h_n \gs_n\, ,
\end{equation}
with
$\gs_0=a$ and $\gs_{N+1}=b$ ($a,b \in\{ -, +\}\cong\{-1,+1\}$), and  $h=(h_n)$ is a sequence of real numbers. Our results will be  for the case in which  $(h_n)$ is a realization of an IID sequence of centered random variables with bounded non zero variance
\begin{equation}
\label{hyp-S} 
\bbE[h_1]\, =0  \  \text{ and } \vartheta^2\, :=\, \bbE\left[ h_1^2\right]  \in (0, \infty)\,.
\end{equation} 
In spite of the fact that for $(Z_{N, J, h})_{N \in \bbN}$ we only need     $(h_n)_{n \in \bbN}$, we will often deal with the Ising model on arbitrary subsets of $\bbZ$, so we consider
$(h_n)_{n \in \bbZ}$ from the start.

The Gibbs measure $\bP^{ab}_{N, J, h}$ associated to the partition function \eqref{sec:model} is the probability on $\{-1, +1\}^{\{1,\ldots, N\}}$ identified by the discrete density 
\begin{equation}
\label{eq:Gibbs}
\bP^{ab}_{N, J, h}\left( \gs \right) \, =\, \frac{\exp\left( H^{ab}_{N, J, h}(\gs)\right) }{Z_{N, J, h}^{ab}} \, . 
\end{equation} 
Of course a spin configuration $\gs \in \{-1,+1\}^{\{0,1,\ldots\}}$, given  $\gs_0$, can be encoded in the increasing sequence $(n_j)$ defined by $n_1:=
\inf\{n\in \bbN:\, \gs_n\neq \gs_0\}$ and, for $j>1$, $n_j:=\inf\{n > n_{j-1}:\, \gs_n \neq \gs_{n-1}\}$. Therefore one can see the spin configuration $\gs$ as a sequence of domains in which the  spins are constant. The boundaries (or walls) of the domains are at 
  $(n_j-1/2)_{j \in \bbN}$ and the sign of the spins switches crossing the domain walls. 

We aim at understanding the behavior of $\bP^{ab}_{N, J, h}$ in the thermodynamical limit   $N \to \infty$ when the interaction $J$ is large. 

For $J$ large one certainly expects that neighboring spins that have the same sign are greatly favored, so the observed trajectories should contain \emph{very long} domains. 

This is easily established when $\vartheta =0$, i.e. $h_n=0$ for every $n$. In fact in this case, see Remark~\ref{rem:h=0}, $(n_j)$ is a renewal process with geometric inter-arrival distribution: the expectation of the inter-arrival variable, i.e. the expected length of the domains, is $\exp(2J)+1$. 

What is expected in presence of centered disorder with finite variance agrees with the non disordered case only because the domain sizes  diverge with $J\to \infty$, but it is otherwise radically different. The \emph{Imry-Ma argument} -- by now a textbook  argument in physics explained in our context for example in  \cite[p.~373]{cf:IM}, but also in the introduction of \cite{cf:CGH1} --
 strongly suggests that 
\smallskip

\begin{enumerate}
\item the typical length of the domains diverges with $J$, but only like $J^2$;
\item the location of the walls between domains heavily depends on the disordered variables realization.
\end{enumerate}
\smallskip

In the physical literature the analysis of this problem has been pushed well beyond the Imry-Ma predictions: the key word here is Renormalization Group (RG) and
notably the substantial refinement by D. S. Fisher \cite{cf:F92,cf:F95}  of ideas initiated in particular by C. Dasgupta and S.-K. Ma \cite{DM}.
In the next subsection, \S~\!\ref{sec:Fisher}, we are going to give the definitions that are necessary to state our results: these definitions actually introduce some 
stochastic processes that can be directly related to the RG predictions. And our aim is proving a statement that shows that Fisher RG predictions \cite{cf:FLDM01}  do hold for the infinite volume ferromagnetic Ising chain at equilibrium with centered external field, in the limit in which the interaction tends to infinity.

\subsection{The Fisher domain-wall configuration}
\label{sec:Fisher}
Let us set 
\begin{equation}
\gG\, :=\, 2 J\, ,
\end{equation}
and let us introduce from now the two-sided random walk $(S_n)_{n \in \bbZ}$ defined by 
\begin{equation}
\label{eq:Sn} 
S_n\, := \begin{cases}
\sum_{j=1}^n h_j & \text{ if } n \in \bbN\, ,\\
0  & \text{ if } n=0\, ,\\
-\sum_{j=n+1}^0 h_j& \text{ if }  -n \in \bbN\, ,
\end{cases}
\end{equation}
even if the two-sided aspect will play an explicit role only from Section~\ref{sec:auxiliary}.
Let us introduce   also for $m\le n$
\begin{equation}
 S^\uparrow_{m,n}\,:=\,  \max_{m\le i\le j\le n} (S_j-S_i), \qquad S^\downarrow_{m,n}:= \max_{m\le i\le j\le n} (S_i-S_j)\,,  
 \end{equation}
 and for every $\gG>0$  we introduce the \emph{first time of $\gG$-decrease} 
\begin{equation}
 \label{eq:completenotation1}
 \ttt_1(\gG)\,:=\, \inf\left\{n > 0: S^\downarrow_{0,n} \ge \gG\right\} \, . 
 \end{equation}
 For almost all realizations of $(h_n)$ we have that $\limsup_n S_n=-\liminf_n S_n= \infty$: we assume that we work on such realizations,
so the infimum in \eqref{eq:completenotation1} can be replaced by a minimum and
$\ttt_1(\gG)<\infty$.
Next we introduce (with the notation $\lbra j, k\rbra:=[j, k] \cap \bbZ$ for the integers $j\le k$)
\begin{equation}
 \label{eq:completenotation2}
\tu_1(\gG)\,:=\, \min\left\{ n \in \lbra 0,  \ttt_1(\gG)\rbra: S_n= \max_{i \in \lbra 0, \ttt_1(\gG)\rbra} S_i\right\}\,,
\end{equation}
so $\tu_1(\gG)$ is the first time that $S$ reaches its maximum before having a decrease of at least $\gG$: 
this is what we call \emph{location of the first  $\gG$-maximum}.
Note however that $S$ may reach this maximum also at later times and before time $\ttt_1(\gG)$. In fact,
this happens with positive probability (at least for $\gG$ large) if and only 
if there exists a positive integer $n$ and real numbers $x_1,  \dots, x_n$ (not necessarily disctinct) which are atoms of the law of $h_1$ and such that $x_1+\dots+ x_n=0$. 
Therefore, we introduce also 
\begin{equation}
 \label{eq:completenotation2.1}
\tuplus_1(\gG)\,:=\, \max\left\{ n \in \lbra 0,  \ttt_1(\gG)\rbra:\,  S_n= \max_{i\in \lbra 0, \ttt_1(\gG)\rbra} S_i\right\}\,. 
\end{equation}
Now we proceed by looking for the first time of $\gG$-increase after $ \ttt_1(\gG)$
\begin{equation}
 \ttt_2(\gG)\,:=\, \min\left\{n> \ttt_1(\gG):\,  S^\uparrow_{\ttt_1(\gG),n} \ge \gG\right\} \, ,
 \end{equation}
 and we set 
 \begin{equation}
 \tu_2(\gG) \,:=\,  \min\left\{n\in \lbra \ttt_1(\gG), \ttt_2(\gG)\rbra:\,  S_n= \min_{i \in \lbra \ttt_1(\gG), \ttt_2(\gG)\rbra} S_i\right\}\,,
 \end{equation}
 and
 \begin{equation}
  \tuplus_2(\gG) \,:=\,  \max\left\{n \in \lbra \ttt_1(\gG), \ttt_2(\gG)\rbra:\,  S_n= \min_{i \in \lbra \ttt_1(\gG), \ttt_2(\gG)\rbra} S_i\right\}\, .
  \end{equation}
So  $\ttt_2(\gG)$ is the first time at which there is an increase of $S$  at least $\gG$ after the decrease time $\ttt_1(\gG)$.
And $ \tu_2(\gG)$, respectively $\tuplus_2(\gG)$, is the first (respectively last) absolute minimum of the walk between 
$\ttt_1(\gG)$ and $\ttt_2(\gG)$.

Now  (almost surely  in the realization of the $h$ sequence) we can  iterate this procedure to build the increasing sequences $(\ttt_j(\gG))_{j \in \bbN}$, 
$(\tu_j(\gG))_{j \in \bbN}$ and $(\tuplus_j(\gG))_{j \in \bbN}$ with 
\begin{equation}
\ttt_j(\gG) \, \le \tu_j(\gG)\, \le\,  \tuplus_j(\gG)\, < \, \ttt_{j+1}(\gG)\, ,
\end{equation} 
with  $\tu_j(\gG)$ and $ \tuplus_j(\gG)$ that are the locations of the (first and last) maxima (respectively minima) of $S$, more precisely the first and last $S$ absolute maxima in $[\ttt_j(\gG); \ttt_{j+1}]$,
if $j$ is odd (respectively even). Explicitly 
\begin{equation}
 \ttt_{j+1}(\gG)\,:=\begin{cases} 
 \min\big\{n> \ttt_j(\gG):\,  S^\downarrow_{\ttt_j(\gG),n} \ge \gG\big\} & \textrm{ if $j$ is even} \, ,
 \\
 \min\big\{n> \ttt_j(\gG):\,  S^\uparrow_{\ttt_j(\gG),n} \ge \gG\big\} & \textrm{ if $j$ is odd} \, ,
 \end{cases}
 \end{equation}
  \begin{equation}
  \label{eq:new-u}
 \tu_{j+1}(\gG) \,:=\begin{cases}  
 \min\left\{n \in \lbra \ttt_j(\gG),  \ttt_{j+1}(\gG)\rbra:\,  S_n= \max_{i \in \lbra \ttt_j(\gG), \ttt_{j+1}(\gG)\rbra} S_i\right\} &  \textrm{ if $j$ is even}\, ,
 \\
  \min\left\{n \in \lbra \ttt_j(\gG),  \ttt_{j+1}(\gG)\rbra:\,  S_n= \min_{i \in \lbra \ttt_j(\gG), \ttt_{j+1}(\gG)\rbra} S_i\right\} &  \textrm{ if $j$ is odd}\, ,
 \end{cases}
 \end{equation}
 and $\tu_{j+1}^+(\gG)$ is defined like $\tu_{j+1}(\gG)$ with the minimum for 
 $n \in \lbra \ttt_j(\gG),  \ttt_{j+1}(\gG)\rbra$ replaced by a maximum. 
 
 Like before, we  say that $\tu_{j}(\gG)$ and  $\tu_{j}^+(\gG)$ are location of $\gG$-extrema: maxima (respectively minima) if $j$ is odd (respectively even).
 In $\lbra \tu_{j}(\gG), \tu_{j}^+(\gG)\rbra$ there can be other $\gG$-maxima if $j$ is odd (or $\gG$-minima if $j$ is even).

\smallskip 

\begin{rem}
\label{rem:arbitrary-choice}
Of course the choice of starting with $\ttt_1(\gG)$ which is a time of $\gG$-decrease is arbitrary: we could have chosen to start by looking for a time of $\gG$-increase. This   
leads to a sequence of  $\gG$-extrema locations $(\tu_j)$ and $(\tuplus_j)$  that differs only for the the first entry of the sequence and for the labeling of the sequence.
More precisely, if we set 
\begin{equation}
\label{eq:t-down-up}
 \ttt^\downarrow(\gG)\,:=\, \inf\left\{n\ge 1:\,  S^\downarrow_{0,n} \ge \gG\right\} 
 \ \text{ and } \  \ttt^\uparrow(\gG)\,:=\, \inf\left\{n\ge 1:\,  S^\uparrow_{0,n} \ge \gG\right\} \, ,
\end{equation}
and if we choose the opposite convention of looking first for a $\gG$-increase (we call standard the other convention)
\begin{itemize}[leftmargin=0.6 cm]
\item either
$\ttt^\uparrow(\gG)< \ttt^\downarrow(\gG)$ 
and the first detected $\gG$-extremum is the first of the absolute minima 
in $\lbra 0,\ttt^\uparrow(\gG)\rbra$, and the second one is the  $\gG$-maximum we found at $\ttt_1(\gG)$ with  
the standard convention: after that the two procedures coincide except for the shift of $1$ in the labels;
\item or $ \ttt^\downarrow(\gG)< \ttt^\uparrow(\gG)$ and $\ttt_1(\gG)=\ttt^\downarrow(\gG)$ goes undetected. The first 
detected extremum is the first absolute minimum  in $\lbra 0, \ttt^\uparrow(\gG)\rbra$, which coincides with the 
first absolute minimum  in $\lbra  \ttt^\downarrow(\gG), \ttt^\uparrow(\gG)\rbra$ and it is the first minimum, at $\ttt_2(\gG)$, detected with the standard convention: also in this case  the two procedures coincide at all later times, expect for the label shift of $1$.
\end{itemize}
\end{rem}

\smallskip

We are now ready to define $\left(s_n^{(F, +)}\right)_{n \in \bbN}$:  \begin{equation}
\label{eq:sF+}
 s_n^{(F, +)}\, :=\, \begin{cases}
 +1 & \text{ if } \exists j \text{ even such that }   n \in [\![\tu^+_j(\gG)+1,  \tu_{j+1}(\gG))]\!]
  \text{ and } S_{\tu_{j+1}(\gG)}-S_{\tu^+_j(\gG)} > \gG,
 \\
 -1 & \text{ if } \exists j \text{ odd\,\,  such that } n \in [\![\tu^+_j(\gG)+1, \tu_{j+1}(\gG)]\!]
  \text{ and } S_{\tu^+_j(\gG)}-S_{\tu_{j+1}(\gG)}> \gG,
 \\ \phantom{1} 0 & \text{ otherwise.}
 \end{cases}
\end{equation} 



\begin{figure}[h]
\begin{center}

\begin{tikzpicture}[scale=0.7]

\draw[->] (0, 0) -- (19, 0) node[below] {{\scriptsize $n$}}; 
\draw[->] (0, -1.5) -- (0, 5.5) node[left] {{\scriptsize $S_n$}}; 

\foreach \y in {1, 2, 3, 4, 5} {
    \draw (-0.2, \y) -- (0, \y) node[left] {\tiny \y};
}
\foreach \y in {-1} {
    \draw (-0.2, \y) -- (0, \y) node[left] {\tiny \y};
}

 
 \foreach \x in {0.5, 1, 1.5, 2.5, 3,  3.5, 4, 4.5, 5, 5.5, 6, 6.5, 7, 7.5, 8, 8.5, 9, 9.5, 10, 10.5, 11, 11.5, 12, 12.5,  13, 13.5, 14,  14.5, 15, 15.5, 16, 16.5,  17 , 17.5, 18, 18.5} {
     \draw (\x, 0) -- (\x, -0.2);
 }


\draw (0.5, 0) -- (0.5, -0.2) node[below]  {\tiny $1$};
\draw (1.5, 0) -- (1.5, -0.2) node[below] {\tiny $3$};

\draw[line width=1.5pt] 
    (0, 0) -- (0.5, 1) -- (1, 2) -- (1.5, 1) -- (2, 2) -- (3, 0) -- (3.5, -1) -- (5, 2)
    -- (5.5, 3) -- (6, 2) -- (7, 4) -- (8.5, 1)-- (9.5, -1) -- (10.5, 1) -- (11, 0)
    -- (11.5, 1) -- (12, 0) -- (14.5, 5) -- (15, 4)-- (15.5, 5) -- (17.5, 1) -- (18, 2) -- (18.5, 1) -- (19, 2);

\node[below] at (1, -0.2) {\scriptsize $\tu_1$};
\draw[dotted] (1, 0) -- (1, 2);

\node[below] at (2, 0) {\scriptsize $\tu^+_1$};
\draw[dotted] (2, 0) -- (2, 2);

\node[above] at (3.5, 0) {\scriptsize $\ttt_1$};
\node[above] at (3.5, 0.5) {\scriptsize $\tu_2$};
\draw[dotted] (3.5, 0) -- (3.5, -1);

\node[below] at (5, -0.2) {\scriptsize $\ttt_2$};
\draw[dotted] (5, 0) -- (5, 2);
 
 \node[below] at (7, -0.2) {\scriptsize $\tu_3$};
\draw[dotted] (7, 0) -- (7, 4);

 \node[below] at (7, -0.2) {\scriptsize $\tu_3$};
\draw[dotted] (7, 0) -- (7, 4);

 \node[below] at (8.5, -0.2) {\scriptsize $\ttt_3$};
\draw[dotted] (8.5, 0) -- (8.5, 1);

 \node[above] at (9.5, 0) {\scriptsize $\tu_4$};
\draw[dotted] (9.5, 0) -- (9.5, -1);

\node[below] at (13, -0.2) {\scriptsize $\ttt_4$};
\draw[dotted] (13, 0) -- (13, 2);

\node[below] at (14.5, -0.2) {\scriptsize $\tu_5$};
\draw[dotted] (14.5, 0) -- (14.5, 5);

\node[below] at (15.5, 0) {\scriptsize $\tu^+_5$};
\draw[dotted] (15.5, 0) -- (15.5, 5);

\node[below] at (17, -0.2) {\scriptsize $\ttt_5$};
\draw[dotted] (17, 0) -- (17, 2);


\end{tikzpicture}

\end{center}
\caption{\label{fig:1} The sequence of $\gG$-extrema for the simple symmetric random walk with $\gG=5/2$: in fact, $\gG\in (2,3]$ leads to the very same 
sequence of $\gG$ extrema.  }
\end{figure}

\subsection{The main result}
\label{sec:main}
We let $\loglog(\cdot):= \log(\log(\cdot))$.

\medskip

\begin{theorem}
\label{th:main}
Assume that  \eqref{hyp-S}  holds.
Then there exists a positive (deterministic) function $\gG \mapsto D_\gG$ (see \eqref{eq:DgGdef} for a concrete expression) such that 
 for almost every realization of $(h_n)$ and every choice of the boundary spins
\begin{equation}
\label{eq:main1}
\lim_{N \to \infty} \frac 1N \left \vert \left \{ n=1,2 \ldots, N:\, \gs_n \neq s^{(F, +)}_n\right\} \right\vert \,=\,  D_\gG\,  ,
\end{equation} 
in $\bP^{ab}_{N, J, h}$ probability. Moreover for $\gG \to \infty$
\begin{equation}
\label{eq:main2}
D_\gG \,=\, O \left(\frac{\loglog \gG }\gG\right)\, .
\end{equation}
\end{theorem}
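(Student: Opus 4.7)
My plan is to combine the transfer-matrix representation of $\bP^{ab}_{N,J,h}$ with the block decomposition of $\lbra 1,N\rbra$ induced by the sequence of $\gG$-extrema $(\tu_j(\gG))_{j\in\bbN}$ of $S$. With the $2\times 2$ matrices
$M_n\,:=\,\bigl(\begin{smallmatrix}e^{J+h_n}&e^{-J+h_n}\\ e^{-J-h_n}&e^{J-h_n}\end{smallmatrix}\bigr)$, one has $Z^{ab}_{N,J,h}=(M_1M_2\cdots M_N)_{ab}$ up to a boundary factor, and the quenched law of $\gs$ under $\bP^{ab}_{N,J,h}$ is an inhomogeneous Markov chain whose one-step weights are explicit ratios of entries of such matrix products. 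The guiding observation is that by the very definition of the $\gG$-extrema there is no $\gG$-sized adverse excursion of $S$ on any block $\lbra\tu_j(\gG),\tu_{j+1}(\gG)\rbra$, while a $\gG$-sized favourable increment is accumulated across it; the plan is to turn this deterministic statement about $S$ into a probabilistic statement about $\gs$ via the Markov/transfer structure.

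The first key step will be a pinning estimate at each $\gG$-extremum: uniformly in $N$, $a,b$ and $j$,
$$\bP^{ab}_{N,J,h}\bigl(\gs_{\tuplus_j(\gG)}\neq s^{(F,+)}_{\tuplus_j(\gG)}\bigr)\,\le\,e^{-c\gG}\qquad \bbP\text{-a.s.}$$
Its proof should reduce to an energy comparison between two configurations of $\gs$ differing by a single flip at $\tuplus_j(\gG)$: such a flip creates at least one additional domain wall in an adjacent block, contributing a factor $e^{-\gG}$ to the Gibbs weight, and the extremality of $\tuplus_j(\gG)$ guarantees that this cost cannot be offset by any field gain of order $\gG$. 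Conditionally on simultaneous Fisher pinning at every extremum — which holds except at an $e^{-c\gG}$-fraction of the $\tuplus_j$'s in $\lbra 1,N\rbra$ by a union bound — the Gibbs measure factorises over the blocks, and within each block $\lbra\tu_j(\gG),\tu_{j+1}(\gG)\rbra$ a second transfer-matrix argument exploiting the absence of adverse $\gG$-excursions of $S$ shows that the conditional Gibbs measure agrees with $s^{(F,+)}$ outside an $O(1)$-length transition layer near $\tuplus_j(\gG)$ and outside the multiple-extremum sub-interval $\lbra\tu_j(\gG),\tuplus_j(\gG)\rbra$, on which $s^{(F,+)}\equiv 0$ by definition and every spin is automatically counted as a disagreement.

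Applying Birkhoff's theorem to the stationary and ergodic sequence of block data under the natural shift of $(h_n)_{n\in\bbZ}$ then yields the almost-sure limit \eqref{eq:main1} with $D_\gG$ equal to the ratio of $\bbE[\tuplus_1(\gG)-\tu_1(\gG)]$ (plus an $O(1)$ correction) to $\bbE[\tu_2(\gG)-\tu_1(\gG)]$, which should be the content of \eqref{eq:DgGdef}. Standard Cram\'er/Wiener--Hopf estimates for a centered, square-integrable walk give that the denominator is of order $\gG^2$, so \eqref{eq:main2} will follow from an upper bound $O(\gG\loglog\gG)$ on the numerator, which amounts to controlling the number of times $S$ revisits its running maximum between two consecutive $\gG$-extrema. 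This last estimate is, in my view, the hard step: the natural bound of order $\gG$ coming from plain ladder-epoch counts up to cumulative height $\gG$ is just short of what is needed, and the additional $\loglog\gG$ factor should emerge from a law-of-the-iterated-logarithm-type control of the oscillations of $S$ near its extrema. A further technical obstacle is the uniformity of the pinning estimate of Step~2 over $j$: the $\gG$-block lengths have heavy tails, so rare long blocks must be treated via a Perron--Frobenius/Lyapunov-exponent argument on the random matrix product rather than by a direct energy count, and this is where the $L^2$-assumption \eqref{hyp-S} on $h_1$ is expected to be used.
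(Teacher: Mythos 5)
Your block-decomposition plan is genuinely different from the paper's route, but several of its pivotal claims are false.

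The pinning estimate at $\tuplus_j(\gG)$ cannot hold with an $e^{-c\gG}$ error. Writing $\ms_n:=\ls_{n-1}+2h_n+\rs_{n+1}$ for the hard-reflected analogue of the field governing $\bP_{J,h}(\gs_n=\pm1)$, the explicit computation in the proof of Proposition~\ref{th:lr-Fisher} shows that $\ms_n=0$ for every $n$ in the cluster $(\tu_j(\gG),\tuplus_j(\gG)]$, and in particular at $n=\tuplus_j(\gG)$. Since $l_n$ and $r_n$ deviate from $\ls_n$ and $\rs_n$ by only $O(\loglog\gG)$ with high probability (Lemma~\ref{th:modelsclose2}), one gets $m_{\tuplus_j}=O(\loglog\gG)$, hence $\bP_{J,h}(\gs_{\tuplus_j}=+1)$ is close to $1/2$, not exponentially pinned. (Pinning of the kind you describe does hold at $\tu_{j}(\gG)$, which is the \emph{first} extremum of the cluster, where $|\ms_n|\ge 2\gG$, and in the middle of a stretch; but not at $\tuplus_j$.) Your energy-comparison heuristic overlooks that at the last extremum of a cluster the field accumulated on the right exactly balances the field accumulated on the left.

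The claim of an $O(1)$-length transition layer per block is also wrong, and this is the most serious gap because it governs the final bound. The set of sites where $\gs_n$ disagrees with $s^{(F)}_n$ is, up to exponentially small terms, the set where $|m_n|$ is small (say $\lesssim\loglog\gG$); the core of the paper's argument (Lemma~\ref{th:ctrl-inv} plus Lemma~\ref{th:modelsclose2}) is the estimate $\bbP(|m_0|\le C\loglog\gG)=O(\loglog\gG/\gG)$, coming from a bound on the invariant density of the projective transfer-matrix chain down to scale $\loglog\gG$. Over a block of length $\asymp\gG^2$ this gives $O(\gG\loglog\gG)$ disagreement sites concentrated near the domain wall — not $O(1)$. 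Relatedly, your tentative formula $D_\gG\approx\bigl(\bbE[\tuplus_1(\gG)-\tu_1(\gG)]+O(1)\bigr)/\bbE[\tu_2(\gG)-\tu_1(\gG)]$ would give $D_\gG=O(\gG^{-2})$ whenever the law of $h_1$ is non-lattice (since then $\tu_j=\tuplus_j$ a.s.), which is inconsistent with \eqref{eq:main2} and, via the overlap bound of Remark~\ref{rem:overlap}, demonstrably false. The multiple-extremum set is not the source of the $\loglog\gG/\gG$ density.

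By contrast, the paper's proof never decomposes into blocks, never proves pinning at extrema, and never factorises the Gibbs measure. It shows \eqref{eq:Psigmaj}--\eqref{eq:Psigmajinf}, that $\bP^{ab}_{N,J,h}(\gs_n\neq s)=\bigl(1+\exp(s\,m_n^{(ab)})\bigr)^{-1}$ with $m_n^{(ab)}=l^{(a)}_{1,n-1}+2h_n+r^{(b)}_{n+1,N}$, identifies the $N\to\infty$ limit $D_\gG=\bbE\bigl[\bP_{J,h}(\gs_0\neq s^{(F)}_0)\bigr]$ by a variance/ergodicity argument (Proposition~\ref{th:ergstatK}), then bounds $D_\gG$ by three ingredients: the exact identity $s^{(F)}_n=\sign(\ms_n)$ (Proposition~\ref{th:lr-Fisher}), the comparison $|l-\ls|,|r-\rs|\lesssim\loglog\gG$ (Lemma~\ref{th:modelsclose2}), and the invariant-density bound (Lemma~\ref{th:ctrl-inv}). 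Your proposal contains none of these three, and the place where you locate the real difficulty — counting revisits of $S$ to its running max, possibly via a law-of-the-iterated-logarithm — does not line up with where the $\loglog\gG$ actually comes from (it is the logarithm of a $\log\gG$ tail threshold needed for a polynomial error probability, not an LIL). To turn your scheme into a proof you would essentially need to re-derive the paper's $m_n$-representation inside each block and then face the very same invariant-density question, so the block structure adds bookkeeping without removing the core difficulties.
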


\medskip

Note that $D_\gG$ is naturally interpreted as  \emph{discrepancy density} between the spin configuration and the Fisher configuration. 
We will discuss this result at length in \S~\!\ref{sec:overview} notably about its relation with the analogous 
result proven in \cite{cf:CGH1}, which deals with a continuum version of the model we consider. The continuum model in \cite{cf:CGH1} arises as a
weak disorder scaling limit of the disordered Ising model we consider. {The continuum model is easily guessed  if one first remarks that
the Gibbs measure  we consider, i.e. \eqref{eq:Gibbs}, 
is an exponential modification involving the disorder that is given by the  IID sequence $(h_n)$ -- or, equivalently, by the associated random walk $(S_n)$ -- of an underlying free spin process in which the spins are IID Rademacher variables. The continuum model consists in replacing 
 the free spin process by 
a  continuous time Markov chain on the state space  $\{-1,+1\}$ with unit (Poisson) jump rates, and the  disorder $(S_n)$ by a Brownian motion.} In \cite[App.~C]{cf:CGH1} is shown that 
the partition function \eqref{eq:ZRFIC} converges when $N\to \infty$,  $\vartheta\to 0$ and $J \to \infty$ in a suitable joint way to the Continuum partition function in \cite{cf:CGH1}.
It is somewhat surprising that this weak disorder limit model preserves the main features of the underlying discrete model: for the continuum model a result qualitatively (and, to a certain extent, also quantitatively) close to
Theorem~\ref{th:main} does hold. This will be discussed further in \S~\!\ref{sec:overview}, but we anticipate here that  
the analogous construction of the sequence of $\gG$-extrema of \S~\!\ref{sec:Fisher} -- we call it Neveu-Pitman process -- was first  performed in \cite{cf:NP89} by J.~Neveu and J.~Pitman  for a Brownian trajectory, 
without the nuisance of having to deal with the possibility that $\tu_j< \tu_j^+$. Moreover,
in the Brownian case, by scaling, the value of $\gG$ is inessential and one can focus on $1$-extrema. We do not reproduce the construction here 
and we refer of course to \cite{cf:NP89}, but also to \cite{cf:Cheliotis,cf:BF08,cf:CGH1}.
For the location of the $1$-extrema of a standard Brownian motion 
 trajectory $B$ we use the notation $\upsilon_1, \upsilon_2, \ldots$.  In fact, according to \cite{cf:NP89},    the sequence of $1$-extrema times $(\upsilon_j)_{j \in \bbN}$  forms a renewal sequence and $((\upsilon_{j+1}-\upsilon_j, \vert B_{\upsilon_{j+1}}-B_{\upsilon_{j}}\vert) )_{n\in \bbN}$ is an IID sequence too.
$(\upsilon_{2}-\upsilon_1, \vert B_{\upsilon_{2}}-B_{\upsilon_{1}}\vert)$ is a random variable in $(0, \infty)\times (1, \infty)$ whose law is explicitly known.
More precisely the Laplace transform of this two dimensional law 
is known explicitly: the two marginal laws are explicit, notably $ \vert B_{\upsilon_{2}}-B_{\upsilon_{1}}\vert-1$ is an exponential random variable of parameter $1$  (\cite{cf:NP89}, see also \cite{cf:FLDM01}).

\smallskip

Going back to the discrete case, the situation is different, but not substantially different, in part thanks to Donsker Invariance Principle. We collect the random walk analog of the Neveu-Pitman result in the next statement.
\medskip

\begin{proposition}
\label{th:scaling} 
We have that
\begin{equation}
\label{eq:ren-struc}
 \left( \tu_{n+1}(\gG) - \tu_n(\gG), \left \vert S_{\tu_{n+1}(\gG)} - S_{\tu_n(\gG)}\right\vert \right)_{n\in \bbN}\, ,
\end{equation}
is a sequence of independent random vectors taking values in $\bbN \times [\Gamma, \infty)$. Moreover the subsequence with even (respectively odd) indices, i.e. $n\in 2 \bbN$ (respectively $n \in 2 \bbN -1$), is IID and, 
with the usual notation for convergence in law, we have that for every $n \in \bbN$
\begin{equation}
\label{eq:scaling} 
 \left( \frac{\vartheta^2}{ \gG ^2}\left(\tu_{n+1}(\gG) - \tu_n(\gG)\right), \frac{\left \vert S_{\tu_{n+1}(\gG)} - S_{\tu_n(\gG)}\right\vert }{ \gG }\right)
 \stackrel{\gG \to \infty}
  {\Longrightarrow} 
  \left(\upsilon_{2}-\upsilon_1, \vert B_{\upsilon_{2}}-B_{\upsilon_{1}}\vert\right)\,.
\end{equation}
\end{proposition}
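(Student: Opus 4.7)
The plan is to apply the strong Markov property of $S$ twice: first at the stopping times $\ttt_n(\gG)$ to obtain independence of ``blocks'', and then within each block via an excursion decomposition to split it into independent pre-extremum and post-extremum parts. Set $\ttt_0(\gG):=0$ and define the block $B_n := (S_{\ttt_{n-1}+k}-S_{\ttt_{n-1}})_{0\le k\le \ttt_n-\ttt_{n-1}}$. Iterated strong Markov at the $\ttt_n$'s yields that $(B_n)_{n\in\bbN}$ is jointly independent, with law depending only on the parity of $n$: odd blocks are walks started from $0$ and stopped at the first $\gG$-decrease, even blocks at the first $\gG$-increase.

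Within a single even-indexed (``up'') block, whose extremum is a minimum, I would decompose the walk via the successive ladder times of its running minimum: setting $\tilde S$ for the block walk, let $\tau_0:=0$ and $\tau_{k+1}:=\inf\{n>\tau_k: \tilde S_n<\tilde S_{\tau_k}\}$. Strong Markov at each $\tau_k$ shows that the excursions $\gep_k$ of $\tilde S$ above successive running minima are IID, with IID heights $H_k$. The block ends exactly within the first excursion of height $\ge\gG$, call it the $N$-th (with $N$ geometric), truncated at the first time this excursion attains height $\gG$. Since excursions $1,\ldots,N-1$ all stay strictly above higher running-min levels, the absolute minimum of the block equals $\tilde S_{\tau_{N-1}}$ and is first attained precisely at $\tau_{N-1}$; hence the pre-minimum portion depends only on $(\gep_1,\ldots,\gep_{N-1})$ while the post-minimum portion depends only on the truncated $\gep_N$. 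Because $N$ is a stopping time in the IID excursion sequence, conditioning on $\{N=k\}$ and summing yields the factorization $\bbP(\mathrm{pre}\in A,\,\mathrm{post}\in B)=\bbP(\mathrm{pre}\in A)\bbP(\mathrm{post}\in B)$. Down-blocks are symmetric (swap max/min and use the running max).

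The pair in \eqref{eq:ren-struc} at index $n$ is then a functional of the post-extremum part of $B_n$ and the pre-extremum part of $B_{n+1}$, since $\tu_{n+1}-\tu_n=(\ttt_n-\tu_n)+(\tu_{n+1}-\ttt_n)$ and, because $S_{\ttt_n}$ lies between $S_{\tu_n}$ and $S_{\tu_{n+1}}$, $|S_{\tu_{n+1}}-S_{\tu_n}|=|S_{\tu_n}-S_{\ttt_n}|+|S_{\ttt_n}-S_{\tu_{n+1}}|\ge\gG$ via the first summand. Distinct $n$'s index disjoint ``(pre or post, block)'' parts, and those parts are jointly independent by the two preceding steps, giving joint independence of the vectors; the IID property for the even-indexed and odd-indexed subsequences then follows from parity-stationarity of the blocks. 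For \eqref{eq:scaling} I would apply Donsker's invariance principle, $(\vartheta^{-1}\gG^{-1}S_{\lfloor\vartheta^{-2}\gG^2 t\rfloor})_{t\ge 0}\Rightarrow (B_t)_{t\ge 0}$: the $\gG$-extrema procedure on $S$ commutes with this rescaling, becoming the $1$-extrema procedure on the rescaled walk, and continuity of the $1$-extrema map at a.e.\ BM path (which almost surely has no plateaus and no ties between distinct extrema), combined with Skorokhod representation and the continuous-mapping theorem, yields the joint convergence of the first few rescaled $\gG$-extrema to the Neveu--Pitman $1$-extrema of $B$, whose law is recorded in \cite{cf:NP89}.

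The main obstacle is the within-block pre/post independence: because $\tu_j$ is \emph{not} a stopping time of the natural filtration, the Markov property cannot be applied at $\tu_j$ directly, and the hidden regenerative structure must be extracted through the ladder-excursion decomposition above (or below) the running extremum. The lattice case, which motivates the distinction between $\tu_j$ and $\tuplus_j$, requires some extra bookkeeping to make sure that ``first attainment of the block extremum'' correctly corresponds to $\tau_{N-1}$. The scaling-limit step is comparatively standard but demands Donsker on a time window growing with $\gG$ together with a uniform approximation argument for the $1$-extrema map in a neighborhood of a typical BM path.
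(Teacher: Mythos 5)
Your proof is correct and follows essentially the same route as the paper: the independence and parity-IID structure is extracted via a within-block ladder-excursion decomposition combined with the strong Markov property at the stopping times $\ttt_n(\gG)$ -- this is precisely the content and proof strategy of Lemma~\ref{th:ren-u} -- and the scaling limit is obtained by Donsker's invariance principle together with continuity of the $1$-extrema functional at almost every Brownian path, via the continuous-mapping theorem. The one simplification the paper makes is to apply Donsker directly in $C^0([0,\infty);\bbR)$ with the topology of uniform convergence on compacts (citing Stroock--Varadhan), which dissolves your concern about a time window growing with $\gG$ and removes the need for a separate uniform-approximation argument.
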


\medskip

Proposition~\ref{th:scaling} is in part already present in \cite{cf:BF08} and we prove it in App.~\ref{sec:scaling}. It is of great conceptual importance because it provides an explicit quantitative link with the RG predictions \cite{cf:FLDM01}: in particular Proposition~\ref{th:scaling}
makes explicit the $\gG^2$ (spatial) scale for the Fisher configuration, as predicted by the Imry-Ma argument.
We add that, in spite of a rather different presentation,  our definition of $\gG$-extrema differs from the one in  \cite{cf:BF08} 
only for the fact that we consider  the $\gG$-extrema for a random walk with time running from $0$ to infinity and, in the proof, also for the bi-infinite random walk, while in \cite{cf:BF08} the $\gG$-extrema are introduced for a finite portion of the random walk. Moreover the labeling 
 of the $\gG$-extrema in \cite{cf:BF08} is different because it orders the $\gG$-minima, \emph{potential valleys} in that case, according to their depth. In   \cite[App.~A]{cf:BF08} one finds for a result directly linking Fisher RG iterative procedure
 with their labeling procedure: in our case this order is not important and, in fact, even impossible because we consider infinite systems. 
 We give in Proposition~\ref{th:RG} a statement  analogous to the one in  \cite[App.~A]{cf:BF08}, but which  is tailored to
 the RG procedure in  \cite{cf:FLDM01}, showing that the RG procedure in \cite{cf:FLDM01} does capture all the $\gG$-extrema.

\subsection{Discussion of the result and overview of the rest of the paper}
\label{sec:overview} 
\subsubsection{A quick overview of our main arguments}
The proof of Theorem~\ref{th:main}  relies on the transfer matrix representation of $Z_{N, J, h}$:
in transfer matrix terms we have 
\begin{equation}
Z^{++}_{N, J, h}\,=\, (M_0M_1M_2\ldots M_N)_{1,1}\exp\left(-h_0\right)\,,
\end{equation}
  where $M_k=T(h_k)Q$, with $T(h_k)$ and $Q$ that are $2\times 2$ matrices with  non negative entries explicitly given in \eqref{eq:transmat}. 
We are therefore dealing with a product of IID random matrices and the key tool in analyzing random matrix products
is the analysis of the action of these matrices on the (projective space of the) direction of vectors, see e.g. \cite{cf:Viana}. 
 Since the dimension is two the direction of a vector $v\in \bbR^2\setminus \{0\}$ is encoded by an  angle $\theta\in [0, \pi]$ and,  since the matrices have positive entries, one can  restrict to  $[0, \pi/2]$: $v = \vert v \vert (\cos(\theta), \sin(\theta))$ and $\theta$ identifies the direction. In more explicit terms, $(v_n)$ defined recursively, given $v_0$ (say, non random), by $v_{n}=M_{n} v_{n-1}$ is a Markov chain with state space $[0, \infty)^2\setminus \{(0,0)\}$, and it directly induces the \emph{projective} Markov chain $(\theta_n)$, with state space $[0 ,\pi/2]$.
For us it is more practical to work with $l_{1, n}:= \log \tan\theta_n$, a Markov chain on $\bbR$ (in reality, we will see that the state space is smaller: $(-\gG, \gG)$).  
 We point out that  $l_{1,n}$ depends on $h_1, \ldots, h_n$ and the subscript $1$ is there to recall that $n=1, 2, \ldots$. In fact we will need to consider also the time reversed 
 Markov chain $(r_{N-n, N})_{n=0,1, \ldots}$, with initial condition $r_{N,N}$, which has the \emph{same} transition mechanism as $(l_{1,n})$, except that $h$ is replaced by $-h$: 
 by this we mean that $l_{1, n}=f_{h_n}(l_{1, n-1})$ and  $r_{N-n, N}=f_{-h_{N-n}}(r_{N- n+1, N})$, for a suitable function $f_h$ (given in \eqref{eq:f_h}). 
 So $r_{n+1, N}$ depends on $h_{n+1}, h_{n+2}, \ldots, h_{N}$.

We are going to explain in Section~\ref{sec:tm} that for every $n\in \{1, \ldots, N\}$
we have the identity
\begin{equation}
\label{eq:Psigma0.0}
\bP^{ab}_{ N, J, h}\left( \gs_n=+1\right)\, =\, 
\frac 1 {1+ \exp\left(-\left(l_{1, n-1} +2h_n+ r_{n+1,N}\right)\right)}\, , 
\end{equation}
and $l_{1, n-1}$,  $h_n$ and $r_{n+1, N}$ are independent random variables because of their dependence on the sequence $(h_n)$ of independent variables. The boundary condition $a$ (resp. $b$) determines $l_{1,1}$ (resp. $r_{N,N}$). The intuitive explanation is that $l_{1, \cdot}$, respectively $r_{\cdot, N}$, carries the \emph{disorder} information from the left (respectively right) boundary to the bulk of the system.

 \smallskip
 
Here are three important facts for our analysis:
\smallskip

\begin{enumerate}[leftmargin=0.6 cm]
\item We are going to consider $N \to \infty$ and sites $n$ away from the boundary, so the processes $l$ and $r$ may be considered to be stationary (and independent of the boundary conditions: so we omit the subscripts $1$ and $N$). Obtaining properties on their common invariant probability is one of the important steps in our arguments. 
 \item
 The Markov kernels of $(l_n)$ and of $(r_{-n})$ depend also on $J$. 
 In fact $(l_n)$  is  a 
  random walk with increments $(2 h_n)$  and a \emph{repulsion mechanism} that forces the walk never to leave  $(-\gG,  \gG)$. 
  This repulsion effectively acts only when the walk is at finite distance from $\pm \gG$, even if, strictly speaking, it is felt in the whole of $(-\gG, \gG)$. 
 As $J \to \infty$, the two walls become farther and farther so, on a large scale, this repulsion becomes more and more similar to the (simpler)  hard wall repulsion, i.e. the case 
 in which the process behaves exactly like a random walk, and if a jump would make it leave $(-\gG, \gG)$, the walk is just stopped at the boundary (i.e., at $\gG$ or at $-\gG$): of course the very same remarks apply to $(r_n)$,  by time reversal.
   The hard wall processes are denoted by  $( \widehat l _n )$ and $\left( \widehat r _n \right)$. 
 A second important step for us is controlling the error in replacing $l$ and $r$ with $\ls$ and $\rs$.
 \item The limit we consider is $N \to \infty$ before $J$. Since $n$ is away from the boundary, for large $J$ we expect $\vert l_{n-1}\vert$ and $\vert r_{n+1}\vert$ to be typically very large. Therefore $\vert l_{n-1}+2h_n+r_{n+1}\vert$ is expected to be large so the probability in 
\eqref{eq:Psigma0.0} is essentially $0$ or $1$. But whether it is $0$ or $1$ depends crucially on the sign of  $l_{n-1}+2 h_n + r_{n+1}$. From these observations we infer that, with high probability (as $J\to \infty$), $\gs_n$ is going to be equal to $\sign(l_{n-1}+2 h_n + r_{n+1})$
 and the spin configuration $(\sign(l_{n-1}+2 h_n + r_{n+1}))_n$
  is our best bet for the behavior of the true spin configuration $(\gs_n)$. On the basis of this discussion, one expects that $s^{(F,+)}_n$ is equal or very close to be equal to
$\sign(l_{n-1}+2 h_n + r_{n+1})$. What we will show is slightly different and very sharp:
  $\sign\left(\ls_{n-1} +2h_n+ \widehat r_{n+1}\right)=s^{(F,+)}_n$
  for $n\ge n_0$ with $n_0$ disorder dependent, but a.s. finite.
  Needless to say, this is another crucial step to control the discrepancies between the spin configuration and the Fisher configuration.
 \end{enumerate}

 \subsubsection{About the main result}
 Theorem~\ref{th:main} is analogous to the result proven in \cite{cf:CGH1} for  the continuum model arising in the   weak disorder rescaling  limit. Our approach is highly robust:  in the discrete setup and assuming only (1.3), we match the result obtained in the continuum and Gaussian context. While the argument of proof that we present here is built from  the  strategy exploited  in \cite{cf:CGH1},  we draw the reader's attention to  the following facts.
 \smallskip
 
 \begin{itemize}[leftmargin=0.4 cm]
 \item In  \cite{cf:CGH1} the analog of the processes $(l_n)$ and $(r_{-n})$ solve stochastic differential equations that are (to a certain 
 extent) solvable: in particular, the invariant probability has an explicit expression in terms of Bessel functions. We do not have an explicit expression of the  invariant probability of the Markov chains we deal with for the Ising model: 
 it is possible to show  that the rescaled process $(l_{\lceil \gG^2t\rceil}/\gG)_{t \ge 0}$  converges (as $\gG \to \infty$) to a Brownian motion reflected at the boundary of $[-1,1]$, and from this fact
one can extract that the invariant probability of the chain converges in the same limit to the uniform probability over $[-1,1]$. {But such a \emph{macroscopic} control on the invariant probability, i.e. on intervals of length $\propto\! \gG$, is of no help and, in general, on small scales the invariant probability is not expected to be close to the uniform measure (see Fig.~\ref{fig:inv-p} and its caption): and  we need a control on the invariant measure down to (almost \emph{microscopic}) intervals of length $\propto\! \loglog \gG$. }
 \item Rather than relying on Brownian approximations of random walks (that appear to yield rougher and much less general results), we perform the whole procedure at the level of Markov chains: this demands a number of estimates that are really discrete process estimates.
 Donsker Invariance Principle is used 
   only in Proposition~\ref{th:scaling} that states a fact that is very important in order to appreciate the behavior of the Fisher configuration $s^{(F)}$
 and the link with Fisher RG. But, mathematically,  Proposition~\ref{th:scaling} is a result independent of Theorem~\ref{th:main}.
 \item About the law of $h_1$ we just ask for $h_1 \in \bbL^2$.  This shows a remarkable universality of the result and it is even somewhat surprising from the statistical mechanics viewpoint: note that  the annealed version 
 does not exist at all under such a general assumption on the disorder. And we are also able to deal with  cases in which
 the law of $h$ has a  lattice component: the difficulty in this case is that there can be several extrema at the same height.  
 \item Working with Markov chains causes a priori more troubles than dealing with Brownian motion like in \cite{cf:CGH1}, but it gives also more flexibility at very small scales: in particular, the fact that we can work with processes   $l$ and $r$ on  a  bounded state space $(-\gG,\gG)$ follows by a
  transfer matrix/Markov chain trick. The boundedness 
 property does simplify certain estimates.
 \end{itemize}
 
 \smallskip
 
We do not repeat  here  the detailed discussion in  \cite[Sec.~2.4]{cf:CGH1} about the relation of the pathwise results we present  with the sharp free energy estimates available in the literature. But we recall that for the free energy density 
$\tf (J):= \lim_{N \to \infty}(1/N) \bbE\log Z_{N, J, h}^{ab}$, which does not depend on the choice of the boundary spins $a$ and $b$,
we have the sharp estimate for $J \to \infty$ 
\begin{equation}
\label{eq:kappa}
\tf (J) \,=\, J+\frac{\kappa_1}{2J + \kappa_2} +R(J)\, =\, 
J+ \kappa_1 \sum_{j=0}^\infty \frac{(-\kappa_2)^j}{(2J)^{j+1}}+R(J)\, ,
\end{equation}
with $\kappa_1>0$, $\kappa_2\in \bbR$ and $R(J)$ a remainder. The control of the remainder depends on  some regularity properties  of the law of $h_1$ -- the absolute continuity of the law of $h_1$ with respect to the Lebesgue measure suffices -- and on integrability properties of $h_1$ (we are in any case assuming  \eqref{hyp-S}).
Namely, if  there exists $c>0$ such that  $\bbE[ \exp( c \vert h_1 \vert )]< \infty$
then there exists $C>0$ such that $R(J)=O( \exp(-CJ))$
(this result is proven in \cite{cf:CGGH}, see also \cite{cf:GG22}). In \cite{cf:CGGH} 
it is also proven that if there exists $\xi>5$ such that
 $\bbE[ \vert h_1 \vert ^\xi] < \infty$, then  $R(J)=O( J^{-(\xi-4)})$: in this case of course the series 
 in the rightmost term of \eqref{eq:kappa} may equivalently be replaced by the finite sum containing only the first $\lfloor \xi-4\rfloor$ terms.

One of us  has actually shown \cite{cf:Orphee} that, assuming \eqref{hyp-S} and
$\bbE[\vert h_1\vert^p] <\infty$ for $p>(3 + \sqrt{5})/2\approx 2.618$,
 then $\tf (J) \,=\, J+{\vartheta^2}/{(2J) }+ o(1/J)$. The same result has also been shown in \cite{cf:DMSDS-B} assuming $h_1$ compactly supported. This yields in particular that when \eqref{eq:kappa} holds, then $\kappa_1=\vartheta^2$. The coefficient $\vartheta^2$
is strongly suggested, but not proven,  by Fisher RG
and by Theorem~\ref{th:main} that say that the observed spin configurations are close to $s^{(F, +)}$. In fact,  one can show, by restricting the computation of the partition function to $\gs=s^{(F, +)}$, that in great generality
$\liminf_{J} 2J (\tf (J)-J)\ge \vartheta^2$, but the corresponding upper bound does  not appear to be straightforward and can be proven under the  assumptions of \cite{cf:Orphee}. 
 In \cite{cf:FLDM01} a computation of the free energy to order $1/J^2$ is given, but the result is not reliable because the  
 $1/J^2$ correction to the free energy depends on the trajectory that are close to the optimal trajectory $s^{(F, +)}$
 and not just on $s^{(F, +)}$ \cite{cf:GG22}. As a matter of fact, it is quite clear that the RG prediction cannot go beyond the $1/J$ term in the free energy expansion and the finer corrections are model dependent (and, in our case, dependent on the details of the law of $h_1$ beyond the variance), even if the whole series in $1/J$ for the free energy density \eqref{eq:kappa}
 has the remarkable (and for now rather mysterious) property of just depending on the constant $\kappa_2$.

 There is clearly a gap between the pathwise results we present here: they hold  assuming only \eqref{hyp-S}  on the law of $h_1$,  while  
 the free energy estimate \eqref{eq:kappa} demands  moment conditions and, beyond the leading order result in \cite{cf:Orphee}, also regularity hypotheses on the law of $h_1$. Understanding the free energy $J \to \infty$ behavior assuming only \eqref{hyp-S} is an open problem.

\smallskip

Two additional open questions (discussed in more detail in the introduction of
\cite{cf:CGH1}) are:
\smallskip

\begin{enumerate}[leftmargin=0.6 cm]
\item The issue of whether the estimate $D_\gG=O( \loglog (\gG) / \gG)$ is optimal or not is open.
It is worth stressing that also in our case, and assuming $h_1$ to be Gaussian, the overlap estimate (\cite[Rem.~2.7]{cf:CGH1}) yields $\liminf \gG \, D_\gG\ge \vartheta/2$ (see Remark~\ref{rem:overlap} below).
\item The case $\bbE[h_1]\neq 0$ is tackled via RG too  in \cite{cf:FLDM01}. However the results claimed in \cite{cf:FLDM01} are not expected to be exact (and proven not to be exact \cite{cf:GGG17}):
Fisher RG is expected to be only \emph{asymptotically} exact (see also \cite{cf:IM,cf:Vojta}) when $\bbE[h_1]\neq 0$ (meaning a suitable limit 
as $\bbE[h_1]\to 0$) and a free energy result in this direction is proven in \cite{cf:GGG17}, but results beyond the free energy  behavior are open issues. 
\end{enumerate}

\smallskip

\begin{rem}
\label{rem:overlap} Let us derive a lower bound on $D_\gG$ when the disorder is Gaussian: for every $\vartheta>0$,  if $h_1 \sim N(0, \vartheta^2)$,  we denote the free energy by $\tf_\vartheta (J)$ and  the discrepancy (see Theorem~\ref{th:main})  by  $D_\gG(\vartheta)$.  We are going to show that $\liminf_\gG \gG D_\gG(\vartheta)\ge \vartheta/2$.

We first compare $D_\gG(\vartheta)$ with the left-derivative of $\tf_\vartheta (J)$ with respect to $\vartheta$. 
The function $\vartheta \mapsto \tf_\vartheta (J)$ is convex and a standard Gaussian integration by parts trick in ``replica computations" \cite{cf:Bov},
yields (see \cite[(C.15)]{cf:CGH1})
\begin{equation}
\partial^-_\vartheta \tf_\vartheta (J) \le 2\liminf_{N \to \infty} \bbE \bE^{\otimes 2}_{N, J, h}\left[\frac1{N} \sum_{n=1}^N
\ind_{ \gs^{(1)}_n\neq \gs^{(2)}_n} \right]\,,
\end{equation}
in fact, except for at most countably many values of $\vartheta$, the function is differentiable at $\vartheta$ 
which yields that the above inequality is an equality and one can replace  $\partial^-_\vartheta$ with $\partial_\vartheta$ and the inferior limit with a limit. 
Now, remarking that with $s_n:= s^{(F, +)}_n$
\begin{equation}
\bP_{N, J, h}^{\otimes 2}\left(\sigma_n^{(1)}\neq \sigma_n^{(2)}\right)
\le \bP_{N, J, h}^{\otimes 2}\left(\sigma_n^{(1)}\neq s_n \text{ or } \sigma_n^{(2)}\neq s_n\right)
\le 2 \bP_{N, J, h}\left(\sigma_n\neq s_n\right)\, ,
\end{equation}
we obtain  $\partial^-_\vartheta \tf_\vartheta (J) \le 4 D_\gG(\vartheta)$.
In the case of Gaussian disorder, the assumptions of \cite{cf:Orphee,cf:GG22} are satisfied, so \eqref{eq:kappa} holds, in particular $\tf_\vartheta (J)= J+ \vartheta^2/(2J)+ O(1/J^2)$.  {The arguments in  \cite{cf:GG22} yield that $\kappa_2$ is  continuous in $\vartheta$ and that the rest $O(\exp(-CJ))$ is locally uniform in $\vartheta$, so 
the bound on the term $O(1/J^2)$  is locally uniform in $\vartheta$}.
By convexity, for every $\vartheta>0$ and  every $\gd \in (0, \vartheta)$ we have  $\partial_{\vartheta}^- \tf_{\vartheta} (J) \ge ( \tf_\vartheta (J)-  \tf_{\vartheta-\gd} (J))/ \gd =
(\vartheta^2- (\vartheta- \gd)^2)/ (2J\gd) + O(1/J^2)$. Hence 
$\gG D_\gG(\vartheta) \,\ge \, (2\vartheta - \gd)/4+ O(1/ \gG)$ and we conclude that $\liminf_\gG \gG D_\gG(\vartheta)\ge \vartheta/2$ for every $\vartheta>0$.

\end{rem}

\smallskip

The review of the literature cannot be complete without mentioning that
\smallskip
 
\begin{itemize}[leftmargin=0.4 cm]
\item the Fisher RG prediction have been established for 
Sinai random walks (and diffusions) in random environment  \cite{cf:Cheliotis,cf:BF08}. The Sinai walk has a centering condition that
is analogous to   requiring that $h_1$ is centered  in  our case. Likewise, the results available for  
 transient one dimensional random walks  in random environments \cite{cf:ESZ}, corresponding to $\bbE[h_1]\neq 0$, expose  the limits of Fisher RG outside the centered disorder case. 
\item 
the case of Ising model with long range interaction and random magnetic field is treated in \cite{cf:AW,cf:COP1,cf:COP2,cf:DingHuandMaia}. There the focus is on the case in which the interaction is decaying so slowly that
 there is  still a (first order) phase  transition in the non disordered model and on the issue  of whether the first order transition survives to the presence of a centered disordered  field. 
   We refer to these works for details and we limit ourselves to signaling the open problem of obtaining results analogous to ours 
 for  Ising models with interactions beyond nearest neighbors, with the notable exception of the mean field case and of the related Kac  model 
 \cite{cf:COP0,cf:OP2009}. In particular  \cite{cf:OP2009}   contains a result relating the typical coarse grained domains of the two phases,
 for temperatures below the mean field critical temperature,
  to the Neveu-Pitman process: the result is obtained in the limit of the coarse grained scale going to infinity.
 \item there have been recent substantial progress related to  the Imry-Ma predictions on higher dimensional Ising model and beyond, see notably \cite{cf:AHP,cf:DingXia,cf:DingZhuang}.
 \end{itemize}

\subsubsection{Organization of the rest of the paper}
In  Section~\ref{sec:tm-mainproof} we introduce the transfer matrix formalism, along with the arising processes $l$ and $r$. In this section we also build  the infinite volume Gibbs measure and we relate it to the processes just introduced. Moreover we  introduce the ergodic set-up that we repeatedly exploit in the later sections. We conclude Section~\ref{sec:tm-mainproof} with Proposition~\ref{th:discrepancy0} which is a result in the spirit
of Theorem~\ref{th:main}: it is an important intermediate step in the proof of Theorem~\ref{th:main} and it exploits the control of the invariant probability of $l$ and $r$
 \eqref{eq:fctrl-inv}
that is consequence of Lemma~\ref{th:ctrl-inv}. 
 
   Section~\ref{sec:auxiliary} is devoted to introducing the Fisher domain-wall configuration 
   $s^{(F)}_\cdot$ on $\bbZ$
    that 
  is the natural translationally covariant companion to $s^{(F, +)}_\cdot$. 
 In this section we also 
 introduce the hard wall processes $ \ls$ and $\rs$ and we 
 state three results -- Proposition \ref{th:explicit-hat}, Proposition~\ref{th:lr-Fisher}  and Lemma-\ref{th:modelsclose2} --
 involving these processes. With these results in our hand (and using once again the control on the invariant probability
  \eqref{eq:fctrl-inv}), we give the proof of Theorem~\ref{th:main} at the end of Section~\ref{sec:auxiliary} .
 
 Section~\ref{sec:comp-nu} is devoted to the proof of Lemma~\ref{th:modelsclose2} (about the  closeness of $l_n$ and
 $\ls_n$) and to the proof of Lemma~\ref{th:ctrl-inv} (about the invariant probability).
 
 In Section~\ref{sec:proofergstat} we give the details about the ergodic statements stated in Section~\ref{sec:tm-mainproof}.
 
 App.~\ref{sec:A} is devoted to the proof of various  random walk estimates we need.
 Proposition \ref{th:explicit-hat} and  Proposition~\ref{th:lr-Fisher}
 are proven  in App.~\ref{sec:proofs-constrained}.
 The proof of  the convergence to the Neveu-Pitman process is in App.~\ref{sec:scaling}.
 Throughout the paper, we use the notation 
 \begin{equation} 
 \label{eq:epsilon}
 \gep\,:= \exp(-\gG)\, .
 \end{equation}

\section{Transfer matrices, infinite volume limit and ergodic properties}
\label{sec:tm-mainproof}

It is practical to consider models on $\lbra\ell, \err \rbra=\{\ell,\ell+1, \ldots, \err\}$ with $\ell \le \err$, so we set 
\begin{equation}\label{eq:defZellerr}
Z_{\ell, \err, J, h}^{ab}\, := \, \sum_{\gs \in \{-1, +1\}^{\lbra\ell, \err \rbra}} \exp\left( H^{ab}_{\ell, \err, J, h} (\gs)\right)\,, 
\end{equation}
and 
\begin{equation}\label{eq:defHellerr}
H^{ab}_{\ell, \err, J, h} (\gs)\, :=\, J \sum_{n=\ell}^{\err+1} \gs_{n-1}\gs_n + \sum_{n=\ell}^\err h_n \gs_n, \ \ \ \text{ with } \gs_{\ell-1}:=a \text{ and } 
 \gs_{\err+1}:=b\,.
\end{equation}
We recall that $a$ and $b$ are in $\{-1, +1\} \cong \{ -, +\}$.
We introduce also the corresponding Gibbs measure $\bP_{\ell, \err, J, h}^{ab}$ on $\{-1, +1\}^{\lbra\ell, \err \rbra}$.
Moreover for every $\ell \in \bbZ$ we set also  
\begin{equation}
\label{eq:boundary}
Z_{\ell, \ell-1, J, h}^{ab}\, := \, \exp\left( J ab\right)\, .
\end{equation} 
With these notations we have for $\ell \le n \le  \err$  
\begin{equation}
\bP_{\ell, \err, J, h}^{ab}\left( \gs_n=+1\right)=\frac{
{Z}_{\ell, n-1, J, h}^{a+} e^{h_n}{Z}_{n+1,\err, J, h}^{+b}}
{{Z}_{\ell, n-1, J, h}^{a+} e^{h_n}{Z}_{n+1,\err, J, h}^{+b}
+{Z}_{\ell, n-1, J, h}^{a-} e^{-h_n}{Z}_{n+1,\err, J, h}^{-b}}\, .
\end{equation}
So if we introduce for $n\le m+1$ 
 \begin{equation}
\label{eq:LaRb}
L^{(a)}_{n,m}\, :=\, \frac{ {Z}_{n, m, J, h}^{a+}}{ {Z}_{n, m, J, h}^{a-}}\ \  \ \text{ and } \ \ \
R^{(b)}_{n,m}\, :=\, \frac{{Z}_{n,m, J,h}^{+b}}{ {Z}_{n,m, J, h}^{-b}}\,,
\end{equation}
 as well as 
\begin{equation}
l^{(a)}_{n,m} \, :=\, \log L_{n,m}^{(a)} \ \ \ \text{ and } \ \ \  r^{(b)}_{n,m}\, :=\, \log R_{n,m}^{(b)} \, ,
\end{equation}
we have for $\ell\le n \le \err$
\begin{equation}
\label{eq:Psigmaj}
\bP^{ab}_{\ell, \err, J, h}\left( \gs_n=+1\right)\, =\, \frac  {L_{\ell,n-1}^{(a)} e^{2h_n} R_{n+1,\err}^{(b)}} {1+ L_{\ell,n-1}^{(a)} e^{2h_n} R_{n+1,\err}^{(b)}}\,=\, 
\frac 1 {1+ \exp\left(-\left(l_{\ell,n-1}^{(a)} +2h_n+ r_{n+1, \err}^{(b)}\right)\right)}\, .
\end{equation}
 Note that by \eqref{eq:boundary} we have $l^{(a)}_{n,n-1}=2Ja$ and  $r^{(b)}_{m+1,m}=2Jb$.

\subsection{The transfer matrix formalism} 
\label{sec:tm}

Using the \emph{quantum mechanics} notation for vectors and adjoint vectors we set
$\langle +\vert:=(1,0)$,  $\vert+ \rangle:=(1,0)^{\tsp}$, $\langle -\vert:=(0,1)$ and $\vert - \rangle := (0,1)^{\tsp}$.
We have 
\begin{equation}
\label{eq:transfer-form}
{Z}_{\ell, \err, J, h}^{ab}\, = \, \langle a \vert Q T_\ell Q T_{\ell+1} Q \ldots Q  T_{\err} Q
 \vert b \rangle \,,
\end{equation} 
with $T_n:=T(h_n)$ where for $h \in \bbR$
\begin{equation}\label{eq:transmat}
  T(h)\, :=
\begin{pmatrix}
e^{h} & 0\\
0 & e^{-h}
\end{pmatrix}
\ \ \ \text{ and } \ \ \ 
Q\, :=
\begin{pmatrix}
e^{J} & e^{-J} \\
e^{-J} & e^{J}
\end{pmatrix}
= \frac 1{\sqrt{\gep}}\begin{pmatrix}
1 & \gep\\
\gep & 1
\end{pmatrix}
 \, .
\end{equation} 
In other terms, if we index our $2 \times 2$ matrices by $\{+1, -1\} \times \{+1, -1\}$, then ${Z}_{\ell, \err, J, h}^{ab}$ is the coefficient of $Q T_\ell Q T_{\ell+1} Q \ldots Q  T_{\err} Q$ with index $(a,b)$. This holds also when $\err=\ell-1$, in which case the right-hand side of \eqref{eq:transfer-form} reduces to $\langle a \vert Q  \vert b \rangle$.
\medskip

\begin{rem}
\label{rem:h=0}
When $h_n=0$ for every $n$ the matrix product is reduced to a power of the matrix $Q$ and 
\begin{equation}
 Q\, =\, 2 \cosh(J) P\, :=\, 2 \cosh(J) 
  \begin{pmatrix}
p_J & 1- p_J \\
1- p_J & p_J
\end{pmatrix}
\ \text{ with } p_J\, =\, \frac 1{ 1+\exp(-2J)}\, .
\end{equation} 
It is therefore straightforward to see that in this case 
$\bP^{ab}_{1,N, J, h}$ is simply the law of a Markov chain on the state space $\{+1,-1\}$ with transition matrix $P$, initial condition $a$ at time $0$, observed up to time $N$ and conditioned to be in the state $b$ at time $N+1$. If we consider 
the limit $N\to \infty$,  the differences between switching state times is an IID sequence of  geometric random variables of parameter $1-p_J$. 
\end{rem}

\medskip

In view of \eqref{eq:LaRb} and \eqref{eq:transfer-form} we can write
\begin{equation}
\label{eq:LaRb-2.1}
 L_{n,m}^{(a)}\, =\, \frac
{\langle a \vert Q T_n Q  \ldots Q  T_{m} Q \vert + \rangle}
{\langle a \vert Q T_n  Q \ldots Q  T_{m} Q \vert - \rangle}
\,=\,  \frac
{\langle + \vert Q T_m Q  \ldots Q  T_{n} Q \vert a \rangle}
{\langle - \vert Q T_m  Q \ldots Q  T_{n} Q \vert a \rangle}\,  ,
\end{equation}
where we use that $Q$ and the $T_n$'s are self-adjoint.
Note that if we set for $j=0, 1,2, \ldots$
\begin{equation}
X_j\, :=\,  Q T_{n+j} Q T_{n+j-1} Q  \ldots Q  T_{n} Q \vert a\rangle\, ,
\end{equation}
then $(X_j)_{j=0,1, \ldots}$ is a Markov chain on $(0, \infty)^2$ { with initial condition 
 $X_0 = Q T_n  Q \vert a\rangle$ and  
$Q \vert a\rangle= (\exp(Ja),\exp(-Ja))^{\tsp}$.}
Therefore 
$(L_{n, n+j}^{(a)})_{j=0,1, \ldots}$ is just the (Markov) process with state space $(0, \infty)$ that describes the evolution of  the ratio of the two coordinates of $(X_j)_{j=0,1, \ldots}$. Note that with $h\in\bbR$ and $T=T(h)$
\begin{equation}
\label{eq:1-step.0}
QT \begin{pmatrix} x \\ 1 \end{pmatrix}\, =\,  \begin{pmatrix} e^{J+h}x +e^{-J-h} \\ e^{-J+h}x + e^{J-h} \end{pmatrix} 
 \, ,
\end{equation}
which implies that $x \mapsto \left( x+ e^{-2J-2h}\right)/\left(e^{-2J} x+ e^{-2h}\right)$ is the random  one step map from $(0, \infty)$ to $(0, \infty)$  encoding  the evolution of the ratio chain $(L_{n, n+j}^{(a)})_{j=0,1, \ldots}$.
Since $l_{n, n+j}^{(a)}= \log L_{n, n+j}^{(a)}$ 
 it is straightforward  that $(l_{n, n+j}^{(a)})_{j=0,1, \ldots}$ is also a Markov chain. More precisely it is the Markov chain with state space $\bbR$ generated by the one step map 
 (recall that $\gG= 2J$)
\begin{equation}
\label{eq:f_h}
y \mapsto \log \left( \frac{e^y+e^{-\gG -2h}}{e^{-\gG +y} + e^{-2h}} \right)\, =\, y+ 2h+ \log \left( \frac{1+e^{-\gG-(y+2h)}}{1+e^{-\gG +(y+2h)}} \right)\, =:
\, f_h(y) \, =\, b_\gG \circ \theta_h (y)\, ,
\end{equation}
where we have conveniently introduced (see Fig.~\ref{fig:bGamma})
\begin{equation}
\theta_h(y)\, :=\, y+2h\ \ \text{ and } 
b_\gG(y)\, :=\,  y+ \log \left( \frac{1+e^{-\gG-y}}{1+e^{-\gG +y}} \right)\,.
\end{equation}
 \begin{figure}[h]
\begin{center}
\includegraphics[width=16 cm]{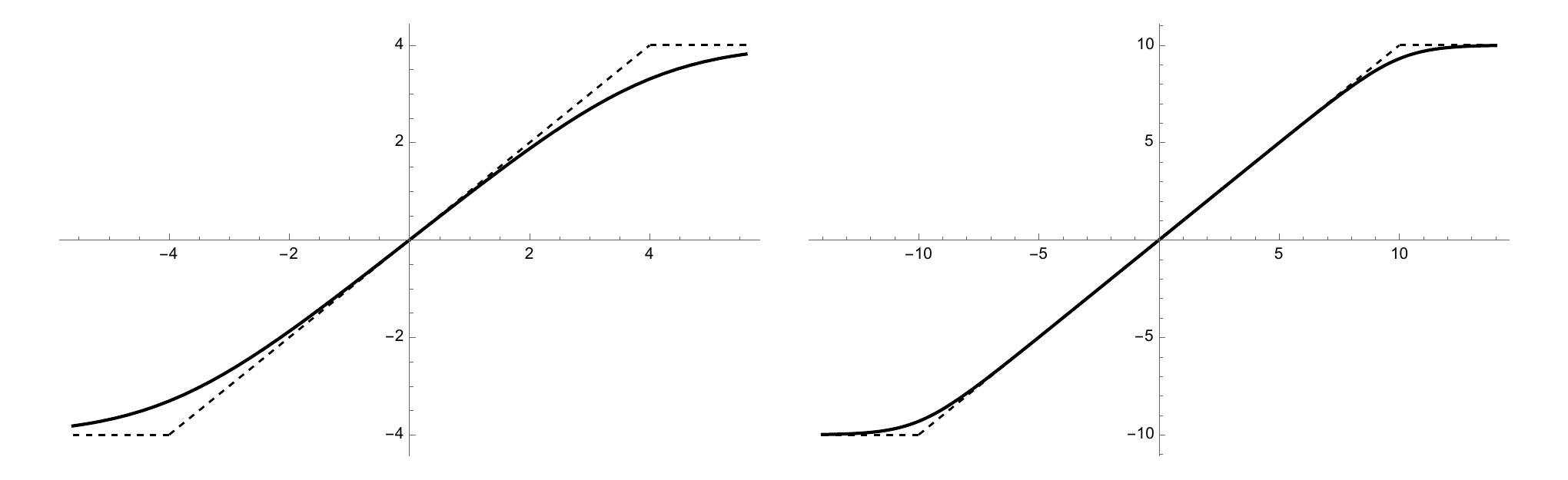}
\end{center}
\caption{\label{fig:bGamma}  
The function $b_\gG$ for $\gG=4$ (on the left) and $\gG=10$ (on the right). The dashed lines plot the function $\widehat{b}_\gG$ introduced in \S~\ref{sec:constrained}.
} 
\end{figure}

Explicitly, for $j=0,1, \ldots$
\begin{equation}
l^{(a)}_{n, n+j}\,=\, f_{h_{n+j}} \circ  f_{h_{n+j-1}} \circ \ldots \circ f_{h_{n}} \left(a \gG\right)\,,
\end{equation}
or (equivalently), for $n\le m$
\begin{equation}
\label{eq:lfkrn}
l^{(a)}_{n, m}\, =\, 
 f_{h_{m}} \circ  f_{h_{m-1}} \circ \ldots \circ f_{h_n} \left(a \gG\right)\, .
\end{equation} 
It is at times practical to speak of Markov chains in terms of their (one step) transition kernel: 
the  transition kernel of $(l_{n, m}^{(a)})_{m=n, n+1, \ldots}$ is  $\pp: \bbR \times \cB(\bbR) \to [0, 1]$ given by  $\pp (l, B):=\bbP( f_{h_1}(l)\in B)$, for every $l\in \bbR$ and every $B\in \cB(\bbR)$.

An important observation at this stage is that $b_\gG$ maps $\bbR$ to $(-\gG, \gG)$,
so the state space of the Markov chain $(l_{n, m}^{(a)})_{m=n, n+1, \ldots}$ may be taken to be  $(-\gG, \gG)$ and not the whole of $\bbR$. 
Consequently,  the domain of the transition kernel can be restricted to $(-\gG, \gG)  \times \cB((-\gG, \gG))$.

\smallskip 

We can now repeat these steps for the $R$ process: from \eqref{eq:LaRb} and \eqref{eq:transfer-form} we have for $n \le m$
\begin{equation}
\label{eq:LaRb-R}
R^{(b)}_{n,m}\, =\, \frac{{Z}_{n,m, J, h}^{+b}}{ {Z}_{n,m, J, h}^{-b}}\,=\, 
\frac{\langle + \vert Q T_n Q  \ldots Q  T_{m} Q \vert b \rangle}
{\langle - \vert Q T_n  Q \ldots Q  T_{m} Q \vert b \rangle}\,,
\end{equation}
and we readily see that this time $(R^{(b)}_{m-j,m})_{j=0,1, \ldots}$ and  
$(r^{(b)}_{m-j,m})_{j=0,1, \ldots}$ are Markov chains. More precisely, for $j=0, 1, \dots$
\begin{equation}
r^{(b)}_{m-j,m}\, =\, 
 f_{h_{m-j}} \circ  f_{h_{m-j+1}} \circ \ldots \circ f_{h_{m}} \left(b \gG\right)\,,
\end{equation} 
or (equivalently), for $n \le m$
\begin{equation}
r^{(b)}_{n,m}\, =\, 
 f_{h_{n}} \circ  f_{h_{n+1}} \circ \ldots \circ f_{h_{m}} \left(b \gG\right)\,.
\end{equation}

\medskip

From \eqref{eq:Psigmaj}, we derive the following formula, which holds for every  $s\in\{+1, 0, -1\}$  and $\ell,  n ,  \err \in \bbZ$ such that $\ell \le n \le \err$:
\begin{equation}
\label{eq:Psigmajneq}
\bP_{ \ell, \err, J, h}^{ab}\left( \gs_n \neq s\right)\, =\, 
\frac {1} {1+ \exp\left(s \,  m^{(ab)}_{\ell, n, \err}\right)}+\frac12 \ind_{s=0}\, ,
\end{equation}
with the notation $m^{(ab)}_{\ell, n, \err}:=l^{(a)}_{\ell, n-1} + 2h_n+ r^{(b)}_{n+1, \err}$. We mention that when $\ell=1$ and $\err=N$, $\bP_{ \ell, \err, J, h}^{ab}$ coincides with $\bP_{N, J, h}^{ab}$ defined in \eqref{eq:Gibbs}.  Note that $s$ may be chosen depending on the sequence $h=(h_n)_{n\in\bbZ}$.

\subsection{The infinite volume system}

We are interested in \eqref{eq:Psigmajneq} with $\ell=1$ and $\err=N$, but we will also look at the limit as $\ell\to -\infty$ and $\err\to \infty$.

\medskip

\begin{lemma}
\label{th:from-infty} For every sequence $(h_n)_{n\in\bbZ}\in\bbR^\bbZ$ we have the following:
for every $m$ the limit $l_m:= \lim_{n \to -\infty} l^{(a)}_{n, m}$ exists and  is independent of $a$. Moreover $(l_m)_{m\in \bbZ}$ is a  sequence that satisfies $l_{m}=f_{h_m}(l_{m-1})$ for every $m\in\bbZ$. 
Likewise, for every $n$, the limit $r_n:= \lim_{m \to +\infty} r^{(b)}_{n, m}$ exists, is independent of $b$ and $r_{n}=f_{h_n}(r_{n+1})$ for every $n\in\bbZ$. 

If $h=(h_n)_{n \in \bbZ}$ is an IID sequence of real random variables, then $(l_n)_{n \in \bbZ}$ and $(r_{-n})_{n\in \bbZ}$ are stationary Markov chains with the 
same transition kernel $\pp$ (given right after \eqref{eq:lfkrn}), which admits a unique invariant probability. 
\end{lemma}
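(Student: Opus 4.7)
The plan is to establish a uniform contraction of the maps $f_h$ on the state space $(-\gG,\gG)$: once $f_h$ is Lipschitz with a constant $\kappa_\gG<1$ that does not depend on $h$, everything in the statement follows from standard iterated-function-system reasoning. The main technical point, and really the only one that requires actual work, is pinning down this explicit Lipschitz bound; everything else will be routine bookkeeping.

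First I would differentiate the explicit expression for $b_\gG$ to obtain
\begin{equation*}
b_\gG'(y)\,=\,\frac{1-e^{-2\gG}}{(1+e^{-\gG-y})(1+e^{-\gG+y})}\,,
\end{equation*}
whose denominator attains its minimum $(1+e^{-\gG})^2$ at $y=0$, yielding $b_\gG'(y)\in(0,\tanh(\gG/2)]$. Since $\theta_h$ is an isometry, $f_h=b_\gG\circ\theta_h$ is Lipschitz with constant $\kappa_\gG:=\tanh(\gG/2)<1$ uniformly in $h\in\bbR$. With this bound in hand, for $n<n'\leq m$, writing $g:=f_{h_{n'-1}}\circ\cdots\circ f_{h_n}$ whose range lies in $(-\gG,\gG)$,
\begin{equation*}
\big|l^{(a)}_{n,m}-l^{(a)}_{n',m}\big|\,\leq\,\kappa_\gG^{m-n'+1}\,|g(a\gG)-a\gG|\,\leq\,2\gG\,\kappa_\gG^{m-n'+1}\,,
\end{equation*}
so $(l^{(a)}_{n,m})_n$ is Cauchy as $n\to-\infty$ and the limit $l_m$ exists. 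The same bound applied to two different boundary values $a$ shows that $l_m$ does not depend on $a$. Passing to the limit $n\to-\infty$ in the trivial identity $l^{(a)}_{n,m}=f_{h_m}(l^{(a)}_{n,m-1})$ via continuity of $f_{h_m}$ gives $l_m=f_{h_m}(l_{m-1})$. The $r$-process is treated symmetrically.

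In the IID case, $l_m$ is a measurable function of $(h_j)_{j\leq m}$ and its construction is covariant with the shift, so stationarity of $(h_n)$ transfers to $(l_m)$. The recursion $l_m=f_{h_m}(l_{m-1})$ combined with $l_{m-1}\in\sigma(h_j:\,j\leq m-1)$ being independent of $h_m$ shows that $(l_m)$ is a Markov chain with kernel $\pp$. For $r_{-n}$, setting $\tilde r_n:=r_{-n}$, the recursion $r_n=f_{h_n}(r_{n+1})$ rewrites as $\tilde r_n=f_{h_{-n}}(\tilde r_{n-1})$ with $\tilde r_{n-1}\in\sigma(h_j:\,j\geq -n+1)$ independent of $h_{-n}$, and the IID assumption makes the kernel again $\pp$.

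For uniqueness of the invariant probability, let $\mu$ be any invariant probability (necessarily supported on $(-\gG,\gG)$) and let $Y\sim\mu$ be independent of $(h_j)_{j\geq 1}$. By invariance, $Y_n:=f_{h_n}\circ\cdots\circ f_{h_1}(Y)\sim\mu$ for every $n$. By the IID reindexing $h_i\leftrightarrow h_{n+1-i}$ inside the composition, $Y_n$ has the same law as the backward iterate $f_{h_1}\circ\cdots\circ f_{h_n}(Y)$, which converges almost surely to the limit identified (up to shift) with $l_0$, by the Cauchy estimate of the previous paragraph applied with $Y$ in place of $a\gG$ (the bound $2\gG\,\kappa_\gG^n$ being independent of the starting point). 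Consequently $\mu=\mathrm{Law}(l_0)$, which determines $\mu$ uniquely.
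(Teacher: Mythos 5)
Your proof is correct and matches the paper's argument: both rest on the uniform contraction constant $b_\gG'(y)\le\tanh(\gG/2)<1$ obtained by differentiating $b_\gG$, the resulting Cauchy estimate that is geometric in the number of composed maps, and reading off the Markov/stationarity properties from the recursion $l_m=f_{h_m}(l_{m-1})$. The only difference is presentational: where the paper defers uniqueness of the invariant probability to a cited textbook theorem on contractive Markov chains, you unfold that theorem's standard backward-iteration argument (forward iterates equal in law to backward iterates under the IID reshuffle, backward iterates converge a.s. independently of the starting point), which is the same mechanism written out.
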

\medskip


\begin{rem}
\label{rem:time-reversal}
Of course the results proven for one of the two processes $l$ or $r$  can be transferred to the other one.
Let us be more explicit about this point (that will be used several times) by introducing the random walk $(\Srev_n)_{n \in \bbZ}$
\begin{equation}
\label{eq:Srev}
\Srev_n\, :=\, S_{-n}\,,
\end{equation}
and $\mathtt{r}_{n}:= r_{-n+1}$. 
 So $\mathtt{r}_n =f_{h_{-n+1}}( \mathtt{r}_{n-1})$, but $h_{-n+1}= -\left( 
 \Srev_n -\Srev_{n-1} \right)$, and in the end we have
 \begin{equation}
 \mathtt{r}_n \, =\, f_{ -\left( 
 \Srev_n -\Srev_{n-1} \right)}( \mathtt{r}_{n-1})\, ,
 \end{equation}
that is to be compared with $l_{n}= f_{S_n-S_{n-1}}\left( l_{n-1} \right)$.
Therefore the (shifted) time reversal of the $r$ process coincides with the $l$ process if we use the increments of $-\Srev$ rather than the increments of $S$ as driving noise. 

{Another useful observation is the following: since the function $b_\gG$ is odd, the process $(-l_n)_{n\in\bbZ}$ coincides with the process $l$ if we use the increments of $-S$ as driving noise.}
\end{rem}
\medskip

\begin{proof}
$b_\gG$ is an odd increasing function and one readily sees that
\begin{equation}
\max_x b_\gG' (x)\, = \, b_\gG' (0)\, =\, \frac{1-\gep}{1+ \gep}\,\le \exp(-\gep) <1\, .
\end{equation}
Therefore $\vert f'_h(x)\vert= f'_h(x)\le \exp(-\gep)$ uniformly in $x$, so
$\vert f_h(x)-f_h(y)\vert \le \exp(-\gep) \vert x-y \vert$ and
\begin{equation}
\label{eq:17JLct}
\sup_{x, y \in [-\gG, \gG]}
 \left \vert f_{h_{m}} \circ  f_{h_{m-1}} \circ \ldots \circ f_{h_n} \left(x\right)- f_{h_{m}} \circ  f_{h_{m-1}} \circ \ldots \circ f_{h_n} \left(y\right)\right\vert
 \, \le \, 2 \gG e^{-\gep (m-n+1)} \, ,
\end{equation}
and from such an estimate it is straightforward to see $\lim_{n \to -\infty} l^{(a)}_{n, m}$ exists for every (deterministic!) choice of the sequence $h\in \bbR^\bbZ$ and that the limit does not depend on $a$. More than that: the limit does not depend on the initial condition 
$l^{(a)}_{n, n}$ which may also depend on $n$ and on the sequence $h$; note that in our case  $l^{(a)}_{n, n}=f_{h_n}(a \gG)$.
It is then straightforward to check that $l_{m}=f_{h_m}(l_{m-1})$ for every $m\in\bbZ$, as well as the other properties we claimed
and this completes the proof. We remark that
the  Markov chain we consider is a special case in the class of \emph{contractive Markov chains}, see for example \cite[Ch.~2]{cf:MC}, in particular Theorem~2.1.9, where the  case
of Markov chains such  that $\bbE[ \log \vert f'_{h_1}(x)\vert] <0$ is considered: 
this \emph{non uniform} contractive property suffices to see that the kernel $\pp$ has a unique invariant probability, to obtain that $\lim_{n \to -\infty} l^{(a)}_{n, m}$ exists a.s.  for every $m$ and that the arising process $l_\cdot$, which does not depend on the value of $a$,  is a stationary Markov chain with kernel $\pp$.
\end{proof}

\medskip

Since it is straightforward to write the probability of every cylindrical (i.e., local) event in $\{-1,1\}^\bbZ$ with the help of the $(l_{n, m}^{(a)})$ and $(r_{n,m}^{(b)})$ processes
(we have done so explicitly in \eqref{eq:Psigmaj}  for the event $\{\gs:\, \gs_j=+1\}$), it is easy  to see that, by Lemma~\ref{th:from-infty}, the two index sequence of probabilities $(\bP^{ab}_{\ell, \err, J, h})$ converges weakly -- the topology on $\{-1,1\}^\bbZ$ is the product one -- as
$\ell \to -\infty$ and $\err \to \infty$ and no matter how the limit is taken. For definiteness we set 
$\bP_{ J, h}:= \lim_{\err \to \infty}\bP^{++}_{-\err, \err, J, h}$, but the limit does not depend on the chosen sequence and it does not depend on the boundary conditions either.
From \eqref{eq:Psigmaj} and Lemma \ref{th:from-infty} we obtain
\begin{equation}
\label{eq:Psigmajinf}
\bP_{ J, h}\left( \gs_n=+1\right)\, =\, 
\frac 1 {1+ \exp\left(-\left(l_{n-1} + 2h_n+ r_{n+1}\right)\right)}\, .
\end{equation}
or equivalently, for every $s\in\{-1,0, +1\}$,
\begin{equation}\label{eq:Psigmajneqinf}
\bP_{ J, h}\left( \gs_n\neq s\right)\, =\, 
\frac {1} {1+ \exp\left(s\left(l_{n-1} + 2h_n+ r_{n+1}\right)\right)}+\frac12\ind_{s=0}\, .
\end{equation}
Thus, we denote for all $n\in\bbZ$, \begin{equation}    m_n:=l_{n-1} + 2h_n+ r_{n+1}, \label{def-mn}\end{equation} in analogy with \eqref{eq:Psigmajneq}.

\subsection{The ergodic system}

As already hinted to, there is an ergodic system in play. Recall that $h$ is the bi-infinite IID sequence $(h_n)_{n\in\bbZ}$. 
We denote $\Theta:\bbR^\bbZ\to \bbR^\bbZ$ the shift application, defined by $\Theta h=(h_{n+1})_{n\in \bbZ}$ if $h=(h_n)_{n\in\bbZ}$. We have of course $\Theta^{\circ m} h=(h_{n+m})_{n\in\bbZ}$ for every $m\in\bbZ$. $\bbR^\bbZ$ is equipped with the product topology and the probability we consider on this space is 
 the law $\bbP$ of the IID sequence $h=(h_n)_{n\in\bbZ}$ and the system is   $\Theta$ invariant. Furthermore the system satisfies the mixing property (because $(h_n)$ is IID), so it is ergodic. Hence, for every $\bbL^1$ function $G:\bbR^\bbZ \to  \bbR$ we have that $\bbP$-almost surely (see for example \cite[Ch.~5]{cf:MC})
\begin{equation}
\label{eq:generalergstat}
\frac1N\sum_{n=1}^N G(\Theta^{\circ n}(h))\underset{N\to\infty}{\longrightarrow} \bbE[G(h)]\, .
\end{equation}
\medskip

For the next result we need the notion of discrepancy density between two configurations: for $\sigma$ and  $\sigma'$ in $ \{-1, 0, +1\}^{\{1, \ldots, N\}}$ or in $\{-1, 0, +1\}^{\bbN}$ we set 
\begin{equation}
\label{eq:D_N}
D_N(\sigma, \sigma')\, :=\, \frac1N \left \vert \left\{ n=1,2 \ldots, N:\, \gs_n \neq \gs'_n\right\}\right\vert = \frac1N \sum_{n=1}^N \ind_{\gs_n\neq \gs_n'}\, .
\end{equation}

\medskip
\begin{proposition}
\label{th:ergstatK}
Consider a measurable  function $K: \bbR^\bbZ\to \{-1,0, +1\}$ and set 
$s^{(K)}_n:=K(\Theta^{\circ n} h)$ for  $n\in\bbZ$.
Then $\bbP(\dd h)$-a.s. (that is, almost surely in the realization of the IID sequence $(h_n)_{n\in\bbZ}$)
\begin{equation}
\label{eq:ergstatK}
D_N\left(\sigma, s^{(K)}\right) \underset{N\to\infty}{\longrightarrow} \bbE\left[\left(1+\exp\left(s^{(K)}_0 m_0\right)\right)^{-1}+\frac12\ind_{s^{(K)}_0 =0}\right]\, ,
\end{equation}
in $\bP^{ab}_{N, J, h}$-probability, as well as in $\bP_{J, h}$-probability.
\end{proposition}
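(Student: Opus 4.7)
I would split the proof into two parts: $\bbP$-a.s.\ convergence of the $\bP^{ab}_{N,J,h}$- and $\bP_{J,h}$-expectations of $D_N(\sigma,s^{(K)})$, and vanishing of their variances (the latter uniformly in $h$, so that Chebyshev delivers convergence in probability).

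\emph{Step 1 (Mean, infinite volume).} Because $s^{(K)}_n=K(\Theta^{\circ n}h)$ depends only on $h$, conditioning on the disorder and applying \eqref{eq:Psigmajneqinf} pointwise yields
\begin{equation*}
\bE_{J,h}\!\left[D_N(\sigma,s^{(K)})\right]=\frac{1}{N}\sum_{n=1}^{N}G(\Theta^{\circ n}h),\qquad G(h'):=\frac{1}{1+\exp(K(h')M(h'))}+\tfrac{1}{2}\ind_{K(h')=0},
\end{equation*}
where $M:\bbR^\bbZ\to\bbR$ is the measurable function provided by Lemma~\ref{th:from-infty} such that $m_n=M(\Theta^{\circ n}h)$ (with $m_0=l_{-1}+2h_0+r_1$ being a measurable functional of $h$, since $l_{-1}$ depends on $(h_k)_{k\le -1}$ and $r_1$ on $(h_k)_{k\ge 1}$). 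Since $G$ is bounded by $3/2$, the ergodic statement \eqref{eq:generalergstat} gives $\bbP$-a.s.\ convergence of this Ces\`aro sum to $\bbE[G(h)]$, which is precisely the right-hand side of \eqref{eq:ergstatK}.

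\emph{Step 2 (Mean, finite volume).} By \eqref{eq:Psigmajneq}, the same identity holds for $\bE^{ab}_{N,J,h}[D_N(\sigma,s^{(K)})]$ with $m_n$ replaced by $m^{(ab)}_{1,n,N}:=l^{(a)}_{1,n-1}+2h_n+r^{(b)}_{n+1,N}$. The deterministic contraction estimate \eqref{eq:17JLct} (together with its time-reversed counterpart from Remark~\ref{rem:time-reversal}) gives
\begin{equation*}
\bigl|l^{(a)}_{1,n-1}-l_{n-1}\bigr|\le 2\gG\, e^{-\gep(n-1)}\quad\text{and}\quad\bigl|r^{(b)}_{n+1,N}-r_{n+1}\bigr|\le 2\gG\, e^{-\gep(N-n)}.
\end{equation*}
Since $x\mapsto(1+e^{sx})^{-1}$ is $1$-Lipschitz uniformly in $s\in\{-1,0,+1\}$, the summand-by-summand discrepancy is a convergent geometric tail from both ends, so a Ces\`aro argument shows that $\bE^{ab}_{N,J,h}[D_N(\sigma,s^{(K)})]$ and $\bE_{J,h}[D_N(\sigma,s^{(K)})]$ have the same $\bbP$-a.s.\ limit.

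\emph{Step 3 (Variance).} Conditionally on $h$, both $\bP^{ab}_{N,J,h}$ and $\bP_{J,h}$ make $\sigma$ a (non-homogeneous) Markov chain driven by the transfer matrices $QT(h_k)$. A direct cross-ratio computation gives projective diameter $4J$, so the Birkhoff contraction coefficient is bounded by $\tanh(J)<1$ uniformly in $h$, yielding an exponential decay of correlations
\begin{equation*}
\bigl|\mathrm{Cov}^{ab}_{N,J,h}\!\left(\ind_{\sigma_n\ne s^{(K)}_n},\ind_{\sigma_m\ne s^{(K)}_m}\right)\!\bigr|\le C\,\tanh(J)^{|m-n|},
\end{equation*}
with $C=C(J)$ (and the same for $\bP_{J,h}$). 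Summing over pairs yields $\mathrm{Var}(D_N)\le C'/N$, and Chebyshev's inequality converts the $\bbP$-a.s.\ convergence of the mean into convergence in probability.

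\emph{Main obstacle.} Steps 1--2 are routine once Lemma~\ref{th:from-infty} is in hand. The only slightly delicate point is Step 3, namely the \emph{uniform in $h$} exponential decay of $\sigma$-correlations under the Gibbs measure; this is however classical for the 1D Ising chain and is in fact an avatar of the same contraction mechanism already exploited in the proof of Lemma~\ref{th:from-infty}.
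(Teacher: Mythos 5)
Your proposal is correct, and Steps~1--2 coincide with what the paper does: write $\bE_{J,h}[D_N]$ as the Ces\`aro average of $G(\Theta^{\circ n}h)$ via \eqref{eq:Psigmajneqinf}, invoke \eqref{eq:generalergstat}, and absorb boundary effects through the uniform contraction estimate \eqref{eq:17JLct}. Note in passing that the Lipschitz constant there, $\max_x b_\gG'(x)=\frac{1-\gep}{1+\gep}$ with $\gep=e^{-\gG}$, is exactly your $\tanh(J)$ (the paper then relaxes it to $e^{-\gep}$), so the Birkhoff cross-ratio computation you sketch and the paper's iterated-map Lipschitz bound are the same estimate in two languages. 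The variance step is organized a little differently: the paper does not phrase things as a covariance decay but writes $\bE^{ab}_{N,J,h}[D_N^2]$ as a sum over ordered pairs $n<m$, factorizes $\bP^{ab}_{N,J,h}(\sigma_n\neq s^{(K)}_n,\,\sigma_m\neq s^{(K)}_m)$ by the Markov property of the Gibbs chain (conditioning on $\sigma_m$), and then replaces each factor by its infinite-volume counterpart using \eqref{eq:17JLct} again, obtaining $\bE^{ab}_{N,J,h}[D_N^2]=(\bE_{J,h}[D_N])^2+O(1/N)$ and hence $\mathrm{Var}^{ab}_{N,J,h}(D_N)=O(1/N)$. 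Your covariance formulation is the more standard packaging of the same contraction mechanism; a small side benefit is that it handles $s^{(K)}_m=0$ automatically (then $\ind_{\sigma_m\neq s^{(K)}_m}\equiv 1$ and the covariance vanishes), whereas the paper's factorization needs to be read with care in that degenerate case since $-s^{(K)}_m$ is not a legal boundary spin. Either route yields the uniform-in-$(h,a,b)$ variance bound, and Chebyshev converts the a.s.\ convergence of the mean into the stated convergence in probability, exactly as you outlined.
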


\medskip 

The proof requires no new ingredients. We however leave it to Section \ref{sec:proofergstat} for presentation purposes.
But let us mention that from the definitions we have just given and by \eqref{eq:Psigmajneqinf} and \eqref{eq:generalergstat}
\begin{equation}\label{eq:ergstatconvinexpect}
\begin{split}
\bE_{J, h}\left[ D_N\left(\sigma, s^{(K)}\right) \right]\, &=\, \frac 1 N\sum_{n=1}^N \frac1{1+\exp\left(s^{(K)}_n m_n\right)}+\frac12\ind_{s^{(K)}_n =0}
\\
&\underset{N \to \infty}{\overset{\mathrm{a.s.}}
 \longrightarrow }
\bbE\left[\frac 1{\left(1+\exp\left(s^{(K)}_0 m_0\right)\right)}+\frac12\ind_{s^{(K)}_0 =0}\right]
\, .
\end{split}
\end{equation}
The completion of the proof (in Section \ref{sec:proofergstat}) is about controlling the boundary effects and obtaining a concentration estimate, that is achieved by a variance estimate.

\subsection{A first discrepancy estimate}

Recall that our aim is to show that the typical configurations under $\bP_{N, J, h}^{ab}$ are close to one given configuration. A natural candidate is the configuration $(s_n)$ such that $s=s_n$ maximizes the right-hand side of \eqref{eq:Psigmajneq} with $\ell=1$ and $\err=N$, that is the configuration equal to the sign of $m^{(ab)}_{1, n, N}$. 
We make the slightly different (and slightly sub-optimal) choice 
\begin{equation}\label{eq:defsm}
s^{(m)}_n\, =\begin{cases} +1&  \text{ if } m_n\ge 0,
\\ -1 &\text{ otherwise,}
\end{cases}
\end{equation}
 for $n\in \bbZ$,  where $m_n$ was defined in \eqref{def-mn}. 
The configuration $s^{(m)}$ on $\bbZ$  satisfies the hypotheses of Proposition \ref{th:ergstatK} and therefore, in $\bP_{N, J, h}^{ab}$-probability, we have that
\begin{equation}
\label{eq:discrepgssm}
D_N(\sigma, s^{(m)}) \underset{N\to \infty}{\longrightarrow} \bbE\left[\frac1{1+\exp\left(s^{(m)}_0 m_0\right)}\right]=\bbE\left[\frac1{1+\exp\left(|m_0|\right)}\right]\, .
\end{equation}

\medskip

\begin{proposition}
\label{th:discrepancy0}
We have that
\begin{equation}
\label{eq:discrepancy0}
\bbE\left[\frac1{1+\exp\left(|m_0|\right)}\right]\overset{\gG \to \infty} = O \left( \frac{\loglog \gG}{\gG} \right)\,.
\end{equation}
\end{proposition}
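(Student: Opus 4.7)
The plan is to upper bound $1/(1+e^{|m_0|})$ by $e^{-|m_0|}$, convert the expectation into a tail integral, and then invoke the small-scale regularity of the invariant probability $\mu$ of the kernel $\pp$ announced in the excerpt as \eqref{eq:fctrl-inv} (a consequence of Lemma~\ref{th:ctrl-inv}). Throughout, let $A_\ast:=\loglog\gG$.

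First I would apply Fubini to obtain
\[
\bbE\Big[\frac{1}{1+e^{|m_0|}}\Big]\,\le\,\bbE\big[e^{-|m_0|}\big]\,=\,\int_0^\infty e^{-s}\,\bbP(|m_0|\le s)\dd s\,.
\]
Recall $m_0=l_{-1}+2h_0+r_1$. By Lemma~\ref{th:from-infty}, $l_{-1}$ is measurable with respect to $(h_n)_{n\le-1}$ and $r_1$ with respect to $(h_n)_{n\ge1}$, while $h_0$ is independent of both, and each of $l_{-1},r_1$ has law $\mu$. Conditioning on $(h_0,r_1)$ therefore gives
\[
\bbP(|m_0|\le s)\,=\,\bbE\Big[\mu\big([-2h_0-r_1-s,\,-2h_0-r_1+s]\big)\Big]\,.
\]

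The crucial input is then the invariant-measure estimate \eqref{eq:fctrl-inv}, which I would read as: there exists $C>0$ such that $\mu([x-A,x+A])\le CA/\gG$ uniformly in $x\in\bbR$, provided $A\ge A_\ast$. Plugging this into the previous display yields $\bbP(|m_0|\le s)\le CA_\ast/\gG$ for $s\le A_\ast$ (by monotonicity) and $\bbP(|m_0|\le s)\le Cs/\gG$ for $s\ge A_\ast$. Splitting the tail integral at $A_\ast$, the piece on $[0,A_\ast]$ is bounded by $CA_\ast/\gG$, and the piece on $[A_\ast,\infty)$ is bounded by $(C/\gG)\int_{A_\ast}^\infty s\,e^{-s}\dd s=O(1/(\gG\log\gG))$. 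Summing produces exactly the $O(\loglog\gG/\gG)$ estimate claimed in \eqref{eq:discrepancy0}.

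The only serious obstacle in this plan is the invariant measure bound \eqref{eq:fctrl-inv} itself. As the authors emphasize, the macroscopic approximation $\mu\approx(\textup{uniform on }[-\gG,\gG])$ is not enough: the small-scale regularity has to be established essentially by hand, and it is delicate --- in particular when the law of $h_1$ has a lattice component, where $\mu$ is not expected to be close to uniform on small scales at all. With that ingredient in hand, everything else in the proof above is elementary. Note also that the scale $\loglog\gG$ emerges naturally as the point at which the bounds $Cs/\gG$ and $e^{-s}$ cross, so any improvement of the small-scale control of $\mu$ would translate into a sharper discrepancy estimate.
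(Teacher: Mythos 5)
Your proof is correct and is essentially the same argument as the paper's: both bound $1/(1+e^{|m_0|})$ by $e^{-|m_0|}$, exploit the independence of $l_{-1}$, $h_0$, $r_1$, and feed in the invariant-measure estimate \eqref{eq:fctrl-inv} on intervals of length $\gtrsim\loglog\gG$, the only cosmetic difference being that you organize the integration as a continuous tail integral split at $A_\ast$ while the paper sums a geometric series over annuli of width $C_2\loglog\gG$. (Just make sure to take $A_\ast$ of order $C_2\loglog\gG$ rather than exactly $\loglog\gG$ so that the interval lengths $2A_\ast$ and $2s$ meet the threshold in \eqref{eq:fctrl-inv}; this changes nothing in the final estimate.)
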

\medskip

The proof of Proposition~\ref{th:discrepancy0} depends on an estimate on the invariant probability for the $\pp$ transition kernel, that is 
 the common law of the random variables  $l_m$ and $r_n$: this estimate, Lemma~\ref{th:ctrl-inv},  is   in section \ref{sec:comp-nu}.  
Here and later on we will use the following result (that can be extracted in a straightforward way from Lemma~\ref{th:ctrl-inv}):
there exist two positive constants $C_1$ and $C_2$ such that for $\gG$ sufficiently large we have
\begin{equation}
\label{eq:fctrl-inv}
\bbP (l_{-1} \in [x, y])\, \le \, C_1 \frac{y-x}\gG\, ,
\end{equation}
for every $x<y$ satisfying
$y-x \ge C_2 \loglog (\gG)$.
We remark that the same estimate holds for  $r_{1}$, because it has the same law as $l_{-1}$, but we are not using this in the proof.

\medskip

\begin{proof}[Proof of Proposition~\ref{th:discrepancy0} using \eqref{eq:fctrl-inv}]
With the constants involved in \eqref{eq:fctrl-inv},
we aim at showing the following \emph{more quantitative} version of \eqref{eq:discrepancy0}: for $\gG$ large
\begin{equation}
\label{eq:claim-r1-1}
\bbE \left[
\frac {1} {1+ \exp\left(\left\vert m_0\right\vert\right)} 
\right]\, \le \, 
3C_1 C_2 \frac {\loglog (\gG)} \gG\, .
\end{equation}
To show \eqref{eq:claim-r1-1} we start by
recalling that $m_0=l_{-1}+2h_0+r_1$ and by observing that the three terms in this sum are independent: in fact, $l_{-1}$ is measurable with respect to $(h_{n})_{n=-1, -2, \dots}$ and $r_1$ is measurable with respect to $(h_n)_{n=1,2, \ldots}$. It is therefore enough to show that for each value $\lambda\in\bbR$:
\begin{equation}
\bbE \left[
\frac {1} {1+ \exp\left(\left\vert l_{-1}+\lambda\right\vert\right)} 
\right]\, \le \, 
3C_1 C_2 \frac {\loglog (\gG)} \gG\, .
\end{equation}
Setting $c_\gG:= C_2 \loglog(\gG)$, let us observe that if we introduce for every $\lambda\in \bbR$ and for $j=0,1, \ldots$
\begin{equation}
A_{\lambda, j}\,:=\, \left \{l\in\bbR:\, j c_\gG \le \vert l+\lambda \vert \le  (j+1) c_\gG  \right\}\, ,
\end{equation}
we have 
\begin{equation}
\bbE \left[
\frac {1} {1+ \exp\left(\left\vert l_{-1}+\lambda\right\vert\right)} 
\right]\,  \le \, 
\sum_{j =0}^\infty \exp\left(-j c_\gG\right) \bbP\left(l_{-1} \in A_{\lambda, j} \right)\, .
\end{equation}
Since $A_{\lambda, j}= [- (j+1)c_\gG -\lambda , - jc_\gG -\lambda]\cup  [jc_\gG -\lambda , (j+1)c_\gG -\lambda]$, { we apply \eqref{eq:fctrl-inv} and get that $\bbP\left(l_{-1} \in A_{\lambda, j} \right)\le 2C_1 C_2  \loglog (\gG)/ \gG $ for every $\lambda$.}  Therefore,   we obtain
 that \eqref{eq:claim-r1-1} holds  for $\gG$ large
 and the proof of Proposition~\ref{th:discrepancy0} is complete. 
\end{proof}

\medskip

\begin{rem}
Under the stronger assumptions of \cite{cf:CGGH}, 
i.e. the existence of $n_0\in \N$ such that the $h_1+\dots +h_{n_0}$ has a density, and the existence of $\xi>5$ such that $h_1$ has a finite $\xi$-th moment,
the bound in Proposition  \ref{th:discrepancy0} can be improved to $O\left(\frac{1}{\gG}\right)$. 
Indeed, in \cite{cf:CGGH}, an estimate (see Equation (1.13) there) is given on the Wasserstein-1 distance between the invariant measure of the Markov chain $(l_n)$ and a \emph{guess} measure which is built to approximate it and is well controlled. 
Since the function $x\mapsto \frac{1}{1+\exp(|x|)}$ is 1-Lipschitz, this estimate, together with the fact that the integral of that function against the \emph{guess} measure is $O\left(\frac{1}{\gG}\right)$, yields the above claim. 
\end{rem}

\medskip

\begin{rem}
\label{rem:gen-erg}
Section~\ref{sec:tm-mainproof} has been developed with the language of Markov chains, and the (one step) Markov character is due to the fact that $(h_n)$ is a sequence of independent random variables. But the first part of Lemma~\ref{th:from-infty} is stated for general deterministic sequences $(h_n)$ and the proof of Prop.~\ref{th:ergstatK} just requires ergodicity 
of the sequence $(h_n)$: as a consequence \eqref{eq:discrepgssm} holds for ergodic sequences. In this generalized ergodic set-up  one deals with a more general discrete time stochastic process, or stochastic dynamical system, always defined by iterated application of a random function. But we stress that
 estimating the right-most term in \eqref{eq:discrepgssm}, like in Proposition~\ref{th:discrepancy0},  demands an understanding of the invariant probability of the stochastic dynamical system, which we achieve under the hypothesis that $(h_n)$ is IID. And obtaining  Theorem~\ref{th:main} demands a substantially deeper understanding of the 
stochastic dynamical system. 
\end{rem}

\section{Fisher domain-wall configuration on $\bbZ$ and reflected random walk}
\label{sec:auxiliary}

In the previous section, we have identified $s^{(m)}$ as a configuration around which the typical configurations are \emph{concentrated} (in the sense of the discrepancy density). This configuration has the disadvantage of being rather implicit: it is a complicated  function of the disorder $(h_n)$, involving Markov chains driven by the disorder and limit procedures. The aim of the rest of the paper is to show that $s^{(m)}$ can be replaced with the Fisher configuration $s^{(F, +)}$ (given in \eqref{eq:sF+}), and this will yield  Theorem~\ref{th:main}

A first step in this program is to define a translation invariant version $s^{(F)}$ of the Fisher configuration $s^{(F, +)}$. 

\subsection{The Fisher domain-wall configuration on $\bbZ$}
\label{sec:Fisher-Z}

We introduce the Fisher configuration $s^{(F)}$ on $\bbZ$ by relying on  the two-sided random walk $S$ defined in \eqref{eq:Sn}. For every $N\in \bbN$ we can repeat the construction of the one-sided  Fisher configuration $s^{(F,+)}$,  replacing 
$(S_n)_{n \in \bbZ}$ with  $(S_{n-N}-S_{-N})_{n \in \bbZ}$. This generates 
a new sequence of $\gG$-extrema locations $\tu_{1,N}(\gG), \tu_{2,N}(\gG), \ldots$, along with the corresponding $+$ sequence
$\tu^+_{1,N}(\gG), \tu^+_{2,N}(\gG), \ldots$.
In order to take into account the translation we have made, the (ordered) set of $\gG$-extrema we are interested in is\begin{equation}
 \tu_{1,N}(\gG)-N \, \le \, \tu^+_{1,N}(\gG)-N\, < \, 
  \tu_{2,N}(\gG)-N \, \le \, \tu^+_{2,N}(\gG)-N\, < \, \ldots
\end{equation}
but it is practical to relabel this sequence so that 
the location of the $\gG$-extremum with label $0$ is the one at the largest negative location. More formally
we introduce $J_N:= \vert \{ j\in \bbN:\, \tu_{j,N}(\gG)-N < 0 \} \vert$ and then we set for $k\in \{ -J_N+1, -J_N+2,  \ldots\}$
\begin{equation}
{\mathbf u}_{k, N}\, :=\,   \tu_{J_N+k,N}(\gG)-N \ \ \text{ and } \ \  {\mathbf u}^+_{k, N}\, :=\,   \tu^+_{J_N+k,N}(\gG)-N\, .
\end{equation}
For the next statement let us introduce $\gO:=\{(h_n)\in \bbR^\bbZ:\, \limsup_n S_{n}=\limsup_n S_{-n}=- \liminf_n S_{n}=- \liminf_n S_{-n}= \infty\}$ 	
so  (with abuse of notation)  $\bbP(\gO)=1$. Restricting to $\gO$ ensures in particular that $({\mathbf u}_{k, N})_k$ and $({\mathbf u}^+_{k, N})_k$
are infinite sequences, for every $N$.

\medskip

\begin{lemma}
\label{th:N-stability} 
Assume that $(h_n) \in \gO$. 
Then for every $k\in \bbZ$ we have the existence of the limits
\begin{equation}
\label{eq:N-stability-existence}
 {\mathbf u}_{k}(\gG) \,:=\, 
\lim_{N \to \infty}  {\mathbf u}_{k, N} 
 \ \ \text { and } \ \   {\mathbf u}^+_{k} (\gG) \,:=\, \lim_{N \to \infty}  {\mathbf u}^+_{k, N} \, ,
\end{equation}
and there exists $\mathtt{s}\in \{-1, 0, +1\}$ such that
$\tu_k(\gG)= {\mathbf u}_{k+\mathtt{s}}(\gG)$, as well as $\tu^+_k(\gG)= {\mathbf u}^+_{k+\mathtt{s}}(\gG)$, for every
$k\ge \max(1, 1-\mathtt{s})$.
\end{lemma}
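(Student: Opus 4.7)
The plan is to prove the lemma in two stages: first, to show that for each $k\in\bbZ$ the integer sequences $({\mathbf u}_{k,N})_N$ and $({\mathbf u}^+_{k,N})_N$ are eventually constant in $N$, and second, to compare the resulting limits with the one-sided sequence $(\tu_k(\Gamma))_{k\ge 1}$ produced by the procedure started at $0$.

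The cornerstone is a \emph{locality} property of the construction of \S~\!\ref{sec:Fisher}. Write $\ttt_j(\Gamma;t)$, $\tu_j(\Gamma;t)$, $\tu_j^+(\Gamma;t)$ for the analogues of $\ttt_j(\Gamma)$, $\tu_j(\Gamma)$, $\tu_j^+(\Gamma)$ obtained when the procedure is started at $t$ rather than at $0$. Directly from the recursive definitions \eqref{eq:completenotation1}--\eqref{eq:new-u}, once a $\ttt$-time $\ttt_j(\Gamma;t)=T$ has been produced, every subsequent quantity (with index $>j$) is a function of $(S_n)_{n\ge T}$ and of the parity of $j$ alone, the parity dictating whether the next search is for a $\Gamma$-decrease or a $\Gamma$-increase. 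Consequently, if two procedures from $t_1<t_2$ ever share a $\ttt$-time $T$ at indices of matching parity, they coincide on all extrema thereafter (up to a fixed integer index shift).

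For the first stage, fix $(h_n)\in\gO$ and a window $[-M,M]$. Since $\gO$ imposes $\limsup_{n\to -\infty}S_n=+\infty$ and $\liminf_{n\to -\infty}S_n=-\infty$, I would construct an integer $T_0<-M$ that is a $\ttt$-time of the procedure started at $-N$, with a fixed parity, for every $N$ large enough. A sufficient configuration is: pick $s_-<s<T_0$ with $S_s-S_{s_-}\ge \Gamma$ (a $\Gamma$-increase culminating at $s$), $S_s-S_{T_0}\ge \Gamma$ with $T_0$ the first time of a $\Gamma$-decrease based at $s$, and such that the walk strictly to the left of $s_-$ eventually exceeds every value taken on $[s_-,T_0]$; this last requirement is guaranteed by $\limsup_{n\to -\infty}S_n=+\infty$. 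The left-tail condition ensures that for $N$ large enough no competing $\ttt$-time based at a point of $(-N,s_-]$ can be produced before $T_0$, and also forces $T_0$ to be reached with a parity independent of $N$. By the locality principle, the extrema produced by the procedures started at $-N$ in $[T_0,\infty)$ coincide for every $N\ge N_0(M)$; in particular every ${\mathbf u}_{k,N}$ and ${\mathbf u}^+_{k,N}$ lying in $[-M,M]$ is constant in $N\ge N_0(M)$. Sending $M\to\infty$ yields the existence of the limits ${\mathbf u}_k(\Gamma)$ and ${\mathbf u}^+_k(\Gamma)$ for every $k\in\bbZ$.

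For the second stage, apply the same synchronization argument to the pair of procedures started at $-N$ (for large $N$) and at $0$: they share a common $\ttt$-time somewhere in $[0,\infty)$ at indices of matching parity, so from that point on their $\tu$- and $\tu^+$-sequences agree up to a constant shift. The shift is at most one in absolute value because between any two consecutive $\ttt$-times of one procedure there is at most one $\ttt$-time of the other. The exact value $\mathtt{s}\in\{-1,0,+1\}$ is then read off from the case distinction of Remark~\ref{rem:arbitrary-choice} applied to the starting point $0$, namely whether $\ttt^\downarrow(\Gamma)<\ttt^\uparrow(\Gamma)$ or the reverse, together with whether the two-sided first non-negative extremum ${\mathbf u}_1(\Gamma)$ is a $\Gamma$-maximum or a $\Gamma$-minimum. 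The main obstacle is the parity control in the synchronization step: a priori the index at which a fixed $T_0$ appears as a $\ttt$-time of the procedure from $-N$ could shift by one as $N$ grows. This is addressed by the dominating-oscillation condition on the walk to the left of $s_-$, which forces the alternating pattern of $\Gamma$-decreases and $\Gamma$-increases to be rigid at $T_0$ for all sufficiently early starting points.
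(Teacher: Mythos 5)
Your proof takes essentially the same route as the paper's: establish a synchronization or locality property for the $\Gamma$-extrema procedure, by exhibiting an anchor — a two-sided $\Gamma$-extremum certified by a bracketing $\Gamma$-descent and $\Gamma$-ascent — that every procedure started sufficiently far to the left must detect, so that all such procedures coincide from that point on (up to a constant index shift), and then pin down the shift $\mathtt{s}$ by a case analysis at the origin. The first-stage and second-stage structure matches the paper's.

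The main differences are in execution, and a couple of steps deserve scrutiny. The paper reuses the extremum $\mathbf{u}_{j_{N,L}+1,N}$ already produced by the procedure from $-N$ as its anchor, whereas you construct a fresh anchor $T_0$ from scratch; this is workable but heavier. More importantly, the extra hypothesis you impose (that the walk strictly left of $s_-$ eventually exceeds every value on $[s_-,T_0]$) is presented as the ingredient controlling the parity at which $T_0$ is reached, but you never explain \emph{how} it does so, and I do not think it does the work you attribute to it. The cleaner observation — implicit in the paper's one-liner ``regardless from where the sequential procedure starts, the $\Gamma$-minimum is going to be detected'' — is that whether a site is picked up as a $\Gamma$-maximum or a $\Gamma$-minimum is intrinsic to the trajectory (determined by the orientation of the bracketing $\Gamma$-slopes), not by the index $j$ at which a particular run reaches it. Once that is said, the parity concern dissolves and the extra condition is superfluous. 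Finally, in the second stage you assert that ``between any two consecutive $\ttt$-times of one procedure there is at most one $\ttt$-time of the other'' to bound $|\mathtt{s}|\le 1$; this is left unjustified, and the paper's argument instead proceeds by a direct and exhaustive case analysis based on $\ttt^\downarrow(\Gamma)$ versus $\ttt^\uparrow(\Gamma)$ and on whether $\tu_1(\Gamma)$ is a $\Gamma$-extremum over $\bbZ$, which both verifies $\mathtt{s}\in\{-1,0,+1\}$ and identifies the value of $\mathtt{s}$ in each case. Filling in these two gaps — replacing the unexplained parity condition by the intrinsic-orientation observation, and carrying out the case analysis for $\mathtt{s}$ — would bring your argument in line with the paper's.
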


\medskip

In short, Lemma~\ref{th:N-stability}  builds the bilateral sequence of $\gG$-extrema and the   $\gG$-extrema at non-negative locations
 coincide with the one-sided sequence of $\gG$-extrema, except possibly for the first entry. And, in any case, at $(\tu_k)_{k=2,3, \ldots}$
 there are $\gG$-extrema both in the one and two-sided case.
\medskip

\begin{proof}
First of all \eqref{eq:N-stability-existence} directly follows because
 for every $L \in \bbN$ and  every $M> N \ge L$ we have
\begin{equation}
\label{eq:N-stability2}
{\mathbf u}_{k, M}\, =\, {\mathbf u}_{k, N} 
 \ \ \text { and } \ \ {\mathbf u}^+_{k, M}\, =\, {\mathbf u}^+_{k, N} 
\end{equation}
for $k> \min\{j:\, {\mathbf u}_{j, N}\ge -L\}=: j_{N,L}$.
In fact, \eqref{eq:N-stability2} holds since there is  a $\gG$-extremum at ${\mathbf u}_{j_{N, L}, N}$ 
(say, a maximum: the argument is strictly analogous if it is a minimum) so there exists $n\in [\![ {\mathbf u}_{j_{N, L}, N}, {\mathbf u}_{j_{N, L}+1, N}]\!]$ such that $S_n-S_{ {\mathbf u}_{j_{N, L}+1, N}}\ge \gG$. And of course there is
$n'\in [\![ {\mathbf u}_{j_{N, L}+1, N}, {\mathbf u}_{j_{N, L}+2, N}]\!]$ such that $S_{n'}-S_{ {\mathbf u}_{j_{N, L}+1, N}}\ge  \gG$, therefore 
 it follows from the construction in Section~\ref{sec:Fisher} that, regardless from where the sequential procedure to discover
 the $\gG$-extrema starts (before $-L$), the $\gG$-minimum at  ${\mathbf u}_{j_{N, L}+1, N}$ is going to be detected.
  
 For the second part of the statement we start from the fact that at $\tu_2(\gG)$ there is a $\gG$-minimum for the one-sided case (by definition), but also for the two-sided cases because of  the argument we just developed. Then recall the notation introduced in \eqref{eq:t-down-up}:
 either $\ttt^\downarrow(\gG)< \ttt^\uparrow(\gG)$ or $\ttt^\downarrow(\gG)> \ttt^\uparrow(\gG)$.
\smallskip

\begin{itemize}[leftmargin=0.5 cm]
\item  If $\ttt^\downarrow(\gG)< \ttt^\uparrow(\gG)$ then either $\mathtt{u}_1 =\mathbf{u}_1$ and in this case $\mathtt{u}_k=\mathbf{u}_k$ and $\mathtt{u}^+_k=\mathbf{u}^+_k$ for every $k \in \bbN$, or
at $\mathtt{u}_1$ is not the location of a  $\gG$-extremum in $\bbZ$ and $\mathtt{u}_{k+1}=\mathbf{u}_k$, as well as $\mathtt{u}^+_{k+1}=\mathbf{u}^+_k$,  for every $k \in \bbN$.
\item  If $\ttt^\downarrow(\gG)> \ttt^\uparrow(\gG)$ then $\mathtt{u}_1$ is the location of a $\gG$-maximum also in the whole of $\bbZ$. 
If in $[\![0, \mathtt{u}_1]\!]$ there is no $\gG$-minimum (in $\bbZ$) then $\mathtt{u}_k=\mathbf{u}_k$ and $\mathtt{u}^+_k=\mathbf{u}^+_k$ for every $k \in \bbN$. Otherwise
$\mathtt{u}_k=\mathbf{u}_{k+1}$ and $\mathtt{u}^+_k=\mathbf{u}^+_{k+1}$ for every $k \in \bbN$.
\end{itemize}

This completes the proof of Lemma~\ref{th:N-stability}.
\end{proof}

\medskip

For $(h_n) \in \gO$  and  $n \in \bbZ$ we set
\begin{equation}
\label{eq:sF}
 s_n^{(F)}\, :=\, \begin{cases}
 +1 & \text{ if there exists } k \text{ such that }  
 n \in [\![{\mathbf u}^+_k(\gG)+1,  {\mathbf u}_{k+1}(\gG)]\!],
 \\ &  \ \ \ {\mathbf u}^+_k(\gG) \text{ is the location of a }\gG\text{-minimum and } S_{  {\mathbf u}_{k+1}(\gG)}-S_{{\mathbf u}^+_k(\gG)}>\gG
 \\
 -1 & \text{ if there exists } k \text{ such that } 
 n \in [\![{\mathbf u}^+_k(\gG)+1, {\mathbf u}_{k+1}(\gG)]\!],
  \\ &  \ \ \  {\mathbf u}^+_k(\gG) \text{ is the location of a }\gG\text{-maximum and } S_{{\mathbf u}^+_k(\gG)}-S_{  {\mathbf u}_{k+1}(\gG)}>\gG
 \\ \ 0 & \text{ otherwise.}
 \end{cases}
\end{equation} 
If $(h_n) \not\in \gO$ we set $s_n^{(F)}=0$ for every $n$.
Note that a direct consequence of Lemma~\ref{th:N-stability} is that in $\gO$ we have 
\begin{equation}
\label{eq:stability-+}
 s_n^{(F)}\, =\,  s_n^{(F,+)} \ \text{ for } \ n \ge \tu_2\, ,
\end{equation}
and we recall that $s_n^{(F,+)}$ is given in \eqref{eq:sF+}.

\begin{rem}\label{rem:defFisher}
In Remark \ref{rem-descrground}, we will describe the ground state configurations of the infinite-volume RFIC (in some sense, the ``maximisers'' of the Hamiltonian). 
As it appears, there are sites to which the ground state configurations do not all assign the same spin : 
in the case of multiple $\gG$-extrema, i.e. when $\mathtt{u}_k(\gG)<\mathtt{u}_k^+(\gG)$,  
or  when a $\gG$-slope has height exactly equal to $\gG$, there are several configurations with the same ``maximal'' energy. 
Instead of taking $ s^{(F)} $ to be one arbitrarily chosen ground state configuration, we decided  to set $s_n^{(F)}= 0$ at sites where this ambiguity occcurs. Of course, for those $n$'s we will have automatically $\sigma \neq s_n^{(F)}$, so the version of Theorem \ref{th:main} that we decided to state and prove is stronger than for any arbitrary choice of $ s^{(F)} $ among ground state configurations.
Our analysis will show in particular that the regions where we set $s_n^{(F)}=0$ have a density dominated by ${\loglog\gG}/ \gG$.
\end{rem}

\subsection{The (hard wall) reflected random walk}
\label{sec:constrained}

In strict analogy with the processes  $(l_n)$ and $(r_n)$ in Section~\ref{sec:tm} we  introduce also the Markov chain
\begin{equation}
\label{eq:MCDShat}
\ls_{n}\, =\, \, \widehat f_{h_{n}}\left( \ls_{n-1}\right) \, ,
\end{equation} 
as well as the process
\begin{equation}
\rs_{n}\, =\, \widehat f_{h_{n}}\left( \rs_{n+1}\right)\, ,
\end{equation} 
with $f_{h_{n}}(x) := \widehat b_\gG \circ \theta_{h_n}(x)$, where 
\begin{equation}
\widehat b_\gG(x) :=
\begin{cases}
-\gG & \text{if } x\le -\gG,\\
x & \text{if } -\gG \le x \le \gG,\\
\gG & \text{if } x\ge \gG,
\end{cases}
\qquad
\text{and, as before,}
\qquad 
\theta_{h_n}(y) : = y+ 2 h_n,
\end{equation}
see Fig.~\ref{fig:bGamma}. 

In particular the time reversal discussion that one can find at the end of  Section~\ref{sec:tm}
applies also to $(\ls_n)$ and $(\rs_n)$, so results established on $(\ls_n)$ can be transferred to  $(\rs_n)$ via  time reversal.
\medskip

\begin{rem}
In Appendix \ref{sec:appendzerotemp}, we will illustrate the fact that the processes $\ls$ and $\rs$ can be seen as zero-temperature analogues of the processes $l$ and $r$.
\end{rem}

We consider both $\ls$ and $\rs$ to be running at \emph{stationarity}: the uniqueness of the invariant measure is given in the next statement.
\medskip

\begin{lemma}
\label{th:stationary-hat}
The Markov chain defined by the random iteration \eqref{eq:MCDShat} has a unique invariant probability and there exists 
a unique stationary Markov chain $(\ls_n)_{n \in \bbZ}$ that obeys \eqref{eq:MCDShat} for every $n$.
\end{lemma}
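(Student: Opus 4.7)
The plan is to treat existence on the compact state space $[-\gG,\gG]$ via Krylov--Bogolyubov, and then to derive uniqueness of the invariant probability together with the construction of the stationary chain on $\bbZ$ from a single Doeblin-type synchronization event that exploits the flat pieces of $\widehat b_\gG$. Since $\widehat b_\gG$ is continuous and takes values in $[-\gG,\gG]$, the map $\widehat f_h=\widehat b_\gG\circ \theta_h$ defines a Feller kernel on the compact set $[-\gG,\gG]$, and existence of an invariant probability $\widehat\mu$ is immediate.

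The key step I would establish is the following synchronization estimate. Because $\bbE[h_1]=0$ and $\vartheta^2>0$, the variable $h_1$ is centered and non-constant, so there exists $\epsilon>0$ with $\bbP(h_1\ge \epsilon)>0$. Setting $n_0:=\lceil \gG/\epsilon\rceil$ and $A:=\{h_j\ge \epsilon \text{ for } j=1,\ldots,n_0\}$, on $A$ each application of $\theta_{h_j}$ adds at least $2\epsilon$ and after $n_0$ such applications the argument is $\ge \gG$; the intervening truncations by $\widehat b_\gG$ only accelerate this, so $\widehat f_{h_{n_0}}\circ\cdots\circ\widehat f_{h_1}$ is identically equal to $\gG$ on $[-\gG,\gG]$. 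Driving two copies $\ls^x,\ls^y$ of the chain with the same noise, one gets $\ls^x_{n_0}=\ls^y_{n_0}=\gG$ on $A$, and the equality propagates to all later times. Iterating on disjoint blocks of length $n_0$ and using block independence yields
\begin{equation*}
\bbP(\ls^x_n\neq \ls^y_n)\, \le\, (1-\bbP(A))^{\lfloor n/n_0\rfloor}\, .
\end{equation*}
Testing any two invariant laws on a bounded Lipschitz function and sending $n\to\infty$ then gives uniqueness of $\widehat\mu$.

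For the stationary chain on $\bbZ$ I would apply the second Borel--Cantelli lemma to the independent shifted events $A_k:=\{h_j\ge \epsilon, \ j\in \lbra -kn_0+1,-(k-1)n_0\rbra\}$, $k\ge 1$. Almost surely infinitely many $A_k$ occur, and on $A_k$ the block $\widehat f_{h_{-(k-1)n_0}}\circ\cdots\circ\widehat f_{h_{-kn_0+1}}$ is the constant map equal to $\gG$. Consequently, for each fixed $n\in\bbZ$, the sequence $\Phi^{(m,n)}(x):=\widehat f_{h_n}\circ\cdots\circ\widehat f_{h_{m+1}}(x)$ is eventually constant as $m\to -\infty$, and its value does not depend on $x\in[-\gG,\gG]$. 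I define $\ls_n$ as this a.s.\ limit. By construction $\ls_n=\widehat f_{h_n}(\ls_{n-1})$ for every $n\in\bbZ$, and the whole process is stationary because the construction commutes with the shift $\Theta$ of $\bbP$. Any other stationary chain $(L_n)_{n\in\bbZ}$ satisfying $L_n=\widehat f_{h_n}(L_{n-1})$ obeys $L_n=\Phi^{(m,n)}(L_m)$ for every $m\le n$; choosing $m\le -kn_0$ on $A_k$ forces $L_n=\ls_n$, so the two chains agree a.s.

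The main obstacle is that, in contrast with the map $b_\gG$ analysed in Lemma \ref{th:from-infty}, the map $\widehat b_\gG$ is only $1$-Lipschitz (its derivative equals $1$ on the whole of $(-\gG,\gG)$), so the strict contraction argument used in the proof of Lemma \ref{th:from-infty} is unavailable. One must exploit the two flat pieces of $\widehat b_\gG$ at the walls, and the event $A$ is the simplest way to do so: a single aligned block of $n_0$ increments pins any initial condition at $\gG$ and erases all prior memory, which is what drives both the uniqueness of $\widehat\mu$ and the construction of the bi-infinite stationary chain.
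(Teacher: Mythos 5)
Your proof is correct and follows essentially the same strategy as the paper: existence of $\widehat\mu$ via the Feller property on the compact state space $[-\gG,\gG]$, then uniqueness and the bi-infinite stationary construction via a coupling-from-the-past argument that exploits the flat pieces of $\widehat b_\gG$ (once the chain is pushed to a wall $\pm\gG$ it forgets its initial condition). The only substantive difference is the choice of synchronization event: the paper uses the first $\gG$-increase or $\gG$-decrease time of the walk $S$ after $-N$ (a.s.\ finite because $\limsup S_n=-\liminf S_n=\infty$), while you use a fixed block of $\lceil\gG/\epsilon\rceil$ consecutive increments $\ge\epsilon$ together with the second Borel--Cantelli lemma -- a stricter sub-event, but one that is self-contained and avoids invoking the oscillation of $S$, so the two implementations are equally valid.
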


\medskip

By time inversion, Lemma~\ref{th:stationary-hat} provides the definition of  the stationary process $(\rs_n)_{n \in \bbZ}$.

\medskip

 \begin{proof} 
 Since $\widehat f_h(\cdot)$ is $C^0$ for every $h\in \bbR$, one readily sees that this Markov chain is Feller, i.e. 
 $x \mapsto \bbE\left[F(\widehat f_{h_1}(x))\right]$ is $C^0$ (and bounded) whenever $x \mapsto F(x)$ is $C^0$ and bounded. 
 Since the state space $[-\gG, \gG]$ is compact one readily infers that there exists an invariant probability (see for example \cite[Sec.~12.3]{cf:MC}).
 Let us  remark that if
the Markov chain $\left(\ls_{n,-N,x}\right)_{n=-N, -N+1, \ldots}$ is defined by the same recursion obeyed by the $\ls$ chain, but with
$\ls_{-N,-N,x}= x \in [-\gG,\gG]$, then $\lim_{N \to \infty} \ls_{n,-N,x}$ exists a.s. and it is independent of the value of $x$. 
To see this it suffices to observe  that if  we set 
\begin{equation}
 \label{eq:completenotation1-2}
 \ttup(\gG,-N)\,:=\, \inf\left\{n> -N: S^\uparrow_{-N, n}\ge\gG\right\} \ \text{ and }
 \ \ttdown(\gG, -N)\,:=\, \inf\left\{n> -N: S^\downarrow_{-N, n}\ge \gG\right\},
 \end{equation}
then if $n\ge  \ttup(\gG,-N) \wedge \ttdown(\gG, -N)$ we have that $\ls_{n,-N,x}$ becomes independent of the value of $
x$. In particular we can consider a process
$\ls_{k, -M,y}$ with $M>N$, for $k=-N,-N+1, \ldots$ and $\ls_{k,-M,y}=\ls_{n, -N,x}$  for every  $n\ge  \ttup(\gG,-N) \wedge \ttdown(\gG, -N)$. This implies that $\lim_{N \to \infty}\ls_{n, -N,x}=: \ls _n$ exists a.s. for every $n$ and it is straightforward to check that 
$\ls_{n+1}= \widehat f_{h_{n+1}}\left(\ls_n\right)$. Moreover, the very same coupling argument yields uniqueness of the invariant probability, trajectorial uniqueness of the process and that 
$\left(\ls_n\right)_{n \in \bbZ}$ is stationary. 
\end{proof}

\smallskip

We are now going to state three results  on the reflected walks $\ls$ and $\rs$ and then show that Theorem~\ref{th:main} is a consequence of these results.

The first one 
    gives a rather explicit formula for $\ls_0$ and $\rs_1$ (and, by translation, one can obtain a formula for $\ls_n$ and $\rs_n$, $n\in \bbZ$).
For this we recall the notations given in \eqref{eq:t-down-up} and 
add
\begin{equation}\label{eq:defudown}
\tudown(\gG)\,:=\, \min\left\{ n \in \lbra 0,  \ttt^\downarrow(\gG)\rbra: S_n= \max_{i \in \lbra 0, \ttt^\downarrow(\gG)\rbra} S_i\right\}\,,
\end{equation}
as well as 
\begin{equation}
\tuup(\gG)\,:=\, \min\left\{ n \in \lbra 0,  \ttt^\uparrow(\gG)\rbra: S_n= \min_{i \in \lbra 0, \ttt^\uparrow(\gG)\rbra} S_i\right\}\,.
\end{equation}

\noindent Recall \eqref{eq:t-down-up} for $\ttt^\downarrow(\gG)$  and $\ttt^\uparrow(\gG)$. By replacing $S$ with $\Srev$, the time-reversed random walk defined in \eqref{eq:Srev},  in the definitions of $\tudown(\gG)$, $\tuup(\gG)$,   $\ttt^\downarrow(\gG)$  and $\ttt^\uparrow(\gG)$, we add the subscript $\Srev$ to the corresponding random variables and obtain the following definitions: \begin{equation} \label{def-supdown}
\mathtt{s}^\uparrow (\gG)\, :=\, -\ttup_{\Srev}(\gG),
\ 
\mathtt{s}^\downarrow (\gG)\, :=\, -\ttdown_{\Srev}(\gG),
\
\mathtt{v}^\downarrow (\gG)\, :=\,-\tudown_{\Srev}(\gG) \text{ and }
\mathtt{v}^\uparrow (\gG)\, :=\,-\tuup_{\Srev}(\gG)\, .
\end{equation}

\noindent See Figure \ref{fig:l0} for an illustration of  $\mathtt{s}^\downarrow (\gG)$ and $\mathtt{v}^\downarrow (\gG)$. 
\medskip

\begin{proposition}
\label{th:explicit-hat}
We have that 
\begin{equation}
\label{eq:explicit-hat-l}
\widehat l_0\, =\begin{cases}
 +\gG -  2 S_{\mathtt{v}^\downarrow (\gG)}
 & \text{ if } \mathtt{s}^\downarrow(\gG)> \mathtt{s}^\uparrow(\gG)\, ,
\\ 
 -\gG -  2 S_{\mathtt{v}^\uparrow (\gG)}
& \text{ if } \mathtt{s}^\uparrow(\gG)> \mathtt{s}^\downarrow(\gG)\, ,
\end{cases}
\end{equation}
and 
\begin{equation}
\label{eq:explicit-hat-r}
 \rs_1\, =\begin{cases}
 -\Gamma+2 S_{\tudown(\gG)}
 & \text{ if } \ttdown(\Gamma)<\ttup(\Gamma)\, ,\\
 +\Gamma+2 S_{\tuup(\gG)}
  &  \text{ if } \ttup(\Gamma)<\ttdown(\Gamma)\, .
\end{cases}
\end{equation}
\end{proposition}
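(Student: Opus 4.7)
My plan is first to reduce \eqref{eq:explicit-hat-r} to \eqref{eq:explicit-hat-l}. By Remark~\ref{rem:time-reversal}, the time-reversed chain $n \mapsto \rs_{-n+1}$ satisfies the $\ls$-recursion driven by the increments of $-\Srev$; since $\widehat b_\gG$ is odd, the $\ls$-process driven by $-\Srev$ equals $-1$ times the $\ls$-process driven by $\Srev$, so $\rs_1 = -\ls_0$ for the $\ls$-chain with driving walk $\Srev$. Because $S$ is the reversal of $\Srev$, applying \eqref{eq:explicit-hat-l} to $\Srev$ in place of $S$ and translating each $\Srev$-quantity back into an $S$-quantity (the dichotomy $\mathtt{s}^\downarrow > \mathtt{s}^\uparrow$ for $\Srev$ becomes $\ttdown(\gG) < \ttup(\gG)$, and the value $\Srev_{\mathtt{v}^\downarrow(\gG)\text{ applied to }\Srev}$ becomes $S_{\tudown(\gG)}$, with the analogous statement for the other case) produces exactly \eqref{eq:explicit-hat-r}. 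Within \eqref{eq:explicit-hat-l} itself, replacing $S$ by $-S$ and using the oddness of $\widehat b_\gG$ swaps the two cases, so it suffices to establish the first line of \eqref{eq:explicit-hat-l}, i.e.\ the case $\mathtt{s}^\downarrow(\gG) > \mathtt{s}^\uparrow(\gG)$.

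I would set $\tau' := \mathtt{s}^\downarrow(\gG)$ and $\tau := \mathtt{v}^\downarrow(\gG)$, which satisfy $\tau' < \tau \le 0$ almost surely. Unpacking the definitions of $\ttdown_{\Srev}$ and $\tudown_{\Srev}$ on the interval $[\tau', 0]$ yields the following deterministic structural facts for $S$: $S_\tau - S_{\tau'} \ge \gG$, with the upward range of $S$ on $[\tau', 0]$ first reaching $\gG$ exactly at the pair $(\tau', \tau)$; $\tau$ is the rightmost maximizer of $S$ on $[\tau', 0]$; $S_{\tau'}$ is the minimum of $S$ on $[\tau', 0]$; and the downward range $\max\{S_m - S_n : \tau' \le m \le n \le 0\}$ is strictly less than $\gG$ (the last fact is where the hypothesis $\mathtt{s}^\downarrow > \mathtt{s}^\uparrow$ enters).

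The proof proper uses the coupling-from-the-past construction behind Lemma~\ref{th:stationary-hat}: I pick $M$ large enough that $-M < \tau'$ and any initial $x \in [-\gG, \gG]$, so that $\ls_0 = \ls_0^{(-M, x)}$, and denote $y_0 := \ls_{\tau'}^{(-M, x)} \in [-\gG, \gG]$. The crucial observation is that $n \mapsto \ls_n - 2 S_n$ is constant on segments with no clipping, strictly decreases at each top clipping ($\ls_n = \gG$), and strictly increases at each bottom clipping ($\ls_n = -\gG$). Combining this monotonicity with the structural facts, the argument splits into four steps. (i) No bottom clipping occurs on $[\tau', 0]$: before any top clipping $\ls_n = y_0 + 2(S_n - S_{\tau'}) \ge -\gG$ since $S_{\tau'}$ is the minimum, and after a top clipping at some $\sigma$, $\ls_n = \gG + 2(S_n - S_\sigma) > -\gG$ by the downward-range bound. (ii) At least one top clipping occurs on $[\tau', \tau]$, because the unclipped walk attains at $n = \tau$ the value $y_0 + 2(S_\tau - S_{\tau'}) \ge -\gG + 2\gG = \gG$. (iii) Letting $\sigma := \max\{n \in [\tau', \tau] : \ls_n = \gG\}$, if $\sigma < \tau$ then on $(\sigma, \tau]$ the process evolves freely with $\ls_n < \gG$, which forces $\ls_\tau = \gG + 2(S_\tau - S_\sigma) < \gG$ and therefore $S_\tau < S_\sigma$, contradicting that $\tau$ is the maximum of $S$; hence $\sigma = \tau$ and $\ls_\tau = \gG$. (iv) On $(\tau, 0]$ no top clipping is possible since $S_n \le S_\tau$, and no bottom clipping occurs by the downward-range bound, so $\ls_0 = \gG + 2(S_0 - S_\tau) = \gG - 2 S_{\mathtt{v}^\downarrow(\gG)}$, as claimed.

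I expect the main technical subtlety to be the possible occurrence of several top clippings in $[\tau', \tau]$, each arising from a local excursion of $S$ near its running maximum; the monotonicity of $\ls_n - 2 S_n$ is precisely the bookkeeping device that lets me bypass all intermediate ones and identify the last top clipping with $\tau$.
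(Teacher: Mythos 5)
Your proof is correct and follows essentially the same route as the paper's: reduce \eqref{eq:explicit-hat-r} to \eqref{eq:explicit-hat-l} by time reversal (Remark~\ref{rem:time-reversal}), reduce the second case of \eqref{eq:explicit-hat-l} to the first by the $S\mapsto -S$ symmetry, and then in the remaining case show that $\ls$ is pinned at $\gG$ at time $\mathtt{v}^\downarrow(\gG)$ because of the $\gG$-rise on $[\mathtt{s}^\downarrow(\gG),\mathtt{v}^\downarrow(\gG)]$, after which it follows the increments of $2S$ up to time $0$. Your write-up is somewhat more careful than the paper's (the monotonicity of $n\mapsto\ls_n-2S_n$ and the explicit exclusion of bottom clippings make rigorous the paper's terse ``using the definition of $\mathtt{v}^\downarrow(\gG)$ we see that $\ls_{\mathtt{v}^\downarrow(\gG)}=\gG$ too''), but the underlying argument is the same.
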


\medskip

Proposition~\ref{th:explicit-hat} is crucial to the arguments in Section~\ref{sec:comp}. The relevance of the next result needs no comment:

\medskip

\begin{proposition}
\label{th:lr-Fisher}
We have for all $n\in\bbZ$ that
\begin{equation}
\label{eq:lr-Fisher}
s^{(F)}_n\, =\, \sign\left( \ls_{n-1}+2h_n + \rs_{n+1} \right)\,.
\end{equation}
with the convention $\sign(0)=0$.
\end{proposition}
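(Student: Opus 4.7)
By translation covariance of both sides of \eqref{eq:lr-Fisher} under shifts of the disorder $(h_n)_{n \in \bbZ}$, it is enough to establish the formula at a single index, say $n=0$. The plan is to combine the explicit formulas of Proposition \ref{th:explicit-hat} for $\ls_0$ and $\rs_1$ with the reflected-walk dynamics $\ls_0 = \widehat{b}_\gG(\ls_{-1} + 2h_0)$, in order to rewrite $\sign(\ls_{-1} + 2 h_0 + \rs_1)$ as an explicit sign of a walk functional, and then to match it with $s_0^{(F)}$ as read off from the $\gG$-extrema of $S$ near $0$ via \eqref{eq:sF}.

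I would carry out the main case analysis along two independent dichotomies: whether, going backward from $0$, the walk $S$ first achieves a $\gG$-ascent or a $\gG$-descent (selecting between the two clauses of \eqref{eq:explicit-hat-l}), and the analogous forward dichotomy (selecting between the clauses of \eqref{eq:explicit-hat-r}). These four combinations correspond exactly to the four possible local configurations of consecutive $\gG$-extrema straddling $0$, once one observes that $\tvdown$ (resp.\ $\tvup$) is the location of the nearest $\gG$-maximum (resp.\ $\gG$-minimum) on or before $0$, and $\tudown$ (resp.\ $\tuup$) is that of the nearest $\gG$-maximum (resp.\ $\gG$-minimum) strictly after $0$. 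In each case Proposition \ref{th:explicit-hat} expresses $\ls_0$ and $\rs_1$ as affine functions of the walk values at these four extrema, so that summing yields $\ls_0 + \rs_1$ as the signed $S$-gap between two adjacent $\gG$-extrema of $S$. Since $|\rs_1| \leq \gG$ and $\widehat{b}_\gG$ is the identity on $[-\gG,\gG]$, the saturated regime $\ls_{-1} + 2h_0 \notin (-\gG, \gG)$ is handled at once (the sign being that of $\ls_0 = \pm \gG$), whereas outside saturation one simply has $\sign(\ls_{-1} + 2h_0 + \rs_1) = \sign(\ls_0 + \rs_1)$.

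The final step is to verify, in each of the four cases, that the sign of the walk increment so obtained agrees with the sign prescribed by the $\gG$-slope rule in \eqref{eq:sF}. I expect the main obstacle to be the lattice situation (when the law of $h_1$ has an atomic component): ties in the walk produce the distinction $\mathbf{u}_k \leq \mathbf{u}_k^+$ and force the ``first vs.\ last argmin'' conventions in Proposition \ref{th:explicit-hat}, and are precisely the configurations on which the convention $s^{(F)}=0$ on the left-hand side (see Remark \ref{rem:defFisher}) has to be cross-checked against the convention $\sign(0)=0$ on the right-hand side. In the non-lattice case all the relevant inequalities are a.s.\ strict and the argument is visibly cleaner; the lattice case only requires careful bookkeeping of strict versus non-strict inequalities in the argminimiser clauses, but introduces no new ideas beyond those of the four main cases.
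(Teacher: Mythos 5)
Your reduction $\ls_0 = \widehat{b}_\gG(\ls_{-1}+2h_0)$ and the plan to read the statement off Proposition~\ref{th:explicit-hat} through a four-case dichotomy is a reasonable starting point, but the two intermediate claims you treat as routine are not accurate, and correcting them is where the actual work lies. The locations $\tvdown(\gG),\tvup(\gG),\tudown(\gG),\tuup(\gG)$ are \emph{not} in general the nearest $\gG$-extrema of $S$ on either side of $0$: they are argmax/argmin locations over windows determined by the first $\gG$-descent/ascent from $0$ (and its reflection), and whether they are actual $\gG$-extrema depends on more of the trajectory. For instance, when $\mathtt{s}^\downarrow(\gG)>\mathtt{s}^\uparrow(\gG)$ and $\ttdown(\gG)<\ttup(\gG)$, both $\tvdown(\gG)$ and $\tudown(\gG)$ are candidate locations for one and the same $\gG$-maximum straddling $0$; only the one with the larger $S$-value (or both, if tied) is an actual $\gG$-extremum. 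Consequently the sum is not always ``the signed $S$-gap between two adjacent $\gG$-extrema'': in this both-max case $\ls_0+\rs_1=2\bigl(S_{\tudown(\gG)}-S_{\tvdown(\gG)}\bigr)$ is a gap between competing \emph{candidates} (its sign tells you which candidate wins), while in the two slope cases one gets, e.g.,
\begin{equation*}
\ls_0+\rs_1 \,=\, 2\gG - 2\bigl(S_{\tvdown(\gG)}-S_{\tuup(\gG)}\bigr) \qquad \text{when } \mathtt{s}^\downarrow(\gG)>\mathtt{s}^\uparrow(\gG),\ \ttup(\gG)<\ttdown(\gG),
\end{equation*}
which is not a gap but a comparison of a slope height with $\gG$. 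Making the match with \eqref{eq:sF} precise — pinning down which of $\tvdown,\tudown,\ldots$ are genuine $\gG$-extrema, connecting the resulting sign to the strict ``height $>\gG$'' clause, and handling ties — is the content of the proof, and your sketch treats it as a formality.

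The paper's proof avoids exactly these identification issues by parametrizing directly by the $\gG$-ascending or $\gG$-descending stretch of $S$ containing $n$ and deriving $\ls_m$, $\rs_m$, and $\ms_m$ in closed form on the whole stretch from the reflected-walk dynamics alone. Only genuine $\gG$-extrema (the endpoints of the stretch and the interior crossing times $\ttt_j,\ts_j$) enter, so the sign of $\ms_n$ and its comparison with \eqref{eq:sF} are immediate. Your route is plausible with enough bookkeeping but is not shorter. One more correction: the saturated regime should be stated as $|\ls_{-1}+2h_0|>\gG$ strictly, not $\ls_{-1}+2h_0\notin(-\gG,\gG)$. At $\ls_{-1}+2h_0=\gG$ with $\rs_1=-\gG$ (a configuration that does occur) your phrasing ``the sign being that of $\ls_0=\pm\gG$'' would give $+1$, whereas the right-hand side of \eqref{eq:lr-Fisher} is $0$ there.
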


\medskip

The third result is about the proximity of the reflected walks with the processes that arise from the transfer matrix analysis:
\medskip

\begin{lemma}
\label{th:modelsclose2}
    For every $\kappa>0$, there exists $C_\kappa>0$ such that
     with probability $1-O(\Gamma^{-\kappa})$ we have
    \begin{equation}
    \label{lapproxls}
    \left| l_0-\ls_0\right |\, \le \, \loglog \Gamma + C_\kappa\qquad \text{ and } \qquad \left| r_0-\rs_0\right|\, \le \, \loglog \Gamma + C_\kappa\,.
    \end{equation}
    \end{lemma}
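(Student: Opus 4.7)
The plan is to couple $(l_n)$ and $(\ls_n)$ by driving both processes with the same disorder sequence $(h_n)$ and the same initial condition at time $-N$, and pass to $N\to\infty$; by Remark~\ref{rem:time-reversal} it suffices to prove the bound for $l$. The comparison rests on two pointwise estimates. First, a direct calculation gives
\[
\left|b_\Gamma(y)-\widehat b_\Gamma(y)\right| \;\le\; \log\!\left(1+e^{-\left|\Gamma-|y|\right|}\right),
\]
so the two maps differ by at most $\log 2$ everywhere and by exponentially small amounts outside an $O(1)$-neighbourhood of $\pm\Gamma$. Second, $1-b_\Gamma'(y) \asymp (1+e^{\Gamma-|y|})^{-1}$, so $b_\Gamma$ provides genuine contraction only on the same $O(1)$-neighbourhood of the walls; in particular the pointwise contraction $b_\Gamma'(0)=\tanh(\Gamma/2)$ used in the proof of Lemma~\ref{th:from-infty} is by itself too weak to yield the bound we seek.

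Setting $\delta_n := l_n - \ls_n$ and applying the mean value theorem to $b_\Gamma$, we obtain the affine recursion $\delta_n = \alpha_n\,\delta_{n-1} + \beta_n$, with $\alpha_n := b_\Gamma'(\xi_n)$ for some $\xi_n$ between $l_{n-1}+2h_n$ and $\ls_{n-1}+2h_n$, and $\beta_n := b_\Gamma(\ls_{n-1}+2h_n) - \widehat b_\Gamma(\ls_{n-1}+2h_n)$. Iterating from $-N$ and letting $N\to\infty$ yields
\[
|\delta_0| \;\le\; \sum_{k\ge 0}|\beta_{-k}|\prod_{j=0}^{k-1}\alpha_{-j}.
\]
The structural observation that makes this sum tractable is that $|\beta_n|$ and $1-\alpha_n$ are concentrated on the same rare event: when $\ls_{n-1}+2h_n$ lies within $O(1)$ of a wall, $|\beta_n|$ can reach order $\log 2$ but simultaneously $\alpha_n\le 1/2+o(1)$, whereas outside such events $|\beta_n|$ is exponentially small and $\alpha_n = 1 - O(e^{-\Gamma})$. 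Organising the sum by successive boundary-approach events (counted backwards from $0$), the contribution of the $i$-th such event is bounded by $(\log 2)\,(1/2)^i$, so heuristically the sum is an $O(1)$ geometric series.

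The main obstacle is converting this heuristic into a probability-$1-O(\Gamma^{-\kappa})$ bound with the stated remainder $\loglog\Gamma + C_\kappa$. The naive per-step contraction is useless in the bulk, so geometric decay must be indexed by boundary-approach events rather than by time, and one must rule out pathological configurations in which the pattern of these events breaks the cancellation. The strategy is to truncate the series at a window of polynomial size $\Gamma^C$ in the past, bound the tail beyond the window by combining random-walk tail estimates from Appendix~\ref{sec:A} with the weak contraction of Lemma~\ref{th:from-infty}, and control the in-window part using Lemma~\ref{th:ctrl-inv}, which furnishes density $O(1/\Gamma)$ for the invariant measure of $l$ and $\ls$ on intervals of length down to $C_2\loglog\Gamma$ near the walls. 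The $\loglog\Gamma$ term is expected to come from exactly this scale: it absorbs both the logarithmic tails of the number of near-wall visits and the maximum of $|h_n|$ over the window under the sole assumption $h_1\in\bbL^2$, both of which are naturally controlled at the $\loglog\Gamma$ level. The analogous bound for $|r_0-\rs_0|$ then follows by applying the same argument to the time-reversed random walk as in Remark~\ref{rem:time-reversal}.
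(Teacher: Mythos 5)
Your proposal takes a genuinely different route from the paper and contains a gap I think is serious. The paper never compares $l$ and $\ls$ step by step. It uses Proposition~\ref{th:explicit-hat} to get the closed expression $\ls_0 = \gG - 2S_{\tvdown(\gG)}$ on $\{\tsdown(\gG) > \tsup(\gG)\}$, and then shows $l_0$ is within $\log(1+\widetilde{C}_\kappa\log\gG)$ of that \emph{same} quantity, by applying the one-sided iteration inequalities of Lemma~\ref{th:forNrandom} from $n_0=\tvdown(\gG)$ for the upper bound and from $n_0=\tsdown(\gG)$ for the lower bound; the only input about $l$ at those random times is the trivial $|l|\le\gG$ together with $S_{\tvdown(\gG)}-S_{\tsdown(\gG)}\ge\gG$. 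All the probabilistic work is then in the random-walk excursion estimates of Lemma~\ref{th:expomoments}, and the $\loglog\gG$ arises as the outer $\log$ applied to the $\widetilde{C}_\kappa\log\gG$ size of the error sum.

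Your affine recursion $\delta_n=\alpha_n\delta_{n-1}+\beta_n$ has a self-consistency problem you do not address. The contraction $\alpha_n=b_\gG'(\xi_n)$ is evaluated at a mean-value point $\xi_n$ between $l_{n-1}+2h_n$ and $\ls_{n-1}+2h_n$. If $\ls_{n-1}+2h_n$ is within $O(1)$ of a wall but $|\delta_{n-1}|$ has already grown, $\xi_n$ may lie in the bulk where $\alpha_n=1-O(e^{-\gG})$; you then pick up the full $|\beta_n|\approx\log 2$ contribution with no compensating contraction. The geometric decay indexed by boundary-approach events is therefore available only on the event that $|\delta|$ has stayed $O(1)$, which is precisely what the lemma asserts — without an a priori control on $\delta$ the per-event decay is unverified, and the sum need not be geometric at all. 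Separately, your proposed source of $\loglog\gG$ is off: under $h_1\in\bbL^2$, $\max_{n\le\gG^C}|h_n|$ is polynomial in $\gG$ at the probability level you need, not $\loglog\gG$; in the paper's proof the $\loglog\gG$ has nothing to do with the maximum of the $h_n$ and comes only from the outer $\log$ mentioned above. Finally, note that \eqref{eq:ctrl-inv2} of Lemma~\ref{th:ctrl-inv} (the density bound for $r_1$, equivalently for $l_{-1}$) is itself deduced from Lemma~\ref{th:modelsclose2} via \eqref{eq:pre-obs-inv}, so invoking it here would be circular; only the $\ls$/$\rs$ bound \eqref{eq:ctrl-inv} is at your disposal, and in fact the paper's proof of this lemma does not use Lemma~\ref{th:ctrl-inv} at all.
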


\medskip

By translating, Lemma~\ref{th:modelsclose2} holds also if $0$ is replaced by any $n\in \bbZ$ (the event with probability $1-O(\Gamma^{-\kappa})$ on which the result holds depends on $n$).
\smallskip

The proofs of Proposition~\ref{th:explicit-hat} and of Proposition~\ref{th:lr-Fisher} are in Appendix~\ref{sec:proofs-constrained}.
The proof of Lemma~\ref{th:modelsclose2} is in 
Section~\ref{sec:comp-nu}.

\subsection{Proof of Theorem~\ref{th:main}}
\label{sec:mainproof}

Recall \eqref{eq:sF+} and \eqref{eq:sF} for the definitions of $\left(s^{(F, +)}_n\right)_{n\in \{1,2,  \ldots\}}$ and   $\left(s^{(F)}_n\right)_{n\in \bbZ}$.
Let us set 
\begin{equation}
\label{eq:DgGdef}
D_\gG \, :=\, \bbE\left[\bP_{J, h}\left(\gs_0\neq s^{(F)}_0\right)\right] = \bbE \left[
\frac {1}{1+ \exp\left(s^{(F)}_0\left(l_{-1} +2h_0+ r_{1}\right)\right)} 
\right] + \frac12 \bbP \left(s^{(F)}_0 =0 \right) \, .
\end{equation}

Recall \eqref{eq:D_N} for $D_N(\cdot, \cdot)$. We remark that, since $s^{(F, +)}$ and $s^{(F)}$ differ only at finitely many sites (see \eqref{eq:stability-+}), \eqref{eq:main1} of 
Theorem~\ref{th:main} is equivalent to showing that
for almost every realization of $(h_n)$ 
\begin{equation}
\label{eq:main1-Theta-inv}
\lim_{N \to \infty} D_N \left(\gs,s^{(F)}\right)
\, =\, D_\gG\, ,
\end{equation} 
in 
$ \bP_{N, J, h}^{ab}$ probability. But \eqref{eq:main1-Theta-inv}
is a direct consequence of Proposition \ref{th:ergstatK}, so \eqref{eq:main1} is established
with explicit expression for $D_\gG$.

\smallskip

We are therefore left with showing \eqref{eq:main2}, that is
$D_\gG=O( \loglog \gG/ \gG)$. For this we first remark by \eqref{eq:generalergstat} that 
for almost every realization of $(h_n)_{n\in\bbZ}$
\begin{equation}
D_N\left(s^{(m)}, s^{(F)}\right)\underset{N\to \infty}{\longrightarrow} \bbP\left(s_0^{(m)}\neq s_0^{(F)}\right)\, .
\end{equation}
So, in view of Proposition~\ref{th:discrepancy0} and observing that $D_N(\sigma, s^{(F)})\leq D_N(\sigma, s^{(m)})+D_N(s^{(m)}, s^{(F)})$, it suffices to show that
\begin{equation}
\label{eq:PsmneqsF}
\bbP\left(s_0^{(m)}\neq s_0^{(F)}\right)\,=\,O\left(\frac{\loglog \gG}{\gG}\right)\, .
\end{equation}

To establish \eqref{eq:PsmneqsF} we use Lemma~\ref{th:modelsclose2} that yields
that with probability $1-O(1/\Gamma)$ and $\gG$ sufficiently large
    \begin{equation}
    \left| l_{-1}-\ls_{-1}\right |\, \le \,  2 \loglog \Gamma \qquad \text{ and } \qquad \left| r_1-\rs_1\right|\, \le \, 2 \loglog\Gamma\,.
    \end{equation}
which implies in particular that
\begin{equation}\label{lapproxls2}
\left|\left(l_{-1}+2h_0+r_1\right)-\left(\ls_{-1}+2h_0+\rs_1\right)\right|\,\le\, 4\loglog\gG\, .
\end{equation}
Recall now that $s_0^{(m)}=+1$ if $l_{-1}+2h_0+r_1\ge 0$ and $-1$ otherwise (see \eqref{eq:defsm}) and that $s_0^{(F)}=\sign(\ls_{-1}+2h_0+\rs_1)$, with the convention $\sign(0)=0$ (see Proposition \ref{th:lr-Fisher}), so on the event 
on which \eqref{lapproxls2} holds, $ s^{(m)}_0 \neq s^{(F)}_0$ implies $|l_{-1}+2h_0+r_1|\le 4\loglog\gG$, i.e., $l_{-1}$ belongs to $ [-2h_0-r_1-4\loglog\gG, -2h_0-r_1+4\loglog\gG]$. Furthermore, \eqref{eq:fctrl-inv} and the independence of $l_{-1}$, $h_0$ and $r_1$ imply that \begin{equation}
\bbP\left(l_{-1}\in \left[-2h_0-r_1-4\loglog\gG, -2h_0-r_1+4\loglog\gG\right]\right)\,=\,O \left(\frac {\loglog \gG}{\gG}\right)\, ,
\end{equation}
which yields \eqref{eq:PsmneqsF} and therefore \eqref{eq:main2}. The proof of Theorem~\ref{th:main} is therefore complete.
\qed

\section{Comparison estimates and the control of the invariant probability }
\label{sec:comp-nu}

\subsection{On the proximity of random walks with different reflection mechanisms}
\label{sec:comp}
We consider the Markov chain $(l_n)$: $(r_{-n+1})$ is the same process. We recall that it satisfies 
\begin{equation}
l_{n+1}\, =\, l_n+2h_{n+1}+ \log \left( \frac{1+ e^{-\gG -l_n-2h_{n+1}}}{1+ e^{-\gG +l_n+2h_{n+1}}} \right)\,=\, b_\gG\circ \theta_{h_{n+1}}(l_n).
\end{equation}
The main result is about the stationary sequence $(l_n)_{n \in \bbZ}$, but this is clearly inessential for the first result that applies  to 
$(l_n)_{n=n_0, n_0+1, \dots}$ with arbitrary law of $l_{n_0}$. 

Recall that 
  $(S_k)_{k\in \bbZ}$  is the random walk with $S_0=0$ and $S_{m}-S_n= \sum_{j=n+1}^m h_j$ for every $n< m$.  
\medskip

 \begin{lemma} 
 \label{th:forNrandom} 
 We have for all time $n_0$ and all $n\ge n_0$,  
 \begin{align}
 l_n \, &\le\, \log\left(e^{l_{n_0}+ 2 (S_n-S_{n_0})} + \gep \sum_{k={n_0}+1}^{n} e^{-2(S_k-S_{n})} \right),  \label{eq:ln<}  \\
  l_n \,&\ge\, -\log \left(e^{-(l_{n_0}+ 2 (S_n-S_{n_0}))}+ \gep \sum_{k={n_0}+1}^{n} e^{2(S_k-S_{n})} \right), \label{eq:ln>}
 \end{align}
 with the convention $\sum_\emptyset:=0$.
 \end{lemma}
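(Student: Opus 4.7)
The plan is to exponentiate the recursion and observe that it becomes affine up to a harmless denominator factor, after which both bounds follow from a one-line induction. Writing $\gep = e^{-\gG}$ (as introduced earlier in the paper) and setting $y_n := l_n + 2 h_{n+1} = l_n + 2(S_{n+1}-S_n)$, the recursion $l_{n+1} = b_\gG \circ \theta_{h_{n+1}}(l_n)$ reads, after exponentiating,
\begin{equation}
\label{eq:expform}
e^{l_{n+1}} \, = \, \frac{e^{y_n} + \gep}{1 + \gep\, e^{y_n}}\, .
\end{equation}
This identity is the only computation I would actually carry out; after that the proof is mechanical.

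For the upper bound \eqref{eq:ln<}, I drop the denominator in \eqref{eq:expform} (which is $\ge 1$) to get $e^{l_{n+1}} \le e^{l_n + 2(S_{n+1}-S_n)} + \gep$, then divide by $e^{2 S_{n+1}}$, which transforms the inequality into
\begin{equation}
a_{n+1}\, \le\, a_n + \gep\, e^{-2 S_{n+1}}\, , \qquad a_n\, :=\, e^{l_n - 2 S_n}\, .
\end{equation}
A telescoping induction starting from $n=n_0$ gives $a_n \le a_{n_0} + \gep \sum_{k=n_0+1}^n e^{-2 S_k}$; multiplying back by $e^{2 S_n}$ and taking logarithms yields \eqref{eq:ln<}.

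For the lower bound \eqref{eq:ln>}, I take the reciprocal of \eqref{eq:expform} to obtain the dual identity
\begin{equation}
e^{-l_{n+1}}\, =\, \frac{e^{-y_n} + \gep}{1 + \gep\, e^{-y_n}}\, \le\, e^{-y_n} + \gep\, ,
\end{equation}
and then repeat the previous argument with $b_n := e^{-l_n + 2 S_n}$: one gets $b_{n+1} \le b_n + \gep\, e^{2 S_{n+1}}$, iterate from $n_0$, multiply by $e^{-2 S_n}$, and take $-\log$ to recover \eqref{eq:ln>}.

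There is no real obstacle here: the only non-routine point is noticing that the subtle, non-linear reflection produced by $b_\gG$ becomes \emph{linear} (with slope $1$ and additive perturbation $\gep$) once one exponentiates and throws away the $\ge 1$ denominator. The random-walk factors $e^{\pm 2(S_k - S_n)}$ appearing in \eqref{eq:ln<}--\eqref{eq:ln>} are simply the product of the one-step drifts that accumulate in the induction, and the conventions $\sum_\emptyset = 0$ and $\prod_\emptyset = 1$ handle the base case $n = n_0$ directly.
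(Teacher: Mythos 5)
Your proof is correct and follows essentially the same route as the paper: exponentiate the recursion, drop the denominator $1+\gep\,e^{y_n}\ge 1$ to obtain $e^{l_{n+1}}\le e^{l_n}e^{2h_{n+1}}+\gep$, and iterate. The change of variables $a_n=e^{l_n-2S_n}$ is cosmetic (the paper iterates the affine bound directly), and your reciprocal identity $e^{-l_{n+1}}=(e^{-y_n}+\gep)/(1+\gep\,e^{-y_n})$ is an explicit rendering of the symmetry the paper invokes via Remark~\ref{rem:time-reversal} ($-l$ obeys the same recursion driven by $-S$).
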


\medskip

\begin{proof} 
 Recall that $\gep= e^{-\gG}$. We observe that for every $j$,
\begin{equation}
\exp \left(l_{j+1}\right)
\, =\,  \exp \left(  l_{j} + 2h_{j+1}\right)   \frac{1+ e^{-\gG -l_j-2h_{j+1}}}{1+ e^{-\gG +l_j+2h_{j+1}}} 
\, \le \, \exp \left(  l_{j}+ 2h_{j+1}\right)+ \gep\, .
\end{equation}
By iterating this bound starting from $j=n_0$ we obtain that for every $n=n_0, n_0+1, \ldots$
\begin{equation}
\begin{split}
   \exp \left(  l_{n}\right)\, &\le \,   \exp \left(  l_{n_0}\right)\prod_{k={n_0}+1}^{n} \exp(2h_k) 
   +\gep \sum_{k={n_0}+1}^{n} \prod_{i=k+1}^{n+1} \exp(2h_i)  
   \\ &= \,  \exp \left(  l_{n_0}\right) e^{2(S_{n}-S_{n_0})}+\gep \sum_{k={n_0}+1}^{n} e^{2(S_{n}-S_k)}\, ,
   \end{split}
   \end{equation}
which, by taking the logarithm, becomes \eqref{eq:ln<}. 
\smallskip

To obtain \eqref{eq:ln>} it suffices to remark that $(-l_n)_{n\in\bbZ}$
 satisfies the same recurrence relation as $(l_n)_{n\in\bbZ}$, with the only change that $S$ is replaced by $-S$ (see Remark \ref{rem:time-reversal}). Therefore \eqref{eq:ln>} follows from \eqref{eq:ln<}.
\end{proof}

\medskip
We are now ready to prove  Lemma~\ref{th:modelsclose2}.

\medskip

\begin{proof}[Proof of Lemma~\ref{th:modelsclose2}]
Choose $\kappa>0$. We give a proof of the bound on $ l_0-\ls_0$ in \eqref{lapproxls}: the result on $r_0-\rs_0$ follows by changing $S$ in $-\Srev$, and then applying a shift by 1 (see Remark \ref{rem:time-reversal}). 
For the bound on $ l_0-\ls_0$
we focus on the event that $\tsdown(\gG)>\tsup(\gG)$ (see \eqref{def-supdown} for the definitions of $\tsdown(\gG)$ and $\tsup(\gG)$). The result in the case $\tsup(\gG)>\tsdown(\gG)$ follows by changing $S$ to $-S$. 

\smallskip 

Recall from Proposition \ref{th:explicit-hat} that on the event $\{\tsdown(\Gamma)>\tsup(\Gamma)\}$, we have $\ls_0=\gG-2 S_{\tvdown(\gG)}$.
Our program is resumed in two (independent) bounds: on the event $\{\tsdown(\Gamma)>\tsup(\Gamma)\}$, excluding an event of probability $O(\gG^{-\kappa})$ and by suitably choosing a constant  $\widetilde{C}_\kappa<\infty$  \smallskip 
  \medskip
  \begin{itemize} [leftmargin=9pt]
  \item \emph{Upper bound}: exploiting that $l_{\tvdown(\gG)}\le \gG$ and that from time $\tvdown(\gG)$ to time 0 the process $l_\cdot $ evolves approximately according to $2 S_\cdot$ we aim at showing that
  \begin{equation}\label{eq:upp-lv}
  l_0\,\le\, \gG-2 S_{\tvdown(\gG)}+\log\left(1+\widetilde{C}_\kappa\log \gG\right)\,;
  \end{equation} 
  \item \emph{Lower bound}: exploiting that there is a rise in $S$ of size not smaller than $\gG$ between times $\tsdown(\Gamma)$ and $\tvdown(\gG)$ and that afterwards, up to time 0, the process $l_\cdot $ evolves approximately according to $2 S_\cdot$ we aim at showing that
  \begin{equation}\label{eq:low-lv} l_0 \, \ge \, \gG-2 S_{\tvdown(\gG)}-\log\left(1+\widetilde{C}_\kappa\log \gG\right)\, . 
  \end{equation}
  \end{itemize}
  \medskip

\medskip 
We proceed now with the proofs of the two bounds, which rely on Lemma \ref{th:forNrandom} and a control of the error terms provided by Lemma \ref{th:expomoments} in the appendix. The proofs are independent.
  
\subsubsection{Proof of the upper bound}
    By applying Lemma~\ref{th:forNrandom} with $n_0=\tvdown(\Gamma)$ and $n=0$,
   we obtain that
  \begin{equation}
  \begin{aligned}
  l_0\, & \le\, \log\left(e^{l_{\tvdown(\Gamma)}+2(S_0-S_{\tvdown(\Gamma)})} +\gep \sum_{n=\tvdown(\Gamma)+1}^0 e^{-2\left(S_n-S_0\right)}\right)\, \\
  & = \, \gG-2S_{\tvdown(\Gamma)}+ \log\left(e^{l_{\tvdown(\Gamma)}-\gG} + \sum_{n=\tvdown(\Gamma)+1}^0 e^{2\left(S_{\tvdown(\gG)}-S_n-\gG\right)}\right)\, .
  \end{aligned}
  \end{equation}
Since $l_{v^\downarrow(\gG)}  \in[-\gG, \gG]$, the first term inside the logarithm is bounded by 1.
It follows from Lemma \ref{th:expomoments}(ii) (applied to $\Srev$), that with probability $1-O(\Gamma^{-\kappa})$, on the event $\{\tsdown(\Gamma)>\tsup(\Gamma)\}$, we have 
\begin{equation}
 \sum_{n=\tvdown(\Gamma)+1}^0 e^{2\left(S_{\tvdown(\gG)}-S_n-\gG\right)}\,\le\, \widetilde{C}_\kappa\log \gG\, .
\end{equation}
The upper bound \eqref{eq:upp-lv} follows.

\subsubsection{Proof of the lower bound}
For the lower bound, we can actually work without the assumption that $\sd(\Gamma)>\supa(\Gamma)$. The bound will of course remain true on the event $\{\sd(\Gamma)>\supa(\Gamma)\}$. 
By applying Lemma~\ref{th:forNrandom} with $n_0=\sd(\Gamma)$ and $n=0$, we have that
  \begin{equation}\label{eq:llow} 
  \begin{aligned}
   l_0 & \, \ge \,  -\log\left(e^{-\left(l_{\tsdown(\gG)}+2(S_0-S_{\tsdown(\Gamma)})\right)}+\gep \sum_{n=\tsdown(\Gamma)+1}^{0} e^{2\left(S_n-S_0\right)}\right)\\   
   & \, = \, \gG-2S_{\tvdown(\Gamma)}-\log\left(e^{\gG-l_{\sd(\Gamma)}-2(S_{\tvdown(\Gamma)}-S_{\tsdown(\Gamma)})}+\sum_{n=\tsdown(\Gamma)+1}^{0} e^{2\left(S_n-S_{\tvdown(\gG)}\right)}\right)\, ,
   \end{aligned}
  \end{equation}
  Since $l_{\tsdown(\Gamma)}\in[-\gG, \gG]$ and $S_{\tvdown(\Gamma)}-S_{\tsdown(\Gamma)}\ge\Gamma$, the first term inside the logarithm is bounded by 1.
It follows from Lemma \ref{th:expomoments} (i) (applied to $\Srev$) that with probability $1-O(\Gamma^{-\kappa})$ we have
\begin{equation}
 \sum_{n=\tsdown(\Gamma)+1}^{0} e^{2\left(S_n-S_{\tvdown(\gG)}\right)}\,\le\,  \widetilde{C}_\kappa\log \gG\, .
\end{equation}
This completes the proof of the lower bound \eqref{eq:low-lv} and therefore also the proof of Lemma~\ref{th:modelsclose2} is complete.
\end{proof}

\subsection{An estimate on the invariant probability}
\label{sec:nu}

To establish \eqref{eq:fctrl-inv} we use Lemma~\ref{th:modelsclose2} to see that for every interval $I=[a, b] \subset [-\Gamma, \Gamma]$
 \begin{equation} 
 \label{eq:pre-obs-inv}
 \p\left(r_1 \in [a, b]\right) \le O(\Gamma^{-1}) + \p\left( \widehat r_1 \in [a- 2 \log\log \gG, b+ 2\log \log \gG\right)\, ,   
\end{equation}
so we reduce the issue to estimating $\p\left(\rs_1 \in [a, b]\right)$ and we recall that for $\rs_1$ we have the expression  \eqref{eq:explicit-hat-r}. We recall also that the processes $\ls$ and $\rs$ are stationary sequences with the same marginal law. A simulation of such an invariant law is in Fig.~\ref{fig:inv-p}.

\medskip

\begin{lemma} 
\label{th:ctrl-inv}
  There exists  $c_1>0$ and $\gG_0>0$ such that for every $I =[a, b]\subset [-\Gamma, \Gamma]$ and  $\Gamma\ge \Gamma_0$   \begin{equation}
  \label{eq:ctrl-inv}
    \p\left(\widehat r_1\in I\right) \le c_1 \frac{1+b-a}{\Gamma}\, ,
 \end{equation}  
 and
\begin{equation}
  \label{eq:ctrl-inv2}
    \p\left( r_1\in I\right) \le c_1 \frac{5\loglog \gG+b-a}{\Gamma}\, ,
 \end{equation}   
\end{lemma}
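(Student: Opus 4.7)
The second bound \eqref{eq:ctrl-inv2} is a direct consequence of \eqref{eq:ctrl-inv}. Indeed, from Lemma~\ref{th:modelsclose2} (applied with $\kappa=1$) and the preliminary observation \eqref{eq:pre-obs-inv},
\[
\bbP(r_1\in [a,b])\leq O(1/\gG)+\bbP\bigl(\widehat r_1\in [a-2\loglog\gG,\, b+2\loglog\gG]\bigr),
\]
and applying \eqref{eq:ctrl-inv} to the enlarged interval of length $(b-a)+4\loglog\gG$ produces the desired bound (after absorbing the $O(1/\gG)$ term into the constant).

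For \eqref{eq:ctrl-inv} itself, I use the explicit formula of Proposition~\ref{th:explicit-hat} to write
\[
\bbP(\widehat r_1\in [a,b])\le \bbP\!\left(S_{\tudown(\gG)}\in \tfrac12[a+\gG,b+\gG],\, \ttdown(\gG)<\ttup(\gG)\right)+ \bbP\!\left(S_{\tuup(\gG)}\in \tfrac12[a-\gG,b-\gG],\, \ttup(\gG)<\ttdown(\gG)\right).
\]
The two summands are handled by identical arguments via $S\leftrightarrow -S$ at the level of random walk estimates (this swap does not require any distributional symmetry on $h_1$, only the centering and finite variance). It thus suffices to establish that for every $[c,d]\subset [0,\gG]$,
\[
\bbP\bigl(S_{\tudown(\gG)}\in [c,d],\, \ttdown(\gG)<\ttup(\gG)\bigr)\le C\,\frac{1+d-c}{\gG}.
\]

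To prove this, set $M_n:=\max_{0\le k\le n} S_k$, so $S_{\tudown(\gG)}=M_{\ttdown(\gG)}$, and $\tau_x:=\inf\{n\ge 0:S_n\ge x\}$. The event $\{M_{\ttdown(\gG)}\in [c,d],\,\ttdown(\gG)<\ttup(\gG)\}$ forces both $\tau_c\le\ttdown(\gG)$ and $\tau_{(d,\infty)}>\ttdown(\gG)$. Apply the strong Markov property at $\tau_c$: on $\{\tau_c\le\ttdown(\gG)\}$ the walk restarts from $S_{\tau_c}\ge c$ with running maximum equal to $S_{\tau_c}$, and the restarted walk must $\gG$-drop before exceeding $d$. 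By translation invariance, this conditional probability is at most $p(d-c)$, where
\[
p(u)\,:=\,\bbP_0\bigl(\ttdown(\gG)<\tau_{(u,\infty)}\bigr),\qquad u\in [0,\gG].
\]
On $\{\ttdown(\gG)<\tau_{(u,\infty)}\}$ the running maximum stays $\le u$, so at time $\ttdown(\gG)$ we have $S_{\ttdown(\gG)}\le u-\gG$; hence
\[
\{\ttdown(\gG)<\tau_{(u,\infty)}\}\subset \bigl\{\tau_{(-\infty,\,u-\gG]}<\tau_{(u,\infty)}\bigr\},
\]
which is a classical two-barrier exit problem for the centered walk $S$. Optional stopping for the martingale $(S_n)$ at the a.s.\ finite stopping time $\sigma=\tau_{(-\infty,u-\gG]}\wedge\tau_{(u,\infty)}$, combined with a uniform (in $u\in [0,\gG]$) control of the expected overshoot at $\tau_{(u,\infty)}$, gives $p(u)\le C(1+u)/\gG$. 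Plugging this into the strong Markov decomposition yields the required estimate.

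The \emph{main obstacle} is the walk estimate $p(u)\le C(1+u)/\gG$ uniformly in $u\in [0,\gG]$ under only the hypothesis $\bbE[h_1^2]<\infty$: for general centered walks with finite variance, the expected overshoot at a positive barrier is not a priori bounded (ladder heights can be heavy-tailed), so a naive optional stopping on $(S_n)$ does not immediately yield the sharp $(1+u)/\gG$ bound and a ladder-height or second-moment refinement is required. This is presumably the content of the random walk estimates gathered in Appendix~\ref{sec:A}.
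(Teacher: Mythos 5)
Your handling of \eqref{eq:ctrl-inv2} is exactly the paper's: combine Lemma~\ref{th:modelsclose2} with \eqref{eq:pre-obs-inv} and then invoke \eqref{eq:ctrl-inv} on the enlarged interval. And your reduction of \eqref{eq:ctrl-inv} to the two one-sided events via Proposition~\ref{th:explicit-hat} and the $S\leftrightarrow -S$ swap is fine. The strong Markov step at $\tau_c$ that reduces the problem to bounding $p(u):=\bbP_0(\ttdown(\gG)<\tau_{(u,\infty)})$ by $C(1+u)/\gG$ is a legitimate alternative to the paper's decomposition, which instead sums directly over the index $k$ of ascending ladder epochs and factors $\p(\widehat r_1\in[a,b],\ttdown<\ttup)\le U\big([(a+\gG)/2,(b+\gG)/2]\big)\,\p(\min_{0\le j<\varrho_1}S_j\le -\gG)$.

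The genuine gap is precisely where you flag it, and it is not remedied by Appendix~\ref{sec:A}. Optional stopping for $(S_n)$ at $\sigma=\tau_{(-\infty,u-\gG]}\wedge\tau_{(u,\infty)}$ gives $q(\gG-u)\le\e[S_\sigma;\sigma=\tau_{(u,\infty)}]$, and to turn the right-hand side into $u+O(1)$ you need $\sup_{u\in[0,\gG]}\e[(S_{\tau_{(u,\infty)}}-u)^+]<\infty$. By Lorden's inequality this uniform overshoot bound holds iff the first strict ascending ladder height $H=S_{\varrho_1}$ has $\e[H^2]<\infty$, which for a centered walk requires a third moment on $h_1^+$, not just $\e[h_1^2]<\infty$. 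When $\e[H^2]=\infty$ the mean overshoot diverges as $u\to\infty$, and for $u$ of order $\gG$ the bound you need fails, so the martingale route does not close under the paper's hypotheses. Appendix~\ref{sec:A} contains the gambler's ruin estimate \eqref{exitproba1} and Lemma~\ref{l:rw1}, \ref{th:expomoments}, but no uniform overshoot control, so nothing there rescues the argument.

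The fix is to bound $p(u)$ the same way the paper bounds the target directly: decompose over the ladder index and factor by strong Markov,
\begin{equation*}
p(u)\le\sum_{k\ge 0}\p\Big(S_{\varrho_k}\le u,\;\min_{\varrho_k\le j<\varrho_{k+1}}S_j\le S_{\varrho_k}-\gG\Big)=U\big([0,u]\big)\,\p\Big(\min_{0\le j<\varrho_1}S_j\le -\gG\Big)\le c_2(1+u)\,\frac{c_4}{\gG}\, ,
\end{equation*}
which uses only $\e[H]<\infty$ (the elementary renewal theorem) and \eqref{exitproba1}, so it matches the paper's $\bbL^2$ hypothesis. With that substitution your proposal becomes a correct, slightly different organization of the same ingredients (renewal function for ladder heights plus gambler's ruin), with the extra intermediate reduction to $p(u)$.
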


\medskip 

\begin{proof}
 Let us introduce the (strictly) ascending ladder epochs: $\varrho_0:=0$ and   \begin{equation}     \varrho_k:= \inf\{n> \varrho_{k-1}: S_n > S_{\varrho_{k-1}}\}, \qquad k=1, 2, \ldots
  \label{def-rhok}\end{equation}
     
\noindent By definition of $\tudown(\gG)$ in \eqref{eq:defudown},  we have that  $\tudown(\gG)=\varrho_ {\mathcal K}$ with  \begin{equation}   
   {\mathcal K} \,:= \,  \inf\{k\ge 0: \max_{\varrho_k \le n < \varrho_{k+1}} (S_{\varrho_k}-S_n) \ge \Gamma\}\,. \label{j*} 
    \end{equation} 
 
\noindent Consider the case $\{\ttdown(\Gamma)<\ttup(\Gamma)\}$.  By  \eqref{eq:explicit-hat-r}, $\widehat r_1= -\Gamma+ 2 S_{\varrho_{\mathcal K}}$. 
It follows that 
\begin{equation} 
\begin{split}
\p\left(\widehat r_1\in [a, b], \, \ttdown(\Gamma)<\ttup(\Gamma)\right) \,
&\le\, 
\p\left(-\gG+ 2 S_{\varrho_{\mathcal K}} \in [a, b]\right) \,\\
&\le\, 
\sum_{k=0}^\infty \p\left( -\Gamma+ 2 S_{\varrho_k} \in [a, b], \min_{\varrho_k \le j < \varrho_{k+1}} S_j \le S_{\varrho_k} - \Gamma\right)
\\ &=\, 
 \sum_{k=0}^\infty \p\left( -\Gamma+ 2 S_{\varrho_k} \in [a, b]\right) \, \p\left( \min_{ \varrho_{k}\le   j < \varrho_{k+1}} S_j \le S_{\varrho_k}  - \Gamma\right)
 \\
&=:\,  U\left( \left[(a+\Gamma)/2, (b+\Gamma)2\right]\right) \, \p\left( \min_{0\le  j < \varrho_1} S_j \le - \Gamma\right),
\end{split}
 \end{equation}
 with standard definition of the renewal function $U$ associated with the increasing random walk, or renewal process,  $S_{\varrho_k}$. By  \cite[Ch.~IV, p.~199]{cf:Spitzer} we know that $\e[S_{\varrho_1}]<\infty$ and since $U([x, y]) \le U([0, y-x])$ (see for example \cite[Th.~2.4, Ch. V]{cf:Asm}), the elementary version
of the Renewal Theorem, i.e.  $U([0,x])\sim x/\e[S_{\varrho_1}]$ for $x \to \infty$ (a consequence of  the Law of Large Numbers),  implies that there exists  $c_{2}>0$ such that for every $0\le x <y$ 
\begin{equation}  U([x, y])\,  \le\,  c_{2} (1+ y-x)\,.  
\end{equation} 
 Since by  \eqref{exitproba1} we have  $\p( \min_{0\le j < \varrho_1} S_j \le - \Gamma) \le {c_4}/{\Gamma}$, we obtain
 \eqref{eq:ctrl-inv}. And, by \eqref{eq:pre-obs-inv}, from \eqref{eq:ctrl-inv}  we obtain \eqref{eq:ctrl-inv2}.
 The proof of 
 Lemma~\ref{th:ctrl-inv} is therefore complete. 
\end{proof}

\begin{figure}[h]
\begin{center}
\includegraphics[width=15 cm]{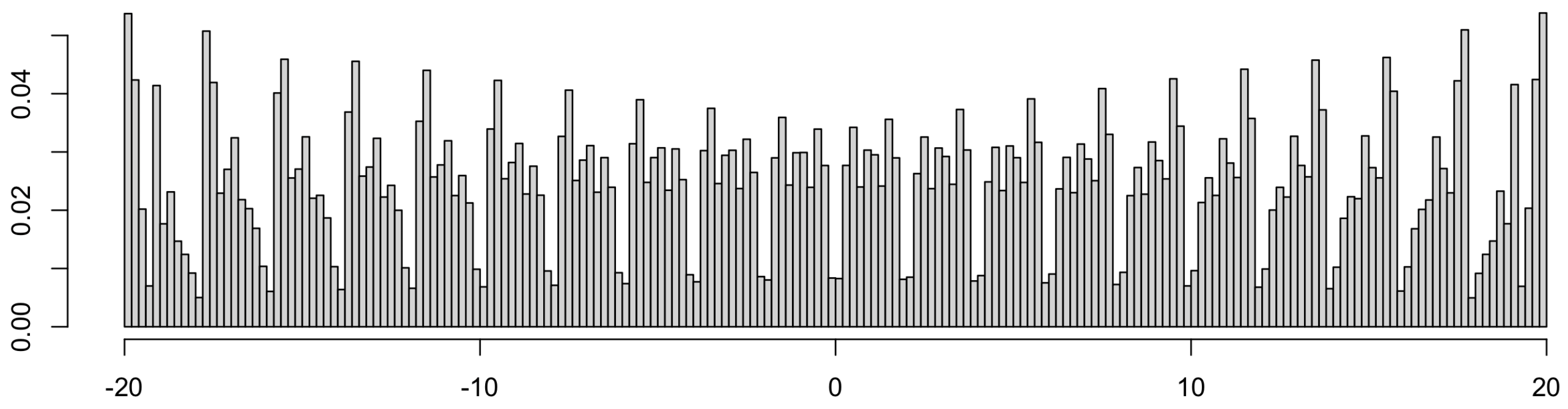}
\includegraphics[width=15 cm]{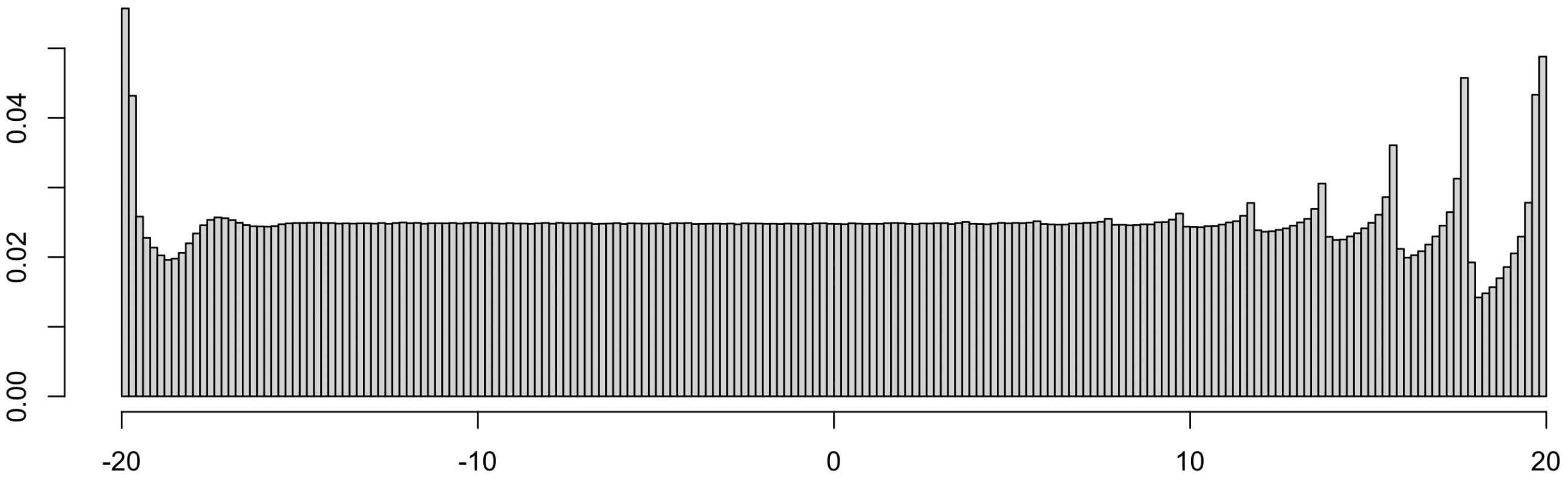}
\end{center}
\caption{\label{fig:inv-p} A simulation of the $(l_n)_{n=0,\ldots,N}$, with $l_0=0$ and $N=  5\cdot 10^7$, $\gG=20$. In the first case 
the law of $h_1$ is $(\gd_{-2}+\gd_{+2})/2$ and in the second case the law is $(\gd_{-2}+ \gamma)/2$  where $\gamma$ is the law of a Gaussian 
of mean $2$ and variance $1/ \sqrt{2}$. {These two histograms are expected  to give an idea of how the invariant probability looks like:
in fact, the simulation is very stable with respect to increasing $N$ and changing the randomness. Note that this empirical observation is compatible in the second case with  the invariant probability being very close to (a multiple) of the Lebesgue measure away from the boundaries (this result is proven in \cite{cf:GG22} under stronger regularity properties of the law of $h_1$). However, in the first case a periodic phenomenon seems to be present and    proximity to the Lebesgue measure appears to be plausible only if averages over large boxes are taken}.
} 
\end{figure}

\section{Proof of Proposition \ref{th:ergstatK}}
\label{sec:proofergstat}

We are going to prove Proposition \ref{th:ergstatK}. So we are given a mesurable function $K:\bbR^\bbZ\to \{-1, 0, +1\}$, and we set $s_n^{(K)}=K(\Theta^{\circ n} h)$.
Our aim is to show the convergence of 
\begin{equation}
D_N\left(\sigma, s^{(K)}\right)=\frac1N\ \left \vert \left\{ n=1,2 \ldots, N:\, \gs_n \neq s_n^{(K)}\right\} \right \vert=\frac1N \sum_{n=1}^N \ind_{\gs_n \neq s_n^{(K)}}\, , 
\end{equation}
which we will write for short $D_N$, towards 
\begin{equation}
\bbE\left[\frac1{1+\exp\left(s^{(K)}_0 m_0\right)}\right]+\frac12 \bbP\left(s_0^{(K)}=0\right)\,,
\end{equation}
in $\bP^{ab}_{N, J, h}$-probability, as well as in $\bP_{J, h}$-probability.

Our strategy is to get rid of the boundary effects to turn to the infinite volume model, and then use ergodicity. Paramount to our proof is the uniform contractive estimate \eqref{eq:17JLct} in the proof of Lemma \ref{th:from-infty}. We reproduce it here:
for every $k\ge 0$ and every $h_1, \dots, h_k\in\R$
\begin{equation}
\label{eq:lipshitzbound}
\sup_{s,t\in [-\gG,\gG]}
\left|f_{h_k}\circ f_{h_{k-1}} \circ \dots \circ f_{h_1}(t)-f_{h_k}\circ f_{h_{k-1}} \circ \dots \circ f_{h_1}(s)\right|  \, \le \, 2\gG  \exp(-k\gep) \, .
\end{equation}
Of course this observation  directly applies to control the  difference of  two $l$ (or $r$) processes  started at different initial values
and/or different initial times. 

\medskip

We first have a look at the expectation of $D_N$ under $\bP_{N, J, h}^{ab}$. 
We are going to get rid of the boundaries by turning to the infinite volume model; precisely we are going to show that:
\begin{equation}\label{eq:proof33aim1}
 \bE^{ab}_{N, J, h}\left[D_N\right]
\, =\, \bE_{J, h}\left[D_N\right]+O\left(\frac1N\right),
\end{equation}
where $O\left(1/N\right)$ denotes a quantity whose absolute value is bounded by $1/N$ multiplied by a constant depending on $\gG$.
Observe that the quantity on the left-hand-side of \eqref{eq:proof33aim1} is:
\begin{equation}\label{eq:proof33linearity}
\begin{aligned}
 \bE^{ab}_{N, J, h}\left[D_N\right]
&\,  = \, \frac 1N \sum_{n=1}^N \bP^{ab}_{N, J, h}\left( \gs_n \neq s^{(K)}_n\right) \\
 & \, = \, \frac 1N \sum_{n=1}^N \frac {1}{1+ \exp\left( s^{(K)}_n\, m^{(ab)}_{1, n, N}\right)}+\frac12 \ind_{s_n^{(K)}=0}
\end{aligned}
\end{equation}
and similarly, $\bE_{J, h}\left[D_N(\sigma)\right]$ has the same expression with $m^{(ab)}_{1, n, N}$ replaced by $m_n$. We recall that $m^{(ab)}_{1, n, N}=l^{(a)}_{1, n-1} +2h_n+ r^{(b)}_{n+1, N}$ and that $m_n=l_{n-1} +2h_n+ r_{n+1}$.
Using that $x\mapsto 1/{(1+\exp(x))}$ is 1-Lipschitz and then \eqref{eq:lipshitzbound} we deduce, for $n=1, \dots, N$,
\begin{equation}\label{eq:contrlboundeffect}
\begin{aligned}
\left \vert  
\frac 1 {1+ \exp\left( s^{(F)}_n m^{(ab)}_{1, n, N}\right)} 
-\frac 1 {1+ \exp\left( s^{(F)}_n m_n\right)}
\right \vert
& \le |m^{(ab)}_{1, n, N}-m_n|\\
& \le |l^{(a)}_{1, n-1}-l_{n-1}| + |r^{(b)}_{n+1, N}-r_{n+1}| \\
& \le 2\gG  \left(\exp(-(n-1)\gep)+ \exp(-(N-n)\gep)\right) ,
\end{aligned}
\end{equation}
and therefore \eqref{eq:proof33aim1} follows by summing over $n=1, \dots, N$.

Now, let us turn to controlling the second moment of $D_N$ under $\bP^{ab}_{ N, J, h}$. We have
\begin{equation}\label{eq:controlsecondmom}
\begin{aligned}
\bE^{ab}_{N, J, h}\left[D_N^2\right]=\frac1N\bE^{ab}_{N, J, h}[D_N]+\frac2{N^2}\sum_{1 \le n < m\le N} \bP^{ab}_{N, J, h}\left( \gs_n \neq s^{(K)}_n , \gs_m \neq s^{(K)}_m\right
).
\end{aligned}
\end{equation}
We note that for $1 \le n < m\le N$ 
\begin{equation}\label{eq:forcontrolsecondmom}
\bP_{ N, J, h}^{ab}\left( \gs_n \neq s^{(K)}_n , \gs_m \neq s^{(K)}_m\right)
\,=\,
\bP_{N, J, h}^{ab}\left( \gs_m \neq s^{(K)}_m\right)
\, \bP_{m-1, J, h}^{a(-s^{(K)}_m)}\left( \gs_n \neq s^{(K)}_n\right).
\end{equation}
Proceeding as in \eqref{eq:contrlboundeffect}, we use \eqref{eq:lipshitzbound} to bound the difference between $\bP_{N, J, h}^{ab}\left( \gs_m \neq s^{(K)}_m\right) $ and $\bP_{J, h}\left( \gs_m \neq s^{(K)}_m\right)$. Analogously we  bound  the difference between 
 $ \bP_{m-1, J, h}^{a(-s^{(K)}_m)}\left( \gs_n \neq s^{(K)}_n\right)$ and $ \bP_{J, h}\left( \gs_n \neq s^{(K)}_n\right)$. Observing that $|xy-x'y'|\le |x-x'|+|y-y'|$ when $x,y, x', y'\in [0, 1]$, we derive from \eqref{eq:controlsecondmom} and \eqref{eq:forcontrolsecondmom} that
\begin{equation}\label{eq:controlsecondmoment}
\bE^{ab}_{N, J, h}\left[D_N^2\right]=\left(\bE_{J, h}\left[D_N\right]\right)^2+O\left(\frac1N\right)\, .
\end{equation}
Putting together \eqref{eq:proof33aim1} and \eqref{eq:controlsecondmoment} and using that $D_N$ is bounded, the variance of $D_N$ under $\bP^{ab}_{ N, J, h}$ is $O\left(1/N\right)$. Furthermore, 
$\bE^{ab}_{N, J, h}[D_N]$ converges to the announced quantity since $\bE_{J, h}[D_N]$ does, as already mentioned in \eqref{eq:ergstatconvinexpect}. 
This completes the proof of
Proposition \ref{th:ergstatK} for what concerns the statements in $\bP_{N,J, h}^{ab}$-probability. 
The proof in $\bP_{J, h}$-probability can be obtained either by modifications of the argument we just developed or
by remarking that in reality 
 we have established the convergence for all boundary conditions in $\{-1, +1\}^2$, even random ones,  and 
 from this the case of $\bP_{J, h}$ follows. 
 \qed

\appendix 

 \section{Random walk estimates}
 
\label{sec:A}
Recall that, by \eqref{hyp-S} and \eqref{eq:Sn},  $(S_n)$ is a random walk with centered finite variance increments and $S_0=0$. In this section we consider also the case in which
$S_0=x\in \bbR$ and the law of this process is denoted by $\bbP_x$. When $x=0$ we just drop the dependence on $x$: $\bbP=\bbP_0$.
We will use the following estimate several times, referred to as the Gambler's ruin problem. Let  
\begin{equation}
T^+_0\,:=\, \inf\left\{n\ge 1: S_n >0\right\}\,.
\end{equation}
 By  \cite[Theorem 5.1.7]{cf:LaLi} there exist  positive constants  $x_0$, $c_3$ and $c_4$ such that  for any $x\ge x_0, $
   \begin{equation}
     \,\p\left( \min_{0\le k\le T^+_0}S_k \le -x \right) \,\in\, \left[\frac{c_3}x,   \frac{c_4}x\right]. 
        \label{exitproba1}\end{equation}

The objective of this subsection is to establish Lemma \ref{th:expomoments}, which serves as the main technical tool in proving Lemma \ref{th:modelsclose2}.

We begin with a general fact that follows from the study in \cite{Goldie-Grubel} on a stochastic recurrence equation, in the case  where the solution exhibits a thin tail. 

\begin{lemma}  
\label{l:subexpo} Let $(\eta_j, \Delta_j)_{j\ge 1}$ be an IID sequence of random vectors distributed like $(\eta, \Delta)$. Suppose that $\eta$ and $\Delta$ are nonnegative random variables such that $\p(\Delta>0)>0$ and  that  $\e e^{c \, \eta }< \infty$ for some $c>0$. Then there exists $\rho>0$  such that
  \begin{equation} 
  \sup_{k\ge 0} \e \exp\left( \rho \, \sum_{j=1}^{k+1} \eta_j e^{- \sum_{\ell=j}^{k} \Delta_\ell}  \right)<\infty\, ,
    \end{equation}
with the convention $ \sum_{\ell=k+1}^{k} \Delta_\ell=0$.
\end{lemma}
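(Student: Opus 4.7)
The plan is to reduce the claim to a standard perpetuity estimate. The first step is a reindexing via time-reversal. The map $(\eta_j,\Delta_j)_{j=1}^{k+1}\mapsto (\eta_{k+2-j},\Delta_{k+2-j})_{j=1}^{k+1}$ permutes an IID sequence of pairs and therefore preserves joint law; applying it inside $W_k:=\sum_{j=1}^{k+1}\eta_j e^{-\sum_{\ell=j}^{k}\Delta_\ell}$ and then substituting $\ell'=k+2-\ell$ in the inner sum yields
\[
W_k\,\stackrel{d}{=}\,\eta_1+\sum_{i=2}^{k+1}\eta_i\,e^{-(\Delta_2+\cdots+\Delta_i)}.
\]
In the right-hand side $\Delta_1$ has disappeared, so $\eta_1$ is independent of the remaining sum; after a harmless relabeling and an application of monotone convergence in $k$, the statement of the lemma reduces to showing that $\e e^{\rho Y}<\infty$ for some $\rho>0$, where $Y:=\sum_{j\ge 1}\eta_j e^{-S_j}$ with $S_j:=\Delta_1+\cdots+\Delta_j$ and the pairs $(\eta_j,\Delta_j)_{j\ge 1}$ are IID copies of $(\eta,\Delta)$.

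The second step is to identify $Y$ as a perpetuity. Since $\Delta\ge 0$ and $\p(\Delta>0)>0$ one has $\e\Delta>0$, so $S_j\to\infty$ a.s.\ and the series $Y$ converges a.s. Setting $M_n:=e^{-\Delta_n}\in(0,1]$ and $Q_n:=\eta_n M_n$, one has
\[
Y=\sum_{n\ge 1}Q_n\prod_{i=1}^{n-1}M_i,
\]
the classical form of a perpetuity driven by the IID sequence $(M_n,Q_n)_{n\ge 1}$; equivalently, $Y$ satisfies the distributional fixed-point equation $Y\stackrel{d}{=}M(\eta+Y')$ with $Y'\stackrel{d}{=}Y$ independent of $(M,\eta)$. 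The hypotheses are favourable: the multiplier $M=e^{-\Delta}$ is a.s.\ in $(0,1]$ with $\p(M<1)=\p(\Delta>0)>0$, and the input $Q=M\eta\le\eta$ inherits the finite exponential moment $\e e^{cQ}\le\e e^{c\eta}<\infty$ from the hypothesis on $\eta$. The result of Goldie and Grübel on \emph{perpetuities with thin tails} then yields $\e e^{\rho Y}<\infty$ for some $\rho>0$, which is the desired conclusion.

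The main obstacle is the first step: because $\eta_j$ and $\Delta_j$ may be correlated within a pair, the reversal must be carried out on the pairs (not on the two sequences separately), and it is this joint invariance under permutation of the pair index that is used to strip off $\Delta_1$ and produce the independent free factor $\eta_1$. A secondary, essentially bookkeeping, point is to cast the above sum into the exact format used by Goldie--Grübel; should one prefer a self-contained argument, one may instead exploit the fixed-point equation $Y\stackrel{d}{=}M(\eta+Y')$ directly by choosing $N$ large enough so that $S_N$ is bounded away from $0$ with high probability and decomposing $Y=\sum_{j=1}^N\eta_j e^{-S_j}+e^{-S_N}Y''$, with $Y''\stackrel{d}{=}Y$ independent of the first $N$ terms, to derive a self-improving functional inequality for $\phi(\rho):=\sup_k\e e^{\rho W_k}$ that forces finiteness on a neighbourhood of $0$.
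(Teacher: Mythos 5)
Your proof is correct and follows essentially the same route as the paper's: reverse the index to turn the backward-weighted sum into a forward perpetuity, strip off an independent free term, and invoke Goldie--Gr\"ubel (Theorem 2.1) for the exponential moment of the perpetuity. The only (immaterial) differences are that the paper uses the shift $j\mapsto k+1-j$ and further dominates $\eta_m e^{-Y_m}\le\eta_m e^{-Y_{m-1}}$ to land on the perpetuity with $Q_n=\eta_n$, whereas you use $j\mapsto k+2-j$ and keep $Q_n=\eta_n e^{-\Delta_n}$, noting that it inherits the exponential moment from $\eta$.
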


\begin{proof}
 Let $Y_j:= \Delta_1+...+\Delta_j$ for any $j\ge 1$ and  $Y_0:=0$. 
 Since $(\eta_{k+1-j}, \gD_{k+1-j})_{j=1, \ldots, k+1}$ has the same law as $(\eta_j, \gD_j)_{j=0,1, \ldots, k}$,  we see that, for each fixed $k\ge 0$,  $\sum_{j=1}^{k+1} \eta_j e^{- \sum_{\ell=j}^{k} \Delta_\ell}$  has the same law as $\sum_{\ell=0}^k \eta_\ell e^{-Y_\ell} $, which is bounded above by $\eta_0+\sum_{\ell=1}^\infty \eta_\ell e^{-Y_{\ell-1} } $. By \cite{Goldie-Grubel}, Theorem 2.1, there exists some $\rho>0$ such that $\e \exp(\rho \sum_{\ell=1}^\infty \eta_\ell e^{-Y_{\ell-1} } ) < \infty$, yielding the Lemma. \end{proof}

\medskip

The following estimate ensures the existence of small exponential moments, a requirement for the assumption in Lemma \ref{l:subexpo}.

\medskip

\begin{lemma}
\label{l:rw1}
  Assume \eqref{hyp-S}.  There exist  positive constants $\rho$ and $c_5$ such that for every $x\le 0$  
  \begin{equation} \e_x \left[ e^{\rho  \sum_{n=0}^\infty e^{ 2 S_n} \ind_{\{\max_{0\le i\le n} S_i \le  0\}} }\right]\, \le\,  c_5 (1+|x|)\, .  
  \end{equation}
   \end{lemma}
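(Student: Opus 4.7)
Plan. Under \eqref{hyp-S} one has $\limsup_n S_n = +\infty$ a.s., so $T' := \inf\{n\ge 1: S_n > 0\}$ is a.s.\ finite and, for $x\le 0$, the sum in the statement coincides with $Y := \sum_{n=0}^{T'-1} e^{2 S_n}$. I would prove the exponential-moment bound via a factorial-moment upgrade: set $A_k(x):=\e_x[Y^k]/k!$ and show there exists $C_0\ge 1$, depending only on the law of $h_1$, with $A_k(x)\le C_0^k(1+|x|)$ for all $x\le 0$ and $k\ge 0$.

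Expanding $Y^k$ as a sum over ordered $k$-tuples, grouping by the non-decreasing rearrangement $n_1\le\ldots\le n_k < T'$, and applying the strong Markov property at the smallest time produces the recursion
\[
A_k(x)\, \le\, \e_x\Bigl[\sum_{n=0}^{T'-1} e^{2S_n}\,A_{k-1}(S_n)\Bigr]\, \le\, C_0^{k-1}\,\e_x\Bigl[\sum_{n=0}^{T'-1}e^{2S_n}(1+|S_n|)\Bigr].
\]
So the induction reduces to the single first-moment estimate
\[
(\star)\qquad \e_x\Bigl[\sum_{n=0}^{T'-1}e^{2S_n}(1+|S_n|)\Bigr]\, \le\, C_1(1+|x|),
\]
with $C_1$ independent of $x\le 0$. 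Taking $C_0 := \max(C_1,1)$ closes the induction, and summing gives $\e_x[e^{\rho Y}] = \sum_{k\ge 0}\rho^k A_k(x) \le (1+|x|)/(1-\rho C_0)$ for any $\rho<1/C_0$; the lemma then follows with, for instance, $\rho = 1/(2C_0)$ and $c_5 = 2$.

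The main obstacle is the Green-kernel estimate $(\star)$. Writing the left-hand side as $\int_{(-\infty,0]}G(x,dy)\,e^{2y}(1+|y|)$, with $G$ the Green kernel of the walk killed at first entry to $(0,\infty)$, classical Wiener--Hopf factorisation (cf.\ \cite{cf:Spitzer}) yields $G(x,dy)\le C\,V(-x)\,U(-dy)$, where $V$ and $U$ are the renewal functions of the strict ascending and descending ladder heights of $(S_n)$. The centred, finite-variance hypothesis \eqref{hyp-S} implies via Spitzer's theorem that both ladder heights have finite mean, so $V(z) = O(1+z)$ and $U([0,z]) = O(1+z)$; integrating $e^{2y}(1+|y|)$ against this bound (using $\int_0^\infty(1+z)^3 e^{-2z}\,dz<\infty$ after the change $z = -y$) yields $(\star)$. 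A self-contained alternative uses the local limit theorem for walks killed at $T'$, $\p_x(S_n\in dy,\,n<T')\asymp (1+|x|)(1+|y|)\, n^{-3/2}$ in the diffusive regime, which delivers $(\star)$ upon summing in $n$ thanks to the $n^{-3/2}$ decay.
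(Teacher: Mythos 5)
Your proposal is correct and follows essentially the same route as the paper: reduce to the first-moment Green-kernel estimate $(\star)$ (the paper establishes it from the local limit theorem of Kersting--Vatutin, exactly your ``self-contained alternative''), then control factorial moments via iterated Markov and sum the exponential series. One small expository point: the recursion should be stated for the non-decreasing--tuple quantity $B_k(x):=\e_x\bigl[\sum_{n_1\le\dots\le n_k<T'}\prod_i e^{2S_{n_i}}\bigr]$, which genuinely satisfies $B_k(x)=\e_x\bigl[\sum_{n<T'} e^{2S_n}B_{k-1}(S_n)\bigr]$, and then one concludes via $A_k\le B_k$; the inequality $A_k(x)\le\e_x\bigl[\sum e^{2S_n}A_{k-1}(S_n)\bigr]$ as written is not justified since $B_{k-1}\ge A_{k-1}$, but this does not affect the argument. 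The paper organizes the same expansion slightly differently (re-indexing by distinct times $\ell_1<\dots<\ell_m$ with multiplicities $r_i$ and using $e^{2r_iS_{\ell_i}}\le e^{2S_{\ell_i}}$), while your direct recursion on $B_k$ is in fact a bit cleaner.
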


\medskip

\begin{proof} 
By applying \cite[Corollary 4.2]{kersting-vatutin}  { for  $\lambda<2$} we obtain that there exists  $c_6>0$ such that for every $x \le 0$
\begin{equation*} 
\e_x \left[e^{2 S_n}  (1+ |S_n|) \ind_{\{\max_{0\le i\le n} S_i  \le  0\}}\right] 
\,\le\,  c_6 \, (1+|x|)\, n^{-3/2} \, ,  
 \end{equation*}
  which yields that there exists $c_7>0$ such that
  \begin{equation} 
  \e_x \left[\sum_{n=0}^\infty  e^{2 S_n}  (1+ |S_n|) \ind_{\{\max_{0\le i\le n} S_i \le  0\}}\right]\, \le\,  c_7\,(1+|x|), \qquad \text{for every } x\le 0\,. 
  \label{eq:sume2Sn} 
  \end{equation}
\noindent For every integer $p\ge 2$, applying the Markov property successively at ${\ell_{p-1}}, ..., \ell_1$ in the following sum, we deduce from \eqref{eq:sume2Sn} that for any $x\le 0$,  \begin{equation*} 
\sum_{0\le \ell_1\le ...\le  \ell_p}\e_x \left[ \prod_{i=1}^{p} e^{2 S_{\ell_i}} \ind_{\{\max_{0\le j \le \ell_p} S_j \le 0\}}\right]
\,\le\,   c_7^p \, (1+|x|)\,. 
 \end{equation*}

\noindent It follows that for every integer $p\ge 1$ and $x\le 0$ \begin{align*}
 \e_x  \Bigg[ \Bigg(\sum_{n=0}^\infty e^{ 2 S_n}  \ind_{\{\max_{0\le i\le n} S_i \le  0\}}\Bigg)^p\Bigg]
\, & \le\,  p! \sum_{0\le \ell_1\le ...\le  \ell_p}\e_x \left[ \prod_{i=1}^{p} e^{2 S_{\ell_i}} \ind_{\{\max_{0\le j \le \ell_p} S_j \le 0\}}\right]
\\
\, & \le\, 
p! \,   c_7^p \, (1+|x|)\,,   
  \end{align*}

%
%


\noindent which implies that for every $\rho \in (0, 1/c_7)$
 \begin{equation*}
 \e_x\left[e^{\rho \sum_{n=0}^\infty e^{ 2 S_n} \ind_{\{\max_{0\le i\le n} S_i \le  0\}}}\right]\, \le \, \frac{1+|x|}{1- \rho  \, c_7},
 \end{equation*}
and the proof of Lemma~\ref{l:rw1} is complete.
\end{proof}
\medskip 

Below are our main technical estimates: they are  used in the proof of Lemma \ref{th:modelsclose2}. {Recall \eqref{eq:t-down-up} for the definitions of $\td(\gG)$ and $\tup(\gG)$ and \eqref{eq:defudown} for $\tudown(\Gamma)$, as well as \eqref{eq:completenotation1}, \eqref{eq:completenotation2} for ${\tt t}_1(\Gamma)$ and $\ss_1(\Gamma)$. In particular, $\td(\gG)= {\tt t}_1(\Gamma)$ and $\tudown(\Gamma)=\ss_1(\Gamma)$.} 
\medskip

\begin{lemma}\label{th:expomoments}  Assume \eqref{hyp-S}. Then for every $\kappa>0$, there exists $\widetilde{C}_\kappa>0$ such that we have
\smallskip 

(i) 
\begin{equation}
	\bbP\left(\sum_{n=0}^{\td(\Gamma)-1} e^{2\left(S_n-S_{\tudown(\Gamma)}\right)}
	\ge  \widetilde{C}_\kappa \log \gG\right)\,=\,O\left(\gG^{-\kappa}\right)\,. \label{expon1}
\end{equation}	


(ii) 
\begin{equation}
\bbP\left(\sum_{n=0}^{\tudown(\gG)-1}  
e^{2\left(S_{\tudown(\gG)}-S_n-\Gamma\right)}
\ge  \widetilde{C}_\kappa \log \gG \, , \, \td(\gG)<\tup(\gG)\right)\,
=\,O\left(\gG^{-\kappa}\right)\,. \label{expon2} 
\end{equation}
\end{lemma}

\medskip
\begin{proof} 
We start with item $(i)$. Recall \eqref{def-rhok}, \eqref{j*} and $\tudown(\Gamma)=\varrho_{\mathcal K}$.     By definition,     
   we have the following bound
 \begin{equation} \label{leqK}
 \sum_{n=0}^{\td(\Gamma)-1} e^{2 \left(S_n- S_{\tudown(\Gamma)}\right)}\, \le \, 
\sum_{j=1}^{{\mathcal K}+1}
 \sum_{n=\varrho_{j-1}}^{\varrho_{j}-1} e^{ 2 \left(S_n- S_{\varrho_{\mathcal K}}\right)} .\end{equation}

\noindent For every {{ $j= 1, \dots, \mathcal K+1$}} we have
\begin{equation} 
\sum_{n=\varrho_{j-1}}^{\varrho_{j}-1} e^{ 2 (S_n- S_{\varrho_{\mathcal K}})} =e^{2 (S_{\varrho_{j-1}}- S_{\varrho_{\mathcal K}})} \, \sum_{n=\varrho_{j-1}}^{\varrho_{j}-1} e^{ 2 (S_n- S_{\varrho_{j-1}})} =  \eta_{j} e^{-   \sum_{\ell=j}^{\mathcal K} \Delta_\ell}, 
 \end{equation}
 with {{ the convention $\sum_{\ell= {\mathcal K}+1}^{\mathcal K} \Delta_\ell=0$ and where we denoted}} for $j=1,2, \ldots$  
 \begin{equation}
 \eta_j\,:=\, \sum_{n=\varrho_{j-1}}^{\varrho_{j}-1} e^{ 2 (S_n- S_{\varrho_{j-1}})}\ \text{ and } \  
 \Delta_j\,:=\,  2( S_{\varrho_j}- S_{\varrho_{j-1}})>0\, .
  \end{equation}
  
Remark that the law of ${\mathcal K}$ is geometric: 
\begin{equation} 
\p({\mathcal K}=k)\,=\,  p_\Gamma (1-p_\Gamma)^{k}, \qquad k= 0,1, \ldots , 
 \end{equation}
with  $p_\Gamma\,:=\,\p\left( \min_{0\le k\le \varrho_1} S_k \le -\Gamma \right) $. By \eqref{exitproba1}, $p_\Gamma\,\in\, \left[\frac{c_3}\Gamma,   \frac{c_4}\Gamma\right] $    for all  large $\Gamma$. It follows that
 \begin{equation} \p\left({\mathcal K} \ge \Gamma^2\right) \,\le\,  \Gamma^{-\kappa}\,. 
   \label{j*>}\end{equation}

  \noindent By \eqref{leqK} and \eqref{j*>},   
  \begin{equation}  \label{I_1}
  \p\Big(\sum_{n=0}^{\td(\Gamma)-1} e^{2 \left(S_n- S_{\tudown(\Gamma)}\right)} \ge \widetilde{C}_\kappa \log \Gamma\Big) \le
 \Gamma^{-\kappa} +  \sum_{k=0}^{\Gamma^2-1} \p\Big(\sum_{j=1}^{k+1} \eta_j e^{-   \sum_{\ell=j}^k \Delta_\ell} \ge C_\gk' \log \Gamma\Big)   . 
 \end{equation}

 Note that $(\eta_j, \Delta_j)_{j\ge 1}$ are IID and that $\eta_j$ is distributed as $\sum_{n=0}^{\varrho_1-1} e^{2 S_n}$ which has finite small exponential moments by  Lemma \ref{l:rw1}. Therefore we can apply Lemma \ref{l:subexpo} and obtain that for some positive constant $c_8$, 
\begin{equation}  \p\Big(\sum_{n=0}^{\td(\Gamma)-1} e^{2 \left(S_n- S_{\tudown(\Gamma)}\right)} \ge \widetilde{C}_\kappa \log \Gamma\Big)  
\le
 \Gamma^{-\kappa} +    c_8\, \Gamma^2\,   e^{-\rho \,  \widetilde{C}_\kappa \log \Gamma}\, , 
 \end{equation}
 yielding \eqref{expon1}, as soon as $\widetilde{C}_\kappa>(\gk+2)/\rho$.  

 \medskip

For what concerns $(ii)$  first we claim that there exists some $b=b(\kappa)>0$ such that for every large $\Gamma$
 \begin{equation}  
 \p\left(\tudown(\Gamma)\ge \Gamma^b\right) \,\le \, 2\, \Gamma^{-\kappa}\,.
  \label{eq:u>} 
  \end{equation}
 In fact, we use  $\tudown(\Gamma)=\varrho_{\mathcal K}$ with ${\mathcal K}$ given in \eqref{j*} and estimated in \eqref{j*>}: we obtain
 \begin{equation}
   \p\left(\tudown(\Gamma)\ge \Gamma^b\right) \,\le \, \Gamma^{-\kappa} + \p\left(\varrho_{\Gamma^2} \ge \Gamma^b\right)\,. 
    \label{eq:u>1}
 \end{equation}  
  By  \cite[Theorem 4.6]{kersting-vatutin}, there exists  $c_{9}>0$ such that 
  \begin{equation}  
  \p\left(\varrho_1  \ge  n\right) \stackrel{n \to \infty}\sim  \frac{c_{9}}{\sqrt{n}}\, . 
  \end{equation}
Since $\varrho_k$ is distributed as the sum of $k$ IID copies of $\varrho_1$, we have that there exists $c_{10}>0$ such that
\begin{equation}
 \p\left(\varrho_k \ge n\right) \,\le\,  k\, \p\left(\varrho_1 \ge  \frac{n}{k}\right)\,  \le\,  c_{10} \frac{k^{3/2}}{\sqrt{n}}\, .
 \end{equation}  
 This implies that  $\p(\varrho_{\Gamma^2} \ge \Gamma^b) \le c_{2} \Gamma^{3-b/2} \le \Gamma^{-\kappa}$ if we choose {{ $b> 2\kappa+6$}} ($\Gamma$ being large). Therefore \eqref{eq:u>} is established.

Now we give the proof of \eqref{expon2}. On the event $\{\td(\Gamma)< \tup(\Gamma)\}$,  for every $n \le \tudown(\Gamma)$, $S_{\tudown(\Gamma)} - S_n < \Gamma$ and  
\begin{equation}   
\p\left( \sum_{n=0}^{\tudown(\Gamma)-1}  
e^{2\left(S_{\tudown(\gG)}-S_n-\Gamma\right)}
 \ge \widetilde{C}_\kappa \log \Gamma, \,\td(\Gamma)< \tup(\Gamma)\right)
  \le 
  \p\left(\tudown(\Gamma)\ge \Gamma^b\right)  + \sum_{k=0}^{\Gamma^b-1} J_k,
   \label{eq:sum_u}
  \end{equation}
where 
\begin{equation}
J_k\,:=\,   \p\left(\sum_{n=0}^{k-1} e^{2 \left(S_k-S_n-\Gamma\right)} \ge \widetilde{C}_\kappa \log \Gamma , \,  \max_{0\le n < k} (S_k-S_n) < \Gamma\right)\,.
\end{equation}
 Since $(S_k-S_{k-j})_{j=0,\ldots, k-1}$ is distributed like $(S_j)_{j=1,\ldots, k}$, we get that 
 \begin{eqnarray}  J_k &=& \p\left(\sum_{j=1}^k e^{2 (S_j-\Gamma)} \ge \widetilde{C}_\kappa \log \Gamma , \,  \max_{1\le j\le k} S_j < \Gamma \right)
 \nonumber
\\
&=& \p_{-\Gamma}\left(\sum_{j=1}^k e^{2 S_j} \times \ind_{\{\max_{1\le i\le k} S_i< 0\}} \ge \widetilde{C}_\kappa \log \Gamma \right) 
\nonumber \\
&\le&
\p_{-\Gamma}\left(\sum_{j=0}^\infty e^{2 S_j} \ind_{\{\max_{1\le i\le j} S_i< 0\}} \ge \widetilde{C}_\kappa \log \Gamma \right) 
\, \le \,  \Gamma^{-\kappa-b}, 
\end{eqnarray}
for large $\Gamma$: the last inequality holds for $\widetilde{C}_\kappa$ large enough, depending on $\kappa$ and is due to Lemma \ref{l:rw1}. By \eqref{eq:sum_u} and \eqref{eq:u>}, we get that 
\begin{equation}
\p\left( \sum_{n=0}^{\tudown(\Gamma)} e^{-2 (S_{\tudown(\Gamma)}-S_n-\gG)} \ge \widetilde{C}_\kappa\log \Gamma, \,\td(\Gamma)< \tup(\Gamma)\right) \,\le\, 3\, \Gamma^{-\kappa}\, ,
\end{equation}
and the proof of Lemma~\ref{th:expomoments} is complete.
\end{proof}

\medskip

The tools introduced in this section are useful to establish also the following result: set  ${\tt t}_0(\Gamma):=0$ and recall the definition 
of the sequences of random times $({\tt t}_n(\Gamma))_{n\in \bbN}$ and $(\ss_n(\Gamma))_{n\in \bbN}$ from Section~\ref{sec:Fisher}.

\medskip

\begin{lemma}
\label{th:ren-u}
   The two sequences of random vectors  in $\bbR^2$   
   \begin{equation}
    \left(\ss_n(\Gamma)- {\tt t}_{n-1}(\Gamma), S_{\ss_n(\Gamma)}- S_{{\tt t}_{n-1}(\Gamma)}\right)_{n\in \bbN}
      \text{ and }
      \left({\tt t}_n(\Gamma)- \ss_n(\Gamma), S_{{\tt t}_n(\Gamma)}- S_{\ss_n(\Gamma)}\right)_{n\in \bbN}
      \end{equation}   
   are independent and, both of them, are sequences of independent random vectors.     
       Moreover the two (sub)sequences 
 \begin{equation}
    \left(\ss_n(\Gamma)- {\tt t}_{n-1}(\Gamma), S_{\ss_n(\Gamma)}- S_{{\tt t}_{n-1}(\Gamma)}\right)_{n\in 2\bbN}
      \text{ and }
      \left({\tt t}_n(\Gamma)- \ss_n(\Gamma), S_{{\tt t}_n(\Gamma)}- S_{\ss_n(\Gamma)}\right)_{n\in 2\bbN}
      \end{equation}         
  are IID and the same holds if we consider the (sub)sequences with $n \in 2\bbN-1$.
 \end{lemma}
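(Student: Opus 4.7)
The plan has two layers. The outer one gives independence of the pairs
\[
A_n\,:=\,\bigl(\ss_n(\Gamma)-\ttt_{n-1}(\Gamma),\,S_{\ss_n(\Gamma)}-S_{\ttt_{n-1}(\Gamma)}\bigr),\ \ B_n\,:=\,\bigl(\ttt_n(\Gamma)-\ss_n(\Gamma),\,S_{\ttt_n(\Gamma)}-S_{\ss_n(\Gamma)}\bigr),
\]
across $n\in\bbN$ (with $\ttt_0(\Gamma):=0$), via the strong Markov property at the stopping times $\ttt_n(\Gamma)$. Indeed, each $\ttt_n(\Gamma)$ is a stopping time of the natural filtration of $(S_k)$, so iterating the strong Markov property shows that the inter-$\ttt$ excursions
\[
\mathcal X_n\,:=\,\bigl(S_{\ttt_{n-1}(\Gamma)+k}-S_{\ttt_{n-1}(\Gamma)}\bigr)_{0\le k\le \ttt_n(\Gamma)-\ttt_{n-1}(\Gamma)},\qquad n\in\bbN,
\]
form an independent family. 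Since $(A_n,B_n)$ is a translation-invariant measurable functional of $\mathcal X_n$, the pairs are independent across $n$. Moreover, by the restart property, the $\mathcal X_n$ with $n$ odd all have the law of $(S_k)$ stopped at $\ttt_1(\Gamma)$, and those with $n$ even all have the law of $(-S_k)$ stopped at its first $\Gamma$-decrease time; this gives the IID conclusions on the odd and even subsequences.

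The substantive step is the inner independence $A_n\perp B_n$ for each fixed $n$. By the reduction above I focus on $n=1$ with $\ttt_1(\Gamma)$ a $\Gamma$-decrease time (the other parity follows by replacing $S$ with $-S$). I decompose the walk $(S_k)_{k\ge 0}$ at its strict ascending ladder times $\tau_0:=0$, $\tau_{k+1}:=\inf\{m>\tau_k:S_m>S_{\tau_k}\}$, and consider the sub-excursions $\mathcal E_k:=(S_{\tau_k+j}-S_{\tau_k})_{0\le j<\tau_{k+1}-\tau_k}$; these are IID by strong Markov at each $\tau_k$. Call $\mathcal E_k$ \emph{long} if $\min_j(S_{\tau_k+j}-S_{\tau_k})\le-\Gamma$ and \emph{short} otherwise, and set $N:=\inf\{k\ge 0:\mathcal E_k\text{ is long}\}$, a geometric random variable. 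The key observation is that $\ss_1(\Gamma)=\tau_N$ and $\ttt_1(\Gamma)=\inf\{n>\tau_N:S_n\le S_{\tau_N}-\Gamma\}$: the running maximum $\max_{k\le n}S_k$ equals $S_{\tau_j}$ for $\tau_j\le n<\tau_{j+1}$, so the event $\{\max_{k\le n}S_k-S_n\ge\Gamma\}$ cannot occur before $\tau_N$ (all earlier sub-excursions being short) and occurs first inside $\mathcal E_N$, at its first passage to level $-\Gamma$.

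It follows that $A_1$ is a measurable function of $(N,\mathcal E_0,\ldots,\mathcal E_{N-1})$ while $B_1$ is a measurable function of the first-passage piece of $\mathcal E_N$. Conditionally on $\{N=k\}$, the variables $(\mathcal E_0,\ldots,\mathcal E_{k-1})$ are IID with the law of a short excursion, $\mathcal E_k$ is independent of them with the law of a long excursion, and the conditional law of $B_1$ does not depend on $k$; a routine conditioning argument then yields $A_1\perp B_1$. Combining this with the outer strong Markov reduction gives the joint independence of the full family $\{A_n,B_n:n\in\bbN\}$, from which every claim of the lemma follows, including the mutual independence of the two complete sequences.

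The main obstacle is exactly this within-excursion independence, and more specifically the combinatorial identification of $\ss_1(\Gamma)$ and $\ttt_1(\Gamma)$ within the ladder decomposition. A technical caveat specific to the possibly lattice law of $h_1$ is that the leftmost-argmax convention in the definition of $\ss_n(\Gamma)$ must be matched with \emph{strict} ascending ladder times, so that $\ss_1(\Gamma)=\tau_N$ holds without ambiguity.
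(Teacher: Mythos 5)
Your proposal is correct and follows essentially the same route as the paper. The paper also identifies $\ss_1(\Gamma)$ as the ladder epoch $\varrho_{\mathcal K}$ (your $\tau_N$), observes that conditionally on the $\sigma$-algebra generated by $(\ss_1(\Gamma),(S_j)_{j\le\ss_1(\Gamma)})$ the law of the piece $(S_{\ss_1(\Gamma)+i}-S_{\ss_1(\Gamma)})_{0\le i\le\ttt_1(\Gamma)-\ss_1(\Gamma)}$ is that of a walk killed at first passage below $-\Gamma$ conditioned to drop before the first ladder time (your ``long excursion'' conditioning, which indeed does not depend on $\mathcal K$), and then iterates via the strong Markov property at $\ttt_1(\Gamma)$ exactly as in your outer layer. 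The only nit: as written, your $\mathcal E_k$ is the half-open trajectory $(S_{\tau_k+j}-S_{\tau_k})_{0\le j<\tau_{k+1}-\tau_k}$, which does not record $S_{\tau_{k+1}}-S_{\tau_k}$; since $S_{\tau_N}=\sum_{k<N}(S_{\tau_{k+1}}-S_{\tau_k})$, you should close the excursion (allow $j=\tau_{k+1}-\tau_k$) so that $A_1$ is genuinely a function of $(N,\mathcal E_0,\dots,\mathcal E_{N-1})$. Your caveat on matching the leftmost-argmax convention to \emph{strict} ascending ladder times is the right thing to flag and is consistent with the paper's definition \eqref{def-rhok}.
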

 \medskip

\begin{proof}
Using the fact that $\ss_1(\Gamma)=\varrho_{\mathcal K}$ with ${\mathcal K}$ defined in \eqref{j*}, we remark that, conditioned on $\sigma( \ss_1(\Gamma),  (S_j)_{j=1, 
\ldots,\ss_1(\Gamma)})$, the law of $(S_{\ss_1(\Gamma)+i}-S_{\ss_1(\Gamma)})_{i=0, \ldots, {\tt t}_1(\Gamma)}$ 
coincides with the law of  $(S_i)_{i=0, \ldots,  T_S(-\Gamma)}$ conditioned on $\{T_S(-\Gamma)< \varrho_1\}$, where $T_S(-\Gamma):= \min\{n\ge 0: S_n \le -\Gamma\}$ and $\varrho_1$ is defined in \eqref{def-rhok}. It follows that $({\tt t}_1(\Gamma)-\ss_1(\Gamma),  S_{{\tt t}_1(\Gamma)}- S_{\ss_1(\Gamma)})$ is   independent of $(\ss_1(\Gamma), S_{\ss_1(\Gamma)})$.

Since ${\tt t}_2(\Gamma)-{\tt t}_1(\Gamma)$ is the first time of $\Gamma$-increase of  $(S_{{\tt t}_1(\Gamma)+i}- S_{{\tt t}_1(\Gamma)})_{i=0, 1, \ldots}$, and $\ss_2(\Gamma)-{\tt t}_1(\Gamma)$  is the corresponding time at which  the minimum $\min_{0\le i \le {\tt t}_2(\Gamma)-{\tt t}_1(\Gamma)} (S_{{\tt t}_1(\Gamma)+i}- S_{{\tt t}_1(\Gamma)})$ is achieved, we deduce from the strong Markov property that  $(\ss_2(\Gamma)-{\tt t}_1(\Gamma), S_{\ss_2(\Gamma)}- S_{{\tt t}_1(\Gamma)})$  is independent of $\sigma(S_n, n=0, \ldots, {\tt t}_1(\Gamma))$, in particular independent of $({\tt t}_1(\Gamma)-\ss_1(\Gamma),  S_{{\tt t}_1(\Gamma)}- S_{\ss_1(\Gamma)})$ and $(\ss_1(\Gamma), S_{\ss_1(\Gamma)})$. 

The proof is completed by iterating the arguments above.
\end{proof}

\section{Proofs for the reflected random walk: analytic approach}\label{sec:appendanalytic}
\label{sec:proofs-constrained}

\begin{proof}[Proof of Proposition~\ref{th:explicit-hat}]
For what concerns 
\eqref{eq:explicit-hat-l} we consider the case $ \mathtt{s}^\downarrow(\gG)> \mathtt{s}^\uparrow(\gG)$ (see Fig.~\ref{fig:l0}) and
 observe that  there is  an increase  of
at least $\gG$ for $(S_{n})_{n= \mathtt{s}^\downarrow (\gG), \ldots, \mathtt{v}^\downarrow (\gG)}$. Recalling that $\ls _n\in [-\gG, \gG]$ and that 
 $\ls$ is driven by $2 S$, we see that 
the process $(\ls_{n})_{n= \mathtt{s}^\downarrow (\gG), \mathtt{s}^\downarrow (\gG)+1,  \ldots}$ hits $\gG$ for a value of $n\in 
\{ \mathtt{s}^\downarrow (\gG)+1, \ldots, \mathtt{v}^\downarrow (\gG)\}$ and, using the definition of $ \mathtt{v}^\downarrow (\gG)$, we see also that 
$\ls_{ \mathtt{v}^\downarrow (\gG)}=\gG$ too:  the process hits $\gG$ not later than $\mathtt{v}^\downarrow (\gG)$ because
the driving process increases of at least $2\gG$, but $\ls$ is set to $\gG$ if the driving process \emph{tries} to make it larger than $\gG$. From time $n$ the process tries again to follow the driving process, except that it cannot become larger than $\gG$.

 \begin{figure}[h]
\begin{center}
\includegraphics[width=13 cm]{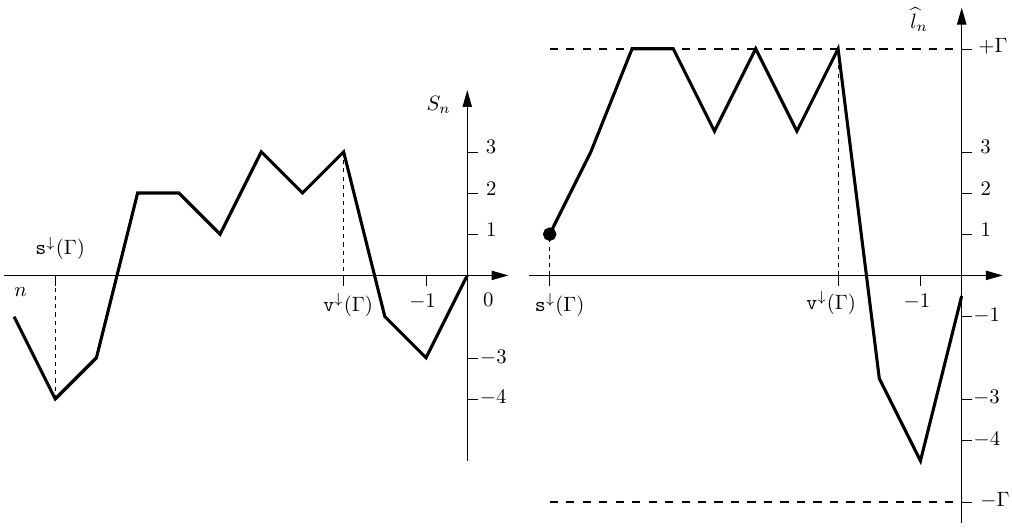}
\end{center}
\caption{\label{fig:l0} In this figure $\gG=5.5$.  
The $\ls$ process, on the right, is driven by twice the random walk trajectory, on the left. We look at the trajectory from $\mathtt{s}^{\downarrow}(\gG)$ and we use $\ls_{\mathtt{s}^{\downarrow}(\gG)}\in [-\gG, \gG]$. The fact that the $S$ trajectory increases of at least $\gG$ from $\mathtt{s}^{\downarrow}(\gG)$ to $\mathtt{v}^{\downarrow}(\gG)$ guarantees that the $\ls$ trajectory hits 
$\gG$ not later than $\mathtt{v}^{\downarrow}(\gG)$. Moreover, $\ls_{\mathtt{v}^{\downarrow}(\gG)}=\gG$ too. After 
$\mathtt{v}^{\downarrow}(\gG)$ and up to time 0 the $\ls$ trajectory copies the increments of $2S$.
} 
\end{figure}

Now we observe that if $\mathtt{v}^\downarrow (\gG)=0$ we have $\ls_0=\gG$ and therefore 
 \eqref{eq:explicit-hat-l} holds in this case. If instead $\mathtt{v}^\downarrow (\gG)<0$, see Fig.~\ref{fig:l0}, the increments of $(\ls_{n})_{n= \mathtt{v}^\downarrow (\gG),   \ldots, 0}$
 coincide with the increments of $(2S_{n})_{n= \mathtt{v}^\downarrow (\gG),   \ldots, 0}$, and also in this case  \eqref{eq:explicit-hat-l} holds.
 
 \begin{figure}[h]
\begin{center}
\includegraphics[width=13 cm]{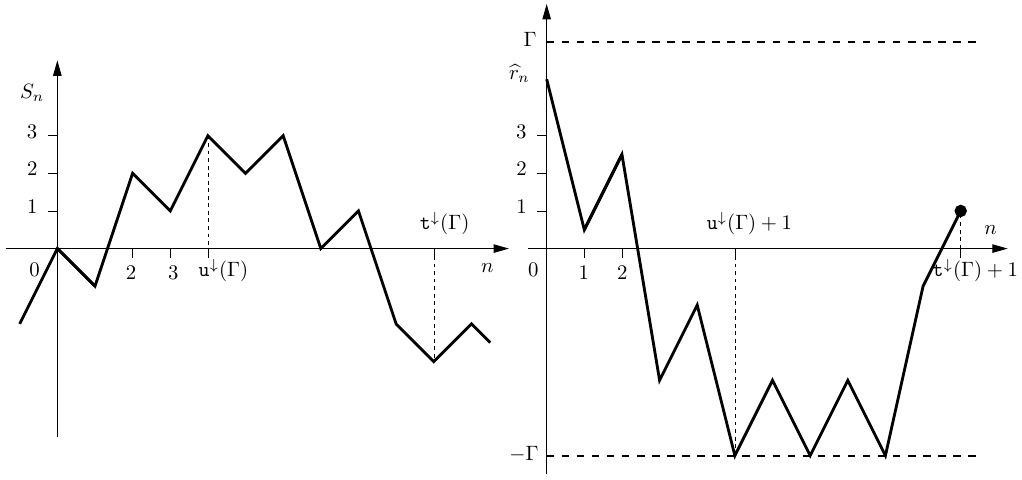}
\end{center}
\caption{\label{fig:r1} In this figure $\gG=5.5$. Equation \eqref{eq:explicit-hat-r} holds because the evolution of $(\rs_{-n})$, i.e. we are looking from right to left,
 repeats the increments of $(-2S_{-n-1})$ except that the $\rs$ process is confined to $[-\gG, \gG]$. Always arguing with the reversed time arrow, we see that, 
 regardless of the  value of $\rs_{\mathtt{t}^\downarrow (\gG) +1   }\in [-\gG, \gG]$,
the $\rs$ process is going to hit $\gG$ not later than $\mathtt{u}^\downarrow (\gG)$ and that, in any case, $\rs_{\mathtt{u}^\downarrow (\gG)+1}= -\gG$. After that moment and up to time 1 the 
evolution of $\rs$ reproduces the increments of $(-2S_{-n-1})$.}
\end{figure}

The argument for  $ \mathtt{s}^\downarrow(\gG)< \mathtt{s}^\uparrow(\gG)$ is entirely analogous. Alternatively,    we can map the case $ \mathtt{s}^\downarrow(\gG)< \mathtt{s}^\uparrow(\gG)$ to $ \mathtt{s}^\downarrow(\gG)> \mathtt{s}^\uparrow(\gG)$ by replacing $S$ with $-S$ and $\ls$ with $-\ls$. 

For what concerns \eqref{eq:explicit-hat-r} one can repeat the very same argument or use the time reversal symmetry of the problem: 
in any case one has to take into account the shift of one site pointed out in Remark \ref{rem:time-reversal}, see Fig.~\ref{fig:r1} and its caption.
\end{proof}

\begin{proof}[Proof of Proposition  \ref{th:lr-Fisher}]
{We use the notion of $\gG$-extrema over $\bbZ$ introduced in Section~\ref{sec:Fisher-Z}, but we present an interesting characterisation of these which may be of help for the reader: a site $\tu\in\bbZ$ is a $\gG$-maximum of $(S_n)_{n\in\bbZ}$ over $\bbZ$ if and only if there exist $\ts, \ttt\in\bbZ$ such that $\ts\le \tu\le \ttt$, $S$ is maximal at site $\tu$ on the interval $[\![\ts, \ttt]\!]$,  $S_{\ts}\le S_{\tu}-\gG$ and $S_{\ttt}\le S_{\tu}-\gG$. $\gG$-minima over $\bbZ$ are characterised similarly. With this characterisation in mind, we introduce a slightly richer notation for the \emph{multiple} $\gG$-extrema.} In fact, here we denote by $\tU_j, j\in\bbZ$ all maximal sets of adjacent $\gG$-extrema (adjacent means ``not separated by opposite $\gG$-extrema"), and observe that $S$ is constant over each $\tU_j, j\in\bbZ$. The sets $\tU_j , j\in\bbZ$ are ordered in the natural way, inherited from the usual order on $\bbZ$. We set the indexation by requiring that $\tU_1$ be the left-most class which is included in $\N$. Then, the indexation corresponds to that of Section~\ref{sec:Fisher-Z}, and we notice that $u_j=\min \tU_j$ and $u_j^+=\max \tU_j$.

Recall that $s^{(F)}$ is defined by: 
\begin{itemize}
\item $s^{(F)}_n=1$ if there exists $j\in\bbZ$ such that $\tu_j^+< n\le \tu_{j+1}$ and $(\tu_j^+, \tu_{j+1})$ is a $\gG$-ascending stretch of height strictly larger than $\gG$;
\item $s^{(F)}_n=-1$ if there exists $j\in\bbZ$ such that $\tu_j^+< n\le \tu_{j+1}$ and $(\tu_j^+, \tu_{j+1})$ is a $\gG$-descending stretch of height strictly larger than $\gG$;
\item $s^{(F)}_n=0$ otherwise.
\end{itemize}

We compute $\ls$ and $\rs$ on a $\gG$-ascending stretch, see Fig.~\ref{fig:ascend}.
Take $j\in\bbZ$ and assume that $(\tu_j^+, \tu_{j+1})$ is a $\gG$-ascending stretch, i.e. 
$ S_{\tu_{j+1}}-S_{\tu_j^+}\ge \gG$ and $S_n-S_m> -\gG$ whenever $\tu_j^+\le m<n\le \tu_{j+1}$. We are going to express $\ls$ on $\{\tu_j, \dots, \tu_{j+1}\}$, then $\rs$ on $\{\tu_j+1, \dots, \tu_{j+1}+1\}$, and finally $\ms$ on $\{\tu_j+1, \dots, \tu_{j+1}\}$.

\begin{figure}[h]
\begin{center}
\includegraphics[width=12 cm]{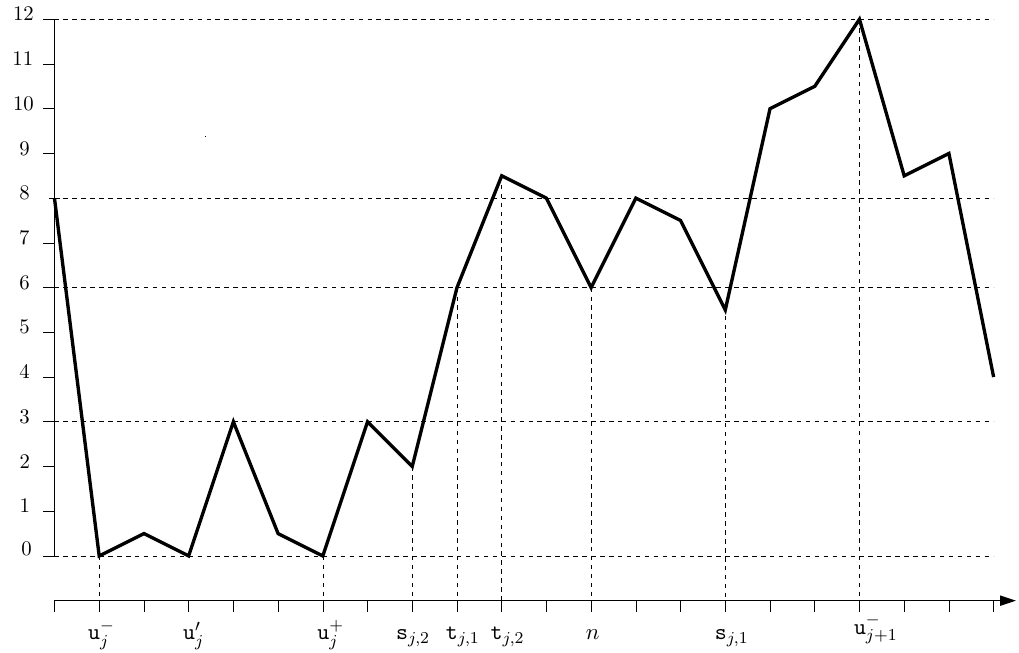}
\end{center}
\caption{\label{fig:ascend} In this figure we present a $\gG$-ascending stretch of $S$, for $\gG\in [3, 8)$: the set of $\Gamma$-minima
$\mathtt{U}_j=\{\mathtt{u}_j, \mathtt{u}'_j, \mathtt{u}_j^+\}$ and the set of $\gG$-maxima $\mathtt{U}_j=\{\mathtt{u}_{j+1}\}$
are independent of the choice of $\gG$ in $ [3, 8)$. We use $\mathtt{t}_{j,1}$ for $\mathtt{t}_{j}(\gG)$ and $\mathtt{s}_{j,1}$ for $\mathtt{s}_{j}(\gG)$  when $\gG \in [3, 6)$.
When $\gG \in [6.5, 8)$ instead we use $\mathtt{t}_{j,2}$ for $\mathtt{t}_{j}(\gG)$ and $\mathtt{s}_{j,2}$ for $\mathtt{s}_{j}(\gG)$.
Note that $\mathtt{t}_{j,1}< \mathtt{s}_{j,1}$, while  $\mathtt{t}_{j,2}>\mathtt{s}_{j,2}$. So if we consider the site $n$  for $\gG\in [3, 6)$ we have to apply 
the middle option in the right-hand side of \eqref{eq:<case}: we obtain that $\widehat{m}_n=2\gG -6>0$. If instead we consider the site $n$  for $\gG\in [6.5, 8)$, we 
have to apply 
the lowest option in the right-hand side of \eqref{eq:<case}: we obtain that $\widehat{m}_n=24>0$. Note that when $\gG \in [6, 6.5)$, then $\mathtt{t}_j(\gG)$ is $\mathtt{t}_{j,2}$,
but $\mathtt{s}_j(\gG)$ still coincides with $\mathtt{s}_{j,1}$. Note also that if $\gG< 3$ then $\tt{U}_j$ is no longer a set of $\Gamma$-minima. And if $\gG\ge 8$ the portion of random walk in the image is insufficient to determine the $\gG$-extrema.  
}
\end{figure}

We set :
\begin{equation}
\begin{split}
\ttt_j & = \inf\{\tu_j \le n\le \tu_{j+1} : S_n\ge S_{\tu_j}+\gG\}\\
\ts_j & = \sup\{\tu_j \le n\le \tu_{j+1} : S_n\le S_{\tu_{j+1}}-\gG\}\,.
\end{split}
\end{equation}
Note that $\ttt_j$ and $\ts_j$ both belong to $\{\tu_j^+, \dots, \tu_{j+1}\}$. We point out that they are a priori not ordered:  we may have either $\ttt_j\le\ts_j$ or $\ts_j < \ttt_j$.

By construction we have that $\ls_{\tu_j}=-\gG$ and, therefore, that
\begin{equation}
\ls_n=\begin{cases} -\gG +2\left(S_n-S_{\tu_j}\right)& \text{ for } \tu_j\le n< \ttt_j\\
\gG-2 \max_{\tu_j \le i \le n} \left(S_i-S_n\right) & \text{ for } \ttt_j\le n \le \tu_{j+1}
\end{cases}
\end{equation}
By exploiting the time reversal properties (see Remark \ref{rem:time-reversal}), we get the similar result for $\rs$:
\begin{equation}
\rs_n=
\begin{cases} 
-\gG + 2\left(S_{\tu_{j+1}}-S_{n-1}\right)& \text{ for } \ts_j< n\le \tu_{j+1}+1\\
\gG-2 \max_{n-1 \le i \le \tu_{j+1} } \left(S_{n-1} - S_i\right) & \text{ for } \tu_j\le n \le \ts_j
\end{cases}
\end{equation}

Using these expressions, we can now compute $\ms_n$. 
We distinguish between two cases. 
If $\ttt_j\le\ts_j$, then
\begin{equation}
\label{eq:<case}
\ms_n= \ls_{n-1}+2h_n+\rs_{n+1}=
\begin{cases} 
2\min_{n\le i\le \tu_{j+1}} \left(S_i - S_{\tu_j}\right)  
& \text{ for } \tu_j < n< \ttt_j \, , \\
2\gG - 2\max_{\tu_j\le i\le n \le j \le \tu_{j+1}} \left(S_i-S_j\right) 
& \text{ for } \ttt_j \le n \le \ts_j \, , \\
2 \max_{\tu_j\le i\le n} \left(S_{\tu_{j+1}}-S_i \right) 
& \text{ for } \ts_j < n \le \tu_{j+1} \, .
\end{cases}
\end{equation}
If $\ts_j<\ttt_j$ instead
\begin{equation}
\label{eq:>case}
\ms_n= \ls_{n-1}+2h_n+\rs_{n+1}=
\begin{cases} 
2\min_{n\le i\le \tu_{j+1}} \left(S_i - S_{\tu_j}\right)  
& \text{ for } \tu_j < n \le \ts_j \, , \\
-2\gG + 2\left(S_{\tu_{j+1}}-S_{\tu_j}\right)  
& \text{ for } \ts_j < n < \ttt_j \, , \\
2 \max_{\tu_j\le i\le n} \left( S_{\tu_{j+1}}-S_i \right) 
& \text{ for } \ttt_j < n \le \tu_{j+1} \, .
\end{cases}
\end{equation}

Using these explicit expressions for $\ms_n$, we can complete the proof. In fact one readily checks that
\begin{itemize}
\item if $\tu_j< n \le \tu_j^+$, then $\ms_n=0$;
\item if $S_{\tu_{j+1}}-S_{\tu_j}>\gG$, then we have $\ms_n>0$ when $\tu_j^+<n\le \tu_{j+1}$, using the fact that $\max_{\tu_j\le i \le j \le \tu_{j+1}} \left(S_i-S_j\right) < \gG$;
\item if $S_{\tu_{j+1}}-S_{\tu_j}=\gG$, then we have $\ts_j=\tu_j^+$ and $\ttt_j=\tu_{j +1}$ and we derive $\ms_n=0$ for all $n$ such that $\tu_j^+<n\le \tu_{j+1}$.
\end{itemize}

To cover $\gG$-descending stretches, it suffices to replace $S$ by $-S$. 
\end{proof}

\section{Proofs for the reflected random walk: zero-temperature approach.}\label{sec:appendzerotemp}

In this appendix we investigate the link between the simplified processes $\ls$ and $\rs$ and the configurations that maximise the Hamiltonian $H$. 
The zero-temperature approach that we will use gives alternative proofs of Propositions \ref{th:explicit-hat} and \ref{th:lr-Fisher}.  For the sake of conciseness, we leave some  details of the proofs to the reader.

\subsection{Finite-volume system}

We start by working on the Random Field Ising Chain on a finite segment $[\![\ell, \err]\!]$, see \eqref{eq:defZellerr} and \eqref{eq:defHellerr}.

Let us write $\left(\ls_{\ell, n}^{\,\,(a)}\right)_{n\ge \ell-1}$ for the process defined in \eqref{eq:MCDShat} with initial value $\ls_{\ell,\ell-1}^{\,\,(a)}= a \gG$, where $a\in\{-1, +1\}$. We claim that
\begin{equation} \label{eq:explicitls}
\ls_{\ell, n}^{\,\,(a)} = \max_{\sigma\in\{-1, +1\}^{[\![\ell, n]\!]}} H_{\ell, n, J, h}^{a+}(\sigma) -  \max_{\sigma\in\{-1, +1\}^{[\![\ell , n]\!]}} H_{\ell, n, J, h}^{a-}(\sigma).
\end{equation}

We sketch two ways to show identity \eqref{eq:explicitls}. 
The first is simply to check by hand that the right-hand side follows the recursion defining  $\left(\ls_{\ell, n}^{\,\,(a)}\right)_{n\ge \ell-1}$. 
The second method is more interesting and exploits the zero-temperature approach. 
We consider the partition function defined in \eqref{eq:Gibbs} but we choose to multiply the Hamiltonian $H$ by a parameter $\beta>0$, or equivalently we decide to multiply $J$ and $h$ by $\beta$. Denoting 
 $l_{\ell, n}^{(a), \beta}= \frac{1}{\beta} \log\left(\frac{Z_{\ell, n, \beta J, \beta h}^{a+}}{Z_{\ell, n,  \beta J, \beta h}^{a-}}\right)$, our analysis in Section \ref{sec:tm-mainproof} yields that
$(l_{\ell, n}^{(a), \beta})_{n\ge \ell-1}$ evolves in $(-\gG, \gG)$, with function $f^\gb_h=b_\gG^\gb \circ \theta_h$ (recall \eqref{eq:f_h}), where $b_\gG^\gb$ is defined by:
\begin{equation}
b_\gG^\gb(x)=x+\frac1\gb \log\left(\frac{1+e^{-\gb(\gG+x)}}{1+e^{-\gb(\gG-x)}}\right), \qquad x\in \bbR\, .
\end{equation}
We observe that the functions $b_\gG^\gb$ are 1-Lipschitz and converge pointwise, as $\beta$ goes to infinity, towards $\widehat b_\gG$. Consequently, for fixed $n$ (and fixed disorder sequence $h$),  $l_{\ell, n}^{(a), \beta} \to_{\beta\to\infty} \ls_n^{\,\,(a)}$.
On the other hand, it is clear that $\ls_{\ell, n}^{\,\,(a), \beta}= \frac{1}{\beta} \log\left(\frac{Z_{\ell, n, \beta J, \beta h}^{a+}}{Z_{\ell, n, \beta J, \beta h}^{a-}}\right)$ converges towards the right-hand side in \eqref{eq:explicitls}.

Similarly, for the backward process $(\rs^{\,\,(b)}_{n , \err})_{n\le \err+1}$ defined by $\rs^{\,\,(b)}_{\err+1 , \err}= b \gG$, where $b\in\{-1,+1\}$ and $\rs_{n,\err}^{\,\,(b)}= \widehat{b}_\gG(\rs_{n+1, \err}^{\,\,(b)} + 2 h_n)$ for $n\le \err$, we have:
\begin{equation} \label{eq:explicitrs}
\rs^{\,\,(b)}_{n , \err} = \max_{\sigma\in\{-1, +1\}^{[\![ n, \err]\!]}} H_{ n,\err, J, h}^{+b}(\sigma) -  \max_{\sigma\in\{-1, +1\}^{[\![ n, \err]\!]}} H_{ n, \err, J, h}^{-b}(\sigma).
\end{equation}
\medskip
Let us now investigate on the sign of $\ls_{\ell, n-1}^{\,\,(a)}+2 h_n + \rs_{n+1, \err}^{\,\,(b)}$. 
Putting \eqref{eq:explicitls} and \eqref{eq:explicitrs} together, we get that, for $\ell \le n \le \err$:
\[\ls_{\ell, n-1}^{\,\,(a)}+2 h_n + \rs_{n+1, \err}^{\,\,(b)}=  \max_{\sigma\in\{-1, +1\}^{[\![\ell ,\err]\!]}: \sigma_n=+1} H_{\ell, \err, J, h}^{ab}(\sigma) -  \max_{\sigma\in\{-1, +1\}^{[\![\ell , \err]\!]}: \sigma_n=-1} H_{\ell,  \err, J, h}^{ab}(\sigma).\]
From the above formula, we retain that the sign of  $\ls_{\ell, n-1}^{\,\,(a)}+2 h_n + \rs_{n+1, \err}^{\,\,(b)}$ reflects which one of the two maxima appearing there is larger than the other. Hence the sign of $\ls_{\ell, n-1}^{\,\,(a)}+2 h_n + \rs_{n+1, \err}^{\,\,(b)}$ reflects the spin assigned to site $n$ by the maximsers of $H_{\ell, \err, J, h}^{ab}$. Precisely:
\begin{lemma}\label{lem:maximiserssign}
The quantity $ \ls_{\ell, n-1}^{\,\,(a)}+2 h_n + \rs_{n+1, \err}^{\,\, (b)}$ is
\begin{itemize}
\item positive if all the maximisers of $H_{\ell, \err, J, h}^{ab}$ satisfy $\sigma_n=+1$,
\item negative if all the maximisers of $H_{\ell, \err, J, h}^{ab}$ satisfy $\sigma_n=-1$,
\item zero otherwise (i.e., there is at least one maximiser satisfying $\sigma_n=+1$ and at least one maximiser satisfying $\sigma_n=-1$).
\end{itemize}
\end{lemma}

\begin{rem} 
The argument we just developed to identify the sign of $ \ls_{\ell, n-1}^{\,\,(a)}+2 h_n + \rs_{n+1, \err}^{\,\, (b)}$ may be understood in a more transparent way  
if  the law of $h$ has no atom. In this case for every fixed $\gG$, almost surely, $\ls_{\ell, n-1}^{\,\,(a)}+2 h_n + \rs_{n+1, \err}^{\,\,(b)}$ is nonzero (the three terms being independent). Furthermore, for every fixed $\gG>0$, almost surely, the configurations on $[\![\ell, \err]\!]$ take all different values by $H_{\ell, \err, J, h}^{ab}$, which has therefore exactly one maximiser.
Hence the above result can be reformulated as: $\sign( \ls_{\ell, n-1}^{\,\,(a)}+2 h_n + \rs_{n+1, \err}^{\,\,(b)})=\sigma_n$, where  $\sigma$  is the unique maximiser of $H_{\ell, \err, J, h}^{ab}$.

In order to illustrate once again the zero-temperature approach, we give another proof of this fact, under the assumptions that $\ls_{\ell, n-1}^{\,\,(a)}+2 h_n + \rs_{n+1, \err}^{\,\,(b)}$ is nonzero and $H_{\ell, \err, J, h}^{ab}$ has exactly one maximiser.

Let us consider the Gibbs measure $\bP_{\ell, \err, \beta J, \beta h}^{ab}$ which corresponds to the Gibbs measure defined in \eqref{eq:Gibbs} but with the Hamiltonian $H$ multiplied by $\beta$. 
Then, using the process $(l_{\ell, n}^{(a), \beta})_{n\ge \ell-1}$ introduced above, as well as the corresponding process $(r_{n ,\err}^{\,\,(b), \beta})_{n\le \err+1}$, \eqref{eq:Psigmaj} gives for $\ell\le n\le \err$:
\begin{equation}
\label{eq:Psigmajbeta}
\bP_{\ell, \err, \beta J, \beta h}^{ab}\left( \gs_n=+1\right)\, =\, 
\frac 1 {1+ \exp\left(-\gb\left(l_{\ell,n-1}^{(a),\gb} +2h_n+ r_{n+1, \err}^{\,\,(b),\gb}\right)\right)}\, .
\end{equation}
On the one hand, the measure  $\bP_{\ell, \err, \beta J, \beta h}^{ab}$ weakly converges towards the Dirac measure associated to the maximiser of $H_{\ell, \err, J, h}^{ab}$. 
Hence, $\bP_{\ell, \err, \beta J, \beta h}^{ab}\left( \gs_n=+1\right)$ converges as $\beta\to\infty$
\begin{itemize}
\item towards 1 if the maximiser satisfies $\sigma_n=+1$,
\item  towards 0 if the maximiser satisfies $\sigma_n=-1$.
\end{itemize}
On the other hand, using as before the pointwise convergence of $b_{\gG}^{\beta}$ towards $\widehat{b}_\gG$, 
it appears that $l_{\ell,n-1}^{(a),\gb} +2h_n+ r_{n+1, \err}^{\,\,(b),\gb}$ converges towards $\ls_{\ell, n-1}^{\,\,(a)}+2 h_n + \rs_{n+1, \err}^{\,\,(b)}$ as $\beta\to \infty$. 
If this limit is positive (respectively negative), then we get that $\bP_{\ell, \err, \beta J, \beta h}^{ab}\left( \gs_n=+1\right)$ converges, as $\beta\to\infty$, towards 1 (respectively 0). 
\end{rem}

In view of Lemma \ref{lem:maximiserssign}, we are naturally interested in describing the configurations which maximise $H_{\ell, \err, J, h}^{ab}$. To do so, let us introduce the following notion. 
A site $\tu\in[\![ \ell, \err ]\!]$ is said to be a $\gG$-maximum of $S=(S_n)_{n\in\bbZ}$ over $[\![ \ell, \err ]\!]$ with boundary conditions $(a,b)$ 
if and only if there exist $\ts, \ttt\in[\![\ell-1, \err]\!]$ such that
\begin{itemize}
\item $\ts\le \tu\le \ttt$, 
\item $S$ is maximal at site $\tu$ on the interval $[\![\ts, \ttt]\!]$,
\item $S_{\ts}\le S_{\tu}-\gG$ or ($\ts=\ell-1, a=+1$),
\item and $S_{\ttt}\le S_{\tu}-\gG$ or ($\ttt=\err$ and $b=+1$). 
\end{itemize}
The notion of $\gG$-minimum is defined similarly: $\tu\in[\![ \ell, \err ]\!]$  is a $\gG$-minimum for $S$ with boundary conditions $(a, b)$ if and only if it is a $\gG$-maximum for  $-S$ with boundary conditions $(-a, -b)$.
Observe that the left-most (respectively right-most) $\gG$-extremum is a $\gG$-maximum if $a=+1$ (respectively if $=+1$) and a $\gG$-minimum otherwise.

We say that two $\gG$-extrema are adjacent if they are of the same type and not separated by opposite $\gG$-extrema, and we denote by $\tU_1(\gG), \tU_2(\gG), \dots, \tU_k(\gG)$ the maximal sets of adjacent $\gG$-extrema, ordered using the usual order on $\Z$ (note that the parity of $k$ is determined by the boundary conditions $(a, b)$). We further denote $\tu_j(\gG)=\min \tU_j(\gG)$ and $\tu_j^+(\gG)=\max \tU_j(\gG)$. In these notations, we dropped the dependence in $\ell, \err, a$ and $b$, but it should not be forgotten. We now claim the following.

\begin{lemma}\label{lem:descrmaxim}
The configurations that maximise $H_{\ell, \err, J, h}^{ab}$ are exactly the following:
\begin{enumerate}[leftmargin=0.7 cm]
\item the configurations that are obtained by picking one element in each $\tU_j(\gG)$ and then setting +1 in ascending stretches (from a minimum to a maximum), -1 in descending stretches (from a maximum to a minimum),
\item the configurations obtained from those in (1) by removing stretches which have height exactly $\gG$: removing a stretch means switching the spins to opposite sign, the consequence is to merge three neighbouring spin domains in one. 
\end{enumerate}
\end{lemma}
\begin{proof}
We note that all those configurations have the same image by $H_{\ell, \err, J, h}^{ab}$. To show that they are maximisers, it is enough to show that any configuration which is not as described can be modified in order to increase its image by $H_{\ell, \err, J, h}^{ab}$. The details are left to the reader.
\end{proof}

Using Lemmas \ref{lem:maximiserssign} and \ref{lem:descrmaxim}, we conclude that $\ls_{\ell, n-1}^{\,\,(a)}+2 h_n + \rs_{n+1, \err}^{\,\,(b)}$ is
\begin{itemize}[leftmargin=0.5 cm]
\item positive on ascending stretches having height strictly larger than $\gG$: precisely on $[\![ \ell, \tu_1(\gG)]\!]$ if $a=+1$, on $[\![\tu_j^+(\gG)+1, \tu_{j+1}(\gG)]\!]$ for $j$'s such that $\tu_{j+1}(\gG)$ is a $\gG$-maximum and $S_{\tu_{j+1}(\gG)}-S_{\tu_j^+(\gG)}>\gG$, and, if $b=+1$, denoting by $\tu_k^+(\gG)$ the right-most $\gG$-extremum, on $[\![ \tu_k^+(\gG), \err]\!]$, 
\item negative on descending stretches having height strictly larger than $\gG$ (correspondingly) 
\item zero between adjacent $\gG$-extrema, i.e. on $[\![\tu_j(\gG)+1, \tu_j^+(\gG)]\!]$ for all $j$'s, and also on stretches of heigth exactly $\gG$, i.e. on $[\![\tu_j^+(\gG)+1, \tu_{j+1}(\gG)]\!]$ for all $j$'s such that $| S_{\tu_{j+1}(\gG)}-S_{\tu_j^+(\gG)}|=\gG $. 
\end{itemize}

\subsection{Infinite-volume system}

Our aim is now to understand what happens in the limit $\ell\to -\infty$ and $\err\to \infty$.

We first stress the fact that the notion of $\gG$-extremum over a segment is local, in the following sense. If $\tu$ is a $\gG$-maximum over $[\![\ell, \err]\!]$ which does not belong to $\tU_1(\gG)$, the left-most set of adjacent $\gG$-extrema, neither to $\tU_k(\gG)$, the right-most set of adjacent $\gG$-extrema, then it is also a $\gG$-maximum over any segment $[\![\ell', \err']\!]$ containing $[\![\ell, \err]\!]$. 

Using this, we see that, for fixed $n\in \Z$, $\ls_{\ell, n}^{\,\,(a)}$ is equal to $\ls_n$ as soon as $\ell$ is small enough. 
Indeed, let us use \eqref{eq:explicitls}. The configurations that achieve the maxima that appear there differ near the right boundary, due to the differing boundary conditions + and -, but they coincide elsewhere (up to modifications that do not change the value of the Hamiltonian), hence the difference between the maxima is eventually constant.
 
\begin{rem}
This fact could also be proved using the analytic approach of Appendix \ref{sec:appendanalytic}. Indeed, the case $n=0$ can be obtained by adapting the proof of Proposition \ref{th:explicit-hat}: the proof holds regardless of the value of $\ls_{\tsdown(\gG)}$ (respectively $\ls_{\tsup(\gG)}$) so it yields the same expression for all the $\ls_{\ell, 0}^{\,\,(a)}=\ls_0$ with $\ell \le \tsdown(\gG)$ (respectively  $\ell \le \tsup(\gG)$). Extension to any $n$ is immediate. 

We further stress that our above arguments also give an alternative proof of Proposition \ref{th:explicit-hat}: since we have described explicitly the maximising configurations, we can actually derive the explicit formula for $\ls_0$ and this gives exactly \eqref{eq:explicit-hat-l}.
\end{rem}
The same holds of course for the backward process: for fixed $n\in\Z$, when $\err $ is large enough, $\rs^{\,\,(b)}_{n , \err}=\rs_n$.
Thus, for fixed $n$, when $\ell$ is small enough and $\err$ is large enough, for every boundary conditions $(a, b)$,
\[\ls_{\ell, n-1}^{\,\,(a)}+2 h_n + \rs_{n+1, \err}^{\,\,(b)}=  \ls_{ n-1}+2 h_n + \rs_{n+1}.\]

The final piece of the puzzle is the following consequence of the fact that the notion of $\gG$-extremum is local. Provided that $S$ be recurrent on both sides (which is almost surely the case), we have that for any fixed segment $[\![\ell, \err]\!]$, the $\gG$-extrema over $[\![\ell', \err']\!]$ are, in restriction to $[\![\ell, \err]\!]$, constant for small enough $\ell'$ and large enough $\err'$. We claim that the $\gG$-extrema obtained in this way are exactly the $\gG$-extrema over $\Z$ defined in Section \ref{sec:auxiliary}.
It allows us to conclude that the sign of $\ls_{ n-1}+2 h_n + \rs_{n+1}$ is exactly $s^{(F)}_n$ defined in \eqref{eq:sF}. We have thus given a second proof of Proposition \ref{th:lr-Fisher}.

\begin{rem}\label{rem-descrground}
For completeness, let us describe (without proof) the ground state configurations of the infinite volume RFIC, which we define now. Given two configurations $\sigma, \sigma'\in \{-1, +1 \}^\Z$ which differ only at finitely many sites, we define 
\[\Delta H_{J, h} (\sigma, \sigma'):= J\sum_{n\in \Z} \left( \sigma_{n-1} \sigma_n - \sigma'_{n-1} \sigma'_n \right) + \sum_{n\in \Z} h_n (\sigma_n -\sigma_n')\]
(this is well-defined since there are only finitely many nonzero terms in the sums).
A configuration $\sigma\in \{-1, +1\}^\Z$ is called a ground state configuration if $\Delta H_{J, h} (\sigma, \sigma')\ge 0$ for every configuration $\sigma'$ which differs from $\sigma$ at only finitely many sites.
We state (without proof) that the ground state configurations are exactly the following:
\begin{enumerate}[leftmargin=0.7 cm]
\item the configurations that are obtained by picking one element in each $U_j(\gG)$ and then setting +1 in ascending stretches (from a minimum to a maximum), -1 in descending stretches (from a maximum to a minimum),
\item the configurations obtained from the ones in (1) by removing stretches which have height exactly $\gG$: removing a stretch means switching the spins to opposite sign, the consequence is to merge three neighbouring spin domains in one. 
\end{enumerate}
Proposition \ref{th:lr-Fisher} can be interpreted as follows: the sign of $\ls_{ n-1}+2 h_n + \rs_{n+1}$ reflects the spin given to site $n$ by the ground state configurations.
\end{rem}

\section{Scaling to the Neveu-Pitman process}
\label{sec:scaling}

This section relies on the Donsker Invariance Principle for one dimensional centered random walks with finite variance increments.
This  is often established in $C^0([0, 1]; \bbR)$, but for us it is more practical to work in
$C^0([0, \infty); \bbR)$ \cite[Sec.~1.3]{cf:StroockVaradhan}: Donsker Invariance Principle in $C^0([0, \infty); \bbR)$ is given as an exercise in 
\cite[Ex.~2.4.2]{cf:StroockVaradhan} or it follows by applying the general result \cite[Th.~11.2.3]{cf:StroockVaradhan}. Here is the statement we are going to use: set for $t\ge 0$ such that $ t\gG^2/\vartheta^2$ is integer 
\begin{equation}
\label{eq:BgGt}
B^{(\gG)}_t:= \, \frac 1 \gG S_{t \,\gG^2 / \vartheta^2}\,,
\end{equation}
and extend this definition to every $t\ge 0$ by affine interpolation, so $B^{(\gG)}$ is a stochastic process with  trajectories in $C^0([0, \infty); \bbR)$, like the standard Brownian motion $B$.
Donsker Invariance Principle states, with standard notation \cite{cf:Bill}, that $B^{(\gG)} \Rightarrow B$ as $\gG \to \infty$. 

\medskip

\begin{proof}[Proof of Proposition~\ref{th:scaling}]
We start by observing that Lemma~\ref{th:ren-u} directly implies that the sequence in \eqref{eq:ren-struc}
is a sequence of independent random vectors and that the odd and even subsequences are IID. We are therefore left with proving the convergence part of the statement, i.e. \eqref{eq:scaling}. For this we introduce the  continuum analog of the notation of Section~\ref{sec:Fisher}. 
We therefore consider the set $C^0_\infty$ of  functions  
$b_\cdot \in C^0([0, \infty); \bbR)$ such that  $\limsup_{t\to \infty} b_t=\infty$ and  $\liminf_{t\to \infty} b_t=-\infty$: note that $C^0_\infty$   is open 
in $C^0([0, \infty); \bbR)$.
We introduce the first time of first (unit) decrease for $b\in C^0_\infty$ as
\begin{equation}
t_1(b) \, :=\, \min\left\{t\ge 0:\, \text{there exists } s \in [0,t] \text{ such that } b_s-b_t=1\right\},
\end{equation}
and in turn the time of the first absolute maximum in $[0, t_1(b) ]$:
\begin{equation}
u_1(b)\, :=\, \min\left \{t \in  [0, t_1(b) ]:\, b_t= \max_{s \in [0, t_1(b) ]} b_s \right\}.
\end{equation}
Then we keep going by looking for the first time of unit increase after $t_1(b)$: 
\begin{equation}
t_2(b) \, :=\, \min\left\{t\ge t_1(b):\, \text{there exists } s \in [t_1(b),t] \text{ such that } b_t-b_s=1\right\},
\end{equation}
and the first absolute minimum in $[t_1(b), t_2(b)]$
\begin{equation}
u_2(b)\, :=\, \min\left \{t \in  [t_1(b), t_2(b)]:\, b_t= \min_{s \in [t_1(b), t_2(b)]} b_s \right\}.
\end{equation}
And then one iterates this construction generating the  infinite sequence of $(t_k(b))_{k \in \bbN}$ 
of alternating unit decrease/increase
and, above all,  the  infinite sequence  $(u_k(b))_{k \in \bbN}$ of $1$-extrema for the function $b$.

A first key observation is that, with 
$B^{(\gG)}$ the continuous trajectory built from the random walk trajectory $S$ in \eqref{eq:BgGt}, we have
\begin{equation}
u_k \left(B^{(\gG)}\right) \, =\, \frac{\vartheta^2}{\gG^2} \mathtt{u}_k(\gG)
\ \ \text{ and } \ \ 
B^{(\gG)}_{u_k \left( B^{(\gG)} \right)}\, =\, 
\frac 1 \gG S_{\mathtt{u}_k(\gG)}\, .
\end{equation}

Now we remark that, for every $k$ the function $u_k: C^0_\infty \to [0, \infty)$ is continuous in a neighborhood of $b$
if for every $j=1,\ldots, k-1$ we have $\vert b_{u_{j+1}(b)}- b_{u_j (b)}\vert >1$ and at $u_k(b)$ there is a unique 
absolute maximum (if $k$ is odd) or minimum    (if $k$ is even) for $b$ on the interval $[u_{k-1}(b), u_{k+1}(b)]$. 
This is an event that happens with probability one if $b$ is a trajectory of a standard Brownian motion. From this 
we directly infer an analog explicit condition on the trajectory $b$ in order to have the continuity of 
\begin{equation}
b \mapsto 
 \left( u_{j+1}(b)-u_j(b), \left \vert b_{u_{j+1}(b)}-b_{u_{j}(b)}\right \vert  \right)_{j=1,2, \ldots, n},
\end{equation}
in the neighborhood of a given function $b$
and check that these properties are almost surely verified if $b$ is a Brownian trajectory. 
Therefore by  Donsker Invariance Principle and by the Mapping Theorem \cite[Th.~2.7, Ch.~1]{cf:Bill}
we obtain the convergence in law as $\gG\to \infty$   of the $2n$ dimensional random vector
\begin{equation}
\label{eq:1hdu64}
 \left( \frac{\vartheta^2}{ \gG ^2}\left(\tu_{j+1}(\gG) - \tu_j(\gG)\right), \frac{\left \vert S_{\tu_{j+1}(\gG)} - S_{\tu_j(\gG)}\right\vert }{ \gG }\right)_{j=1, \ldots, n}
 \,,
\end{equation}
to 
\begin{equation}
\label{eq:1hdu64-1}
 \left(u_{j+1}(B)-u_j(B), \vert B_{u_{j+1}(B)}-B_{u_{j}(B)}\vert\right)_{j=1, \ldots, n}\,.
\end{equation}
This of course implies  
\eqref{eq:scaling}, so the proof of Proposition~\ref{th:scaling} is complete.  It is worth remarking  that,  given the independence and IID properties discussed at the beginning of the proof, 
 it suffices  to establish the convergence for the first two marginals of the sequence in  \eqref{eq:1hdu64}: in particular it suffices  to establish the convergence 
 of  \eqref{eq:1hdu64} to
\eqref{eq:1hdu64-1} for $n=2$. 
\end{proof}

\section{About the Fisher RG}
\label{sec:RG}

For conciseness in this section we assume that the law of $h_1$ has no atoms. In particular this implies that 
$\bbP(\sum_{k\in \bbN}\ind_{\{h_k=0\}}=0 )=1$ and 
$\bbP( \tu_j(\gG) =\tuplus_j(\gG)$ for every $j)=1$.
Set $\tau_0=0$ and, for $j \in  \bbN$, $\tau_{j}:=\inf\{k> \tau_{j-1}:\, \sign(h_k) \neq \sign (h_{k+1})\}$, where $\sign(0):=+1$. 
Then we build the sequence of $\bbN \times \bbR$ random variables 
\begin{equation}
\label{eq:alt-signs}
\left( \left( \tau_{j}-\tau_{j-1}, S_{\tau_{j}}-S_{\tau_{j-1}}\right)\right)_{j\in \bbN}\,.
\end{equation}
In fact $(S_{\tau_{j}}-S_{\tau_{j-1}})_j$ is an alternating sign sequence.

\begin{rem}
 Note that 
if we consider instead 
\begin{equation}
\label{eq:non-alt-signs}
\left( \left( \tau_{j}-\tau_{j-1}, \left \vert S_{\tau_{j}}-S_{\tau_{j-1}}\right\vert\right)\right)_{j\in \bbN}\,.
\end{equation}
then, 
conditionally on $h_1\ge 0$,  \eqref{eq:non-alt-signs} is a sequence of independent random variables 
and the subsequence with $j \in 2 \bbN -1$ (respectively $j \in 2 \bbN$)  is an IID  sequence with marginal law that coincides with the distribution 
of $(T,S^+_T)$, with $T$ a geometric random variable with parameter $p=\bbP(h_1\ge 0)$ and $S_j^+$ the sum of 
$j$ IID random variables distributed like $h_1$ conditioned to $h_1\ge 0$ (respectively,  
 is an IID  sequence with marginal law that coincides with the distribution 
of $(T,S^-_T)$, with $T$ a geometric random variable with parameter $1-p$ and $S_j^-$ the sum of 
$j$ IID random variables distributed like $h_1$ conditioned to $h_1<0$). The analogous statement holds conditionally on $h_1< 0$ and we remark also that \eqref{eq:non-alt-signs} is an IID sequence if the law of $h_1$ is symmetric.
\end{rem}

For every $N\ge 3$ and every realization of $(h_k)$ let us call $\underline{\gD}:= \min_{j\in\{2, \ldots, N-1\}} \vert S_{\tau_{j}}-S_{\tau_{j-1}}\vert$
and then call $\textsc{j}$ the smallest value in $j\in \{2, \ldots, N-1\}$ such that $\vert S_{\tau_{j}}-S_{\tau_{j-1}}\vert=\underline{\gD}$. 
We then  define the RG transformation $R$ that sends 
\begin{equation}
\left( \left( \tau_{j}-\tau_{j-1}, S_{\tau_{j}}-S_{\tau_{j-1}}\right)\right)_{j=1, \ldots, N}\,  =: \, 
\left( \left( \eta_j,  \gD_j \right)\right)_{j=1, \ldots, N}
\end{equation}
to a new sequence $\left( \left( \eta'_j,  \gD'_j \right)\right)_{j=1, \ldots, N-2}$
defined by 
\begin{enumerate}[leftmargin=0.7 cm]
\item $ \left( \eta'_j,  \gD'_j \right)=  \left( \eta_j,  \gD_j \right)$ for every $j\in \bbN$ such that $ j \le  \textsc{j}-2$ (such a $j$ exists unless $\textsc{j}=2$);
\item $ \left( \eta'_{j-2},  \gD'_{j-2} \right)=  \left( \eta_j,  \gD_j \right)$ for every $j\le N$ such that $ j \ge \textsc{j}+2$ (such a $j$ exists unless $\textsc{j}=N-1$);
\item $ \left( \eta'_{\textsc{j}-1}, \gD'_{\textsc{j}-1} \right)=  \left( \eta_{\textsc{j}-1}+ \eta_{\textsc{j}}+ \eta_{\textsc{j}+1},  \gD_{\textsc{j}-1}+ \gD_{\textsc{j}}+ \gD_{\textsc{j}+1}\right)$.
\end{enumerate}

This procedure can be iterated till the length of the sequence is $3$ or $4$, resulting on a final length respectively of length $1$ or $2$, see Fig.~\ref{fig:RGfig} . Note that under $R$ the value of $\underline{\gD}$
does not decrease and it increases a.s.. For every $\gG$ we keep applying $R$ 
 till $N\ge 3$ and $\underline{\gD}< \gG$: this includes the case in which $R$ cannot be applied even once. This way we define 
 $N_\gG$ and a finite sequence 
 \begin{equation}
\label{eq:alt-signs-Gamma}
\left( \left(\eta_{\gG,j}, \gD_{\gG,j}\right)\right)_{j=1, \ldots,N_\gG }\,.
\end{equation}
which has the property that $\left(\gD_{\gG,j}\right)_{j=1, \ldots,N_\gG }$ has alternating signs and the absolute value of each entry is $\gG$ or more.

We introduce also $j(N):= \sup\{j\in \bbN: \, \tu_j(\gG) \le N\}$, with $j(N)=0$ if the set is empty.

\medskip

\begin{proposition}
\label{th:RG}
Almost surely in the realization of $(h_k)_{k\in \bbN}$ we have that for every  
$N\ge 3$ we have that $j(N) \le N_\gG \le j(N)+3$ and
\begin{equation}
\label{eq:RGinth}
\left(\tu_j(\gG)\right)_{j=1, \ldots,j(N)} \subset \left(\sum_{k=1}^j \eta_{\gG,j} \right)_{j=1,\ldots,  N_\gG}\, .
\end{equation}
In particular, if $j(N)=N_\gG$ the inclusion in 
\eqref{eq:RGinth} is an equality. Otherwise 
the only elements in the right-hand side  in \eqref{eq:RGinth} that may not belong to the left-hand side 
are $ \eta_{\gG,1}$,  $\sum_{k=1}^{N_\gG-1}  \eta_{\gG,j}$ and $\sum_{k=1}^{N_\gG} \eta_{\gG,j}$.
\end{proposition}

\medskip

Note that  $N_\gG$ is nondecreasing $N$ increases and a.s. 
$N_\gG$ tends to $\infty$ when $N$ is sent to $\infty$. So the RG procedure does capture all the $\gG$-extrema, and captures possibly 
(at most three) spurious points near the boundaries.

\medskip

\begin{figure}[h]
\begin{center}
\includegraphics[width=15 cm]{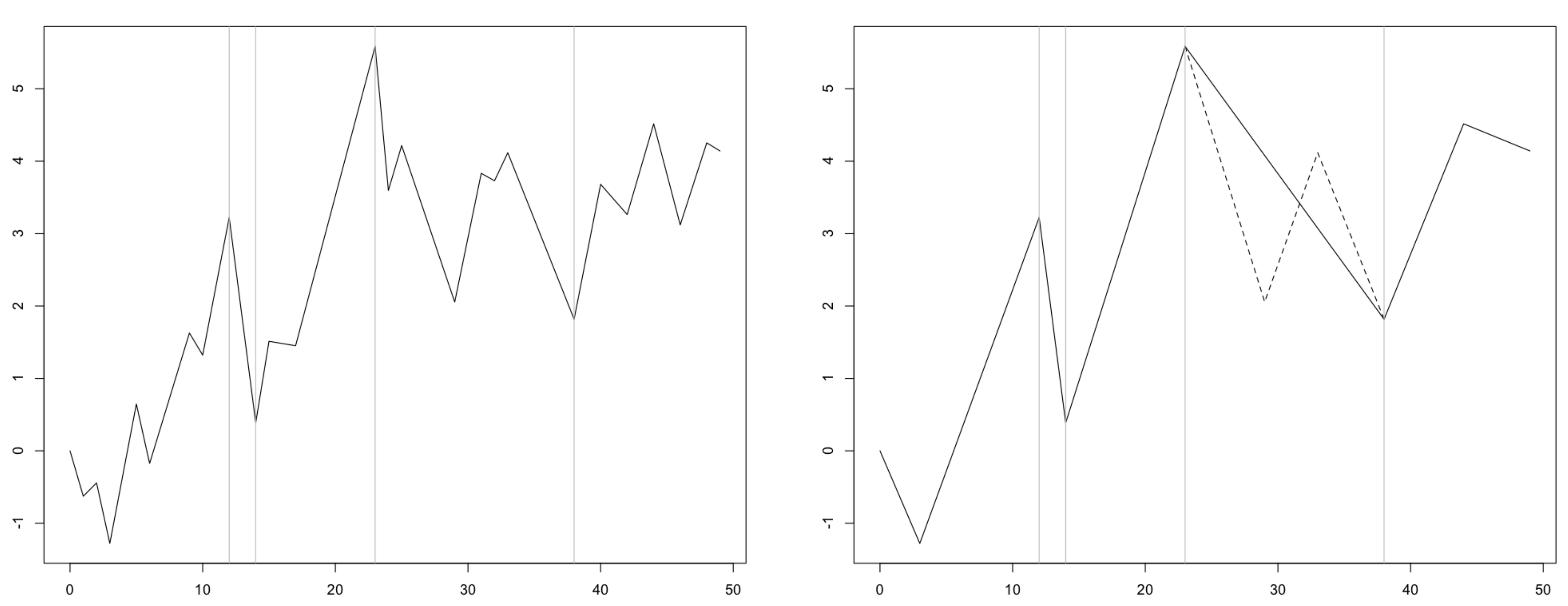}
\end{center}
\caption{\label{fig:RGfig} We apply the RG procedure $R$ with $\gG=2.5$ to the finite portion of random walk that corresponds to $N=50$ with the notations introduced for  Proposition~\ref{th:RG}. The image on the left is not the original random walk $S$ with space-time increments $((1,h_k))_{k=1,2, \ldots}$, but rather the coarse grained walk with alternating height signs 
with space-time increments given in \eqref{eq:alt-signs}. The $\gG$-extrema are at
12, 14, 23 and 38 and they are marked by grey vertical lines (the height differences between these 4 points are respectively $-2.835$,  $5.198$ and $-3.768$). In the image on the right there is final path:  the iteration has stopped because all the increments, except the first and the last one, are larger than $\gG$.  In the last image we show also the last RG step. Note that if we consider the bend points of the last trajectory, these bend points are the $\gG$-extrema plus 2 spurious boundary points. With the notations of the proof also the very last point is a bend point, so the spurious points are 3.
} 
\end{figure}

\begin{proof}
We start by remarking that $\left(\tu_j(\gG)\right)_{j=1, \ldots,j(N)} \subset (\tau_j))_{j=1, \ldots, N}$. 
By applying $R$ we just \emph{expel} two nearest neighbor points in $(\tau_j))_{j=1, \ldots, N}$
and, by definition of $\gG$-extrema and of $R$, this decimation procedure cannot involve
the location of a $\gG$-extremum. Next we remark that all the bonds   between two $\gG$-extrema $\tu_{j-1}(\gG)$ and $\tu_j(\gG)$, if more than one and they are always an odd number,  will eventually be decimated, because  otherwise we will have neighbor bonds both
with height steps of at least $\gG$and there would be a $\gG$-extremum in $(\tu_{j-1}(\gG),\tu_j(\gG))\cap \bbN$, which is impossible by construction. This in general cannot be said
for the intervals $\{1, \ldots,  \tu_1(\gG)\}$ and $\{\tu_{j(N)}, \ldots, \tau_N\}$. In the first case the problem is that 
the first increment cannot be affected by $R$ if the absolute value of the second increment is $\gG$ or larger, or if this happens at some point in the iteration. With $\{\tu_{j(N)}, \ldots, \tau_N\}$ the analogous problem is present with the last increment, but, added to that, there is also the problem that, unless the absolute value of the last height  increment is $\gG$ or more,  whether  
$\sum_{k=1}^{N_\gG-1} \eta_{\gG,j}$ is a $\gG$-extremum or not depends on $h_k$ for $k> \tau_N$ (that we haven't even sampled).
\end{proof}

\section*{Acknowledgments}
 Y.H. acknowledges the support of ANR, project LOCAL.  G.G.\ acknowledges the support of the Cariparo Foundation.


\begin{thebibliography}{99}





\bibitem{cf:AHP}
M.~Aizenman, M.~Harel and R.~Peled, 
\emph{Exponential decay of correlations in the 2D random field Ising model},
J. Stat. Phys.   {\bf 180} (2020), 304-331.

\bibitem{cf:AW} M. Aizenman and J. Wehr, \emph{Rounding effects of quenched randomness on first-order phase transitions}, Comm. Math. Phys. {\bf 130} (1990), 489-528.
 
 \bibitem{cf:Asm} S.~Asmussen, \emph{Applied Probability and Queues}, Second edition,  Springer-Verlag, New York, 2003.


\bibitem{cf:Bill} P. Billingsley, \emph{Convergence of Probability Measures}, John Wiley \& Sons, 2nd edition, 1999.


\bibitem{cf:Bov} A. Bovier, Statistical mechanics of disordered systems. A mathematical perspective. Cambridge University Press, 2006. 

\bibitem{cf:BF08}
 A.~Bovier and A.~Faggionato, \emph{Spectral analysis of Sinai's walk for small eigenvalues}, Ann. Probab. {\bf 36} (2008),  198-254. 
 





\bibitem{cf:COP0} M.~Cassandro, E.~Orlandi and P.~Picco, \emph{Typical configurations for one-dimensional random field
Kac model},  Ann. Probab. {\bf 27} (1999),  1414-1467.

\bibitem{cf:COP1} M.~Cassandro, E.~Orlandi and P.~Picco, 
\emph{Phase transition in the 1d random field Ising model with long range interaction},
Comm. Math. Phys. {\bf 288} (2009),  731-744.

\bibitem{cf:COP2} M.~Cassandro, E.~Orlandi and P.~Picco, 
 \emph{Typical Gibbs configurations for the 1d random field Ising model with long range interaction},
Comm. Math. Phys. {\bf 309} (2012),  229-253.


\bibitem{cf:Cheliotis}
D.~Cheliotis, \emph{Diffusion in random environment and the renewal theorem}, Ann. Probab. {\bf 33} (2005),  1760-1781.



\bibitem{cf:CGH1} O. Collin, G. Giacomin and Y. Hu, \emph{Infinite disorder renormalization fixed point for the continuum random field Ising chain},  Probab. Theory Related Fields   {\bf 190} (2024), 
881-939.

\bibitem{cf:Orphee} O. Collin, \emph{On the large interaction asymptotics of the free energy density of the Ising chain with disordered centered external field}, 
arXiv:2403.19626

\bibitem{cf:CGGH}  O. Collin, G. Giacomin, R. L. Greenblatt and Y. Hu,
\emph{On the Lyapunov exponent for the random field Ising transfer matrix, in the critical case}, arXiv:2501.08797








\bibitem{CPV}
A. Crisanti, G. Paladin and A. Vulpiani, 
\emph{Products of random matrices in statistical physics},  Springer Series in Solid-State Sciences {\bf 104}, 1993.






\bibitem{DM} C. Dasgupta and S.-K. Ma, Low-temperature properties of the random Heisenberg antiferromagnetic chain, Phys. Rev. B 22 (1980), 1305

\bibitem{cf:DMSDS-B}
J.~De Moor, C.~Sadel and H.~Schulz-Baldes, \emph{Scaling of the Lyapunov exponent at a balanced hyperbolic critical point}, 	arXiv:2411.11063


\bibitem{cf:DH83} Derrida and H.J. Hilhorst, \emph{Singular behaviour of certain infinite products of random $2\times2$ matrices},  J. Phys. A {\bf 16} (1983), 2641-2654.

\bibitem{cf:DingHuandMaia}
J.~Ding,  F.~Huang and J.~Maia, 
\emph{Phase transitions in low-dimensional long-range random field Ising models}, arXiv:2412.19281
	


\bibitem{cf:DingXia} J.~Ding and J.~Xia, 
\emph{Exponential decay of correlations in the two-dimensional random field Ising model},
Invent. Math.   {\bf 224} (2021),  999-1045.

\bibitem{cf:DingZhuang}
J.~Ding and Z.~Zhuang, 
\emph{Long range order for random field Ising and Potts models},
Comm. Pure Appl. Math.   {\bf 77} (2024),  37-51.





\bibitem{cf:MC} R. Douc, E. Moulines, P. Priouret and P. Soulier, \emph{Markov Chains}, Springer Series in Operation Research and Financial Engineering, Springer, 2018.


\bibitem{cf:ESZ} N.~Enriquez, C.~Sabot and O.~Zindy, 
\emph{Aging and quenched localization for one-dimensional random walks in random environment in the sub-ballistic regime},
Bull. Soc. Math. France {\bf 137} (2009),  423-452.


\bibitem{cf:F92} D. S. Fisher, \emph{Random transverse field Ising spin chains},  Phys. Rev. Lett. {\bf 69} (1992), 534-537.


\bibitem{cf:F95} D. S. Fisher, \emph{Critical behavior of random transverse-field Ising spin chains},  Phys. Rev. B  {\bf 51}  (1995), 6411-6461. 

\bibitem{cf:FLDM01}
 D.~S.~Fisher, P.~Le Doussal and C.~Monthus, \emph{Nonequilibrium dynamics of random field Ising spin chains: exact results via real space renormalization group}, Phys. Rev. E  {\bf 64} (2001),  066107, 41 pp..
 
 
 \bibitem{cf:GB} G. Giacomin, \emph{Disorder and critical phenomena through basic probability models}, 
  Lecture notes from the $40^{\mathrm{th}}$ Probability Summer School held in Saint-Flour, 2010, Lecture Notes in Mathematics {\bf 2025}, Springer, 2011.

 
 \bibitem{cf:GGG17} 
G. Genovese, G.Giacomin and R. L.  Greenblatt, \emph{Singular behavior of the leading Lyapunov exponent of a product of random $2 \times 2$ matrices}, Commun. Math. Phys.  {\bf 351} (2017), 923-958.


\bibitem{cf:GG22} G. Giacomin and R. L. Greenblatt, \emph{Lyapunov exponent for products of random Ising transfer matrices: the balanced disorder case}, ALEA {\bf 19} (2022), 701-728.

\bibitem{Goldie-Grubel} C.M. Goldie and R. Gr\"{u}bel, \emph{Perpetuities with thin tails}, Adv. Appl. Prob. {\bf 28} (1996), 463-480.


\bibitem{cf:IM} F. Igl\'oi and C. Monthus, \emph{Strong disorder RG approach of random systems}, Phys. Rep. {\bf 412} (2005), 277-431.






\bibitem{kersting-vatutin} G. Kersting and V. Vatutin, \emph{Discrete Time Branching Processes in Random Environment},  Volume 1, Wiley, 2017.

\bibitem{cf:LaLi} G.~F.~Lawler and V.~Limic, \emph{Random Walk: A Modern Introduction}, Cambridge Studies in Advanced Mathematics {\bf 123}, 2010.

\bibitem{cf:RWRE-phys} 
P.~Le Doussal, C.~Monthus and D.~S.~Fisher, 
\emph{Random walkers in one-dimensional random environments: exact renormalization group analysis}, 
Phys. Rev. E (3) {\bf 59} (1999),  4795-4840. 



\bibitem{cf:Luck-book} J.-M. Luck, \emph{Syst\`emes d\'esordonn\'es unidimensionnel}, Collection Al\'ea Saclay, 1992.


\bibitem{cf:MWbook}
B. M. McCoy  and T. T. Wu, 
\emph{The Two-Dimensional Ising Model}. Harvard University Press, 1973.


\bibitem{cf:NL86}
T. M. Nieuwenhuizen and J.-M. Luck, \emph{Exactly soluble random field Ising models in one dimension}, J. Phys. A, {\bf 19} (1986), 1207-1227. 
 


\bibitem{cf:NP89}
J.~Neveu and J.~Pitman,  \emph{Renewal property of the extrema and tree property of the excursion of a one-dimensional Brownian motion}, S\'eminaire de Probabilit\'es XXIII,  Lecture Notes in Math.  {\bf 1372} (1989), 239-247.

\bibitem{cf:OP2009} E.Orlandi and P.~Picco,
\emph{One-dimensional random field Kac's model: weak large deviations principle},
Electron. J. Probab. {\bf 14} (2009), 1372-1416. 



\bibitem{cf:Spitzer} 
F.~Spitzer, \emph{Principles of random walk}, second edition,
Graduate Texts in Mathematis {\bf 34}, 
Springer-Verlag,  1976. 

\bibitem{cf:StroockVaradhan}
D. Stroock and S. Varadhan,  
\emph{Multidimensional diffusion processes}.
Classics in Mathematics, Springer-Verlag, Berlin, 2006.

\bibitem{cf:Viana}
 M. Viana, \emph{Lectures on Lyapunov exponents},  Cambridge Studies in Advanced Mathematics, {\bf 145} Cambridge University Press,  2014.


\bibitem{cf:Vojta} T.~Vojta, \emph{Rare region effects at classical, quantum and nonequilibrium phase transitions}, J. Phys. A: Math. Gen. {\bf 39} (2006), R143-R205.


\end{thebibliography}
\end{document}